\theoremstyle{plain}
\newtheorem{theorem}{Theorem}[subsection]
\newtheorem*{proposition*}{Proposition}
\newtheorem{lemma}[theorem]{Lemma}
\newtheorem*{lemma*}{Lemma}
\newtheorem{corollary}[theorem]{Corollary}
\newtheorem*{corollary*}{Corollary}
\newtheorem*{theorem*}{Theorem}
\theoremstyle{definition}
\newtheorem*{remark*}{Remark}
\def\al{\alpha} 
\def\ga{\gamma} 
\def\ep{\varepsilon}
\def\th{\theta} 
\def\io{\iota} 
\def\ka{\kappa} 
\def\la{\lambda} 
\def\si{\sigma} 
\def\ta{\tau} 
\def\ph{\varphi}
\def\om{\omega} 
\def\Ga{\Gamma} 
\def\De{\Delta}
\def\Ph{\Phi} 
\def\Om{\Omega}
\def\o{\circ} 
\def\i{^{-1}} 
\def\x{\times}
\def\p{\partial}
\def\R{{\mathbb R}}
\def\exp{\operatorname{exp}}
\let\on=\operatorname
\let\wt=\widetilde
\let\ol=\overline
\newcommand{\ud}{\,\mathrm{d}}
\title[$R$-transforms for Sobolev $H^2$-metrics on spaces of plane curves] 
{$R$-transforms for Sobolev $H^2$-metrics on spaces of plane curves}
\author{Martin Bauer, Martins Bruveris, Peter W. Michor}
\address{
Martin Bauer, Peter W.\ Michor: Fakult\"at f\"ur Mathematik,
Universit\"at Wien, \newline
Oskar-Morgenstern-Platz 1, A-1090 Wien, Austria.\newline\indent
Martins Bruveris: Institut de math\'ematiques, EPFL, CH-1015 Lausanne, Switzerland}
\email{bauer.martin@univie.ac.at}
\email{martins.bruveris@epfl.ch}
\email{peter.michor@univie.ac.at}
\date{{\today} } 
\dedicatory{Dedicated to David Mumford on the occasion of his 76th birthday} 
\thanks{Martin Bauer was supported by `Fonds zur
F\"orderung der wissenschaftlichen                    
Forschung, Projekt P~24625'} 
\keywords{plane curves, geodesic equation, Sobolev $H^2$-metric, R-transform}
\subjclass[2010]{Primary 35Q31, 58B20, 58D05}
\begin{document}
\begin{abstract} 
We consider spaces of smooth immersed plane curves (modulo translations and/or rotations), equipped 
with reparameterization invariant weak Riemannian metrics involving second derivatives. This 
includes the full $H^2$-metric without zero order terms.  We find 
isometries (called $R$-transforms)
from some of these spaces into function spaces with simpler weak Riemannian metrics, and we use 
this to give explicit formulas for geodesics, geodesic distances, and sectional curvatures. 
We also show how to utilise the isometries to compute geodesics numerically.
\end{abstract}
\maketitle

\tableofcontents

\section{Introduction}

In this article we will study four different Sobolev $H^2$-type metrics on  the infinite dimensional 
manifold of parametrized curves in the plane, $\on{Imm}(S^1,\R^2)$. This space is of interest due to its connections to the field of mathematical shape analysis. Riemannian metrics are used in shape analysis, since they equip the space with a distance function that can be used for comparison or classification of objects; they also allow to locally linearize the space via the exponential map and thus to generalize linear statistical methods to these -- in general -- highly nonlinear spaces.

In applications to shape analysis one is mostly interested not in the curve 
itself, but only in the shape that it represents. Two curves represent the same shape, if they differ 
by a reparameterization or relabelling of the points.         
For this reason we will only be interested in metrics that are invariant under the action of the 
reparameterization group $\on{Diff}(S^1)$. 

The arguably simplest reparametrization invariant metric on $\on{Imm}(S^1,\R^2)$ is the $L^2$-metric
\[
G_c(h,k) = \int_{S^1} \langle h, k \rangle \ud s\,.
\]
Here $h,k\in T_c\on{Imm}(S^1,\R^2)$ are tangent vectors with foot point $c\in \on{Imm}(S^1,\R^2)$ 
and $\ud s$ 
denotes arc-length integration, i.e., $\ud s = |c'(\theta)| \ud \theta$. Unfortunately this metric is unsuitable for shape analysis, because the induced geodesic distance vanishes, i.e., any two curves can be joined by paths of arbitrary short length. This surprising result was proven first for the quotient space $\on{Imm}(S^1,\R^2)/\on{Diff}(S^1)$ in \cite{Michor2006c}; later it was generalized in \cite{Michor2005} to the space $\on{Imm}(M, N)/\on{Diff}(M)$ of type $M$ submanifolds of $N$ where $M$ is compact and $\on{dim} M \leq \on{dim}N$, as well as the diffeomorphism group $\on{Diff}(M)$; using a combination of both results it was shown in \cite{Bauer2012c} that the distance also vanishes on $\on{Imm}(M,N)$.
Note that this is a purely infinite dimensional phenomenon -- in finite 
dimensions  the geodesic distance is always positive,  due to  the local invertibility of the 
exponential map.         

The vanishing of the geodesic distance for the $L^2$-metric led to the search for stronger metrics, that would be suitable for shape analysis. Candidates, that have been considered, include the $L^2$-metric weighted by curvature \cite{Michor2006c}:
$$G^A_c(h,k)=\int_{S^1}(1+A\ka_c^2)\langle h,k\rangle\ud s\,,$$
or the length of the curve \cite{YezziMennucci2005,Shah2008}:
$$G^{\Ph}_c(h,k)=\Ph(\ell_c)\int_{S^1}\langle h,k\rangle\ud s\,.$$
Here $\ka_c$ denotes the curvature of the curve, $\ell_c$ its length  and $\Ph:\R\to\R_{>0}$ is a 
suitable positive function.  
These metrics have been generalized to higher dimensional immersions in 
\cite{Bauer2012a,Bauer2012b}.  

A different approach to strengthen the metric and the one, that we will use in this article is to add terms involving higher derivatives of the tangent vectors 
to the metric, leading to metrics of the form
\[
G_c(h,k) = \int_{S^1} \sum_{j=0}^{k} a_j \langle D_s^j h, D_s^j k \rangle \,ds \,,
\]
where $D_sh = \tfrac 1{|c'|} h'$ denotes the arc-length derivative of $h$ and $a_j$ are weights, 
possibly depending on the curve $c$.  
 More generally one can  consider metrics that are defined via a field of symmetric pseudo- differential operators $L_c: 
 T_c\on{Imm}(S^1,\R^2)\to T_c\on{Imm}(S^1,\R^2)$ by 
$$G_c(h,k) = \int_{S^1} \langle L_c h, k \rangle \,ds= \int_{S^1} \langle h, L_c k \rangle \,ds \,.$$
This approach leads to the class of Sobolev-type metrics, which were independently introduced 
in \cite{Charpiat2007, Michor2007, Sundaramoorthi2007} and studied further in 
\cite{Bauer2011b, Bauer2012d, Mennucci2008}.  
Often the operator field $L$ will have a kernel and thus $G^L$ will be a metric only a certain quotient
of $\on{Imm}(S^1,\R^2)$, e.g., if all constant 
vector fields  are in its kernel, then one has to pass to the quotient $\on{Imm}/\on{Tra}$ 
of plane parametrized curves modulo translations. 
An overview of the various metrics on $\on{Imm}(S^1,\R^2)$ can be found in \cite{Bauer2013b_preprint}.

While Sobolev-type metrics are a natural generalization of the $L^2$-metric, their numerical 
treatment is unfortunately rather involved. This stems mainly from the fact that the geodesic equation of a 
Sobolev-type metric of order $k$ is generally a highly nonlinear PDE of order $2k$. There are exceptions.  For the family 
of first order metrics on $\on{Imm}(S^1,\R^2)/\on{Tra}$ given by
\[
G^{a,b}_c (h,h)= \int_{S^1} a^2\langle D_sh,n\rangle^2+b^2 \langle D_sh,v\rangle^2\,ds\,,
\]
with $a, b > 0$ there exists an isometric transformation of the space $\on{Imm}(S^1,\R^2)/\on{Tra}$, called the $R$-transform, which tremendously simplifies the computation of geodesics; see  
\cite{Michor2008a, Sundaramoorthi2011,Klassen2004, Bauer2012b_preprint}.  
Apart from simplifying the computations, the representation via the $R$-transform also permits us to compute the curvature and in some special cases to obtain explicit formulas for geodesics.
 
There have been some attempts to solve the geodesic equation directly for order one metrics on curves \cite{Mio2007} and surfaces \cite{Bauer2011a}. Metrics of higher order on the other hand are still practically untouched. The only exception is \cite{Shah2010}, discussing the homogenous $H^2$-metric on the space of plane curves modulo similitudes. 
It is therefore of interest to develop representations of higher order metrics, that have the potential to simplify computations of geodesics.

In this article we continue the investigation started in \cite{Bauer2012b_preprint} and use similar methods to study four different $H^2$-type metrics, namely:
\begin{align*}
G_c(h,k)&=\int_{S^1} \ka^{-3/2}\langle D^2_sh,n\rangle\langle D^2_sk,n\rangle + 
\langle D_sh,v\rangle\langle D_sk,v\rangle\ud s\,,
\tag{\eqref{metric1} in Sect.~\ref{Sec:metric1}}
\\
G_c(h,k) &= \int_{S^1} \langle D_s h, v\rangle\langle D_s k, v\rangle  
+ \langle D_s^2 h, n \rangle \langle D_s^2 k, n \rangle \ud s\,,
\tag{\eqref{metric2} in Sect.~\ref{Sec:metric2}}
\\
G_c(h,k) &= \int_{S^1} \langle D_s h, D_s k \rangle  + \langle D_s^2 h, n \rangle \langle D_s^2 k, n \rangle \ud s\,,
\tag{\eqref{metric3} in Sect.~\ref{Sec:metric3}}
\\
G_c(h,k) &= \int_{S^1} \langle D_s h, D_s k \rangle  + \langle D_s^2 h, D_s^2 k\rangle \ud s\,.
\tag{\eqref{metric4} in Sect.~\ref{Sec:metric4}}
\end{align*}
Despite its seemingly complicated nature, metric \eqref{metric1} is completely amenable to the 
$R$-tranform treatment: For open curves it is flat and we get explicit formulas for geodesics and the geodesic 
distance. The image of the $R$-transform of the space of closed curves is a codimension 2 splitting  
submanifold of an open (with respect to a finer topology) set in a pre-Hilbert space. See Sect.
\ref{Sec:metric1:motivation} for an explanation of the form of this metric. 

For the metric \eqref{metric2} the image of the corresponding $R$-transform for open curves is 
$C^\infty([0,2\pi],(\mathbb R_{>0}\x \mathbb R,g))$ with a weak $L^2$-type metric; 
here $g$ is a curved metric on $\R_{>0} \x \R$, for which we manage to derive (somewhat) explicit formulas 
for geodesics.  The image of the space of closed curves is again a codimension 2 splitting 
submanifold.

For metric \eqref{metric3} the image of the space of open curves under the $R$-transform is 
splitting submanifold of infinite codimension in $C^\infty([0,2\pi], 
(\mathbb R_{>0}\x S^1\x \mathbb R,g))$ described by a system of ODEs. 

The picture for metric \eqref{metric4} is again more complicated but managable; we do not include 
full results in this paper. 

\section{Background material and notation}

In this paper we use convenient analysis in infinite dimensions as described in \cite{Michor1997}.

\subsection{Notation}\label{notation}
Let $M$ denote either $S^1$ or $[0,2\pi]$ and let $c:M \to \R^2$ be a regular curve, i.e., $c'(\th) \neq 0$. We denote the curve parameter by $\th \in M$ and 
differentiation by $'$, i.e., $c'=\p_\th c$. Since $c$ is an immersion, the unit-length 
tangent vector $v = c' / |c'|$ is well-defined. Denote by $J$ the rotation by $\tfrac \pi 2$. 
Rotating $v$ we obtain the unit-length normal vector      
$$n = \left(\begin{array}{cc}
0 & -1  \\
1 & 0
\end{array}\right)v=Jv\,.$$ 
We will denote by $D_s = \tfrac 1{|c'|} \p_\th$ the derivative with respect 
to arc-length and by $\ud s = |c'| \ud \th$ the integration with respect to arclength. To 
summarize, we have  
\begin{align*}
v &= D_s c\,, & n &= Jv\,, & D_s &= \frac 1{|c'|} \p_\th\,, & \ud s &= |c'| \ud \th\,.
\end{align*}
The curvature can be defined as 
\[
\ka = \langle D_s v, n \rangle\,,
\]
where $\langle\,,\,\rangle$ denotes the Euclidean inner product.
The orthonormal frame $(v,n)$ satisfies the Frenet equations,
\begin{align*}
D_s v &= \ka n \\
D_s n &= -\ka v\,.
\end{align*}

We define the {\it turning angle} $\al : M \to \mathbb R/2\pi \mathbb Z$ of a curve $c$ by 
$v(\th) = \exp(i \al(\th))=(\cos \al, \sin \al)$; 
we shall often treat $S^1$, $\mathbb R/2\pi\mathbb Z$, and the intervall $[0,2\pi]$ 
with endpoints identified, as the same space.

\subsection{The manifold of plane curves}\label{Background:plane curves}
The space of \emph{closed immersed curves}, 
\[
\on{Imm}(S^1, \R^2) = \left\{ c \in C^\infty(S^1, \R^2) \,:\, c'(\th) \neq 0 \right\}\,,
\]
is an open set in the manifold $C^\infty(S^1, \R^2)$ with respect to the $C^\infty$-topology and 
thus itself a smooth manifold. The tangent space of $\on{Imm}(S^1, \R^2)$ at the point $c$ consists 
of all vector fields along the curve $c$. It can be described as the space of sections of the 
pullback bundle $c^\ast T\R^2$,   \begin{equation*}
T_c \on{Imm}(S^1,\R^2) = \Ga(c^\ast T\R^2) =
\left\{h: \quad \begin{aligned}\xymatrix{
& T\R^2 \ar[d]^{\pi} \\
S^1 \ar[r]^c \ar[ur]^h & \R^2
} \end{aligned} \right\}\,.
\end{equation*}
Since the tangent bundle $T\R^2$ is trivial, we can identify $T_c \on{Imm}(S^1,\R^2)$ with the space 
of $\R^2$-valued functions on $S^1$, 
\[
T_c \on{Imm}(S^1, \R^2) \cong C^\infty(S^1, \R^2)\,.
\]
If we drop the periodicity condition we obtain the manifold of \emph{open immersed curves},
\[
\on{Imm}([0,2\pi], \R^2) = \left\{ c \in C^\infty([0,2\pi], \R^2) \,:\, c'(\th) \neq 0 \right\}\,.
\]
The tangent space of $\on{Imm}([0,2\pi], \R^2)$ is similar to that for closed curves, only with $S^1$ 
replaced by $[0,2\pi]$. Whenever we describe results that work for both open and closed curves 
we will write $\on{Imm}(M,\R^2)$, with $M$ standing for either $S^1$ or $[0, 2\pi]$.

All metrics in this article will be degenerate on $\on{Imm}(M,\R^2)$ -- their kernel will consist 
of translations or Euclidean motions. 
This leads us to consider the spaces $\on{Imm}(M,\R^2)/\on{Tra}$ and $\on{Imm}(M,\R^2)/\on{Mot}$. 
Here $\on{Tra}$ denotes the translation group on  
$\R^2$, i.e., $\on{Tra}\cong \R^2$ and $\on{Mot}$ denotes the group $\mathbb R^2\ltimes SO(2)$ of 
Euclidean motions. We will identify the quotients with subspaces of $\on{Imm}(M,\R^2)$ (sections 
for the left action of the translation group or the motion group) in the 
following way  
\begin{align*}
 \on{Imm}(M,\R^2)/\on{Tra}&\cong\{c\in \on{Imm}(M,\R^2):c(0)=0\} \\
 \on{Imm}(M,\R^2)/\on{Mot}&\cong\{c\in \on{Imm}(M,\R^2): c(0) = 0 \text{ and } \al(0) = 0\}\,.
\end{align*} 
The tangent spaces are then given by
\begin{align*}
T_c\left( \on{Imm}(M,\R^2)/\on{Tra}\right)&\cong\{h\in C^{\infty}(M,\R^2): h(0)=0\} \\
T_c\left( \on{Imm}(M,\R^2)/\on{Mot}\right)&\cong\{h\in C^{\infty}(M,\R^2): h(0)=0, \langle D_sh(0),n(0)\rangle=0\}\,.
\end{align*}
If we want sections that are invariant under the reparameterization group $\on{Diff}(S^1)$, we shall 
consider instead
\begin{align*}
 \on{Imm}(M,\R^2)/\on{Tra}&\cong\{c\in \on{Imm}(M,\R^2): \int_M c \ud s = 0\}\,,\\
 \on{Imm}(M,\R^2)/\on{Mot}&\cong\{c\in \on{Imm}(M,\R^2): \int_M c \ud s = 0 \text{ and } \int_M \al \ud s = 0\}\,.
\end{align*}
If we mean any of these sections, we shall write $\mathcal C(M,\mathbb R^2)$: in all cases it is 
the intersection of $\on{Imm}(M,\mathbb R^2)$ with a closed linear subspace of 
$C^\infty(M,\mathbb R^2)$. 
We will also need the space of positively oriented convex curves
\begin{align*}
\on{Imm}_{\on{conv}}(M,\R^2):=\{c\in \on{Imm}(M,\R^2): \ka(c) > 0 \}\,, 
\end{align*}
which is an open set in $\on{Imm}(M, \R^2)$ and thus itself a smooth manifold. 

\subsection{Variational formulae}\label{var}
We will need formulae that express, how the quantities that have been introduced in the previous 
sections change, if we vary the underlying curve $c$.  
For a smooth map $F $ from $\on{Imm}(M,\R^2)$ to any convenient vector space  we denote by
\[
dF(c).h = D_{c,h}F = \left.\frac{\ud}{\ud t}\right|_{t=0}F(c + th)  = 
\left.\frac{\ud}{\ud t}\right|_{t=0}F(\wt c(t,\quad))
\]
the variation  in the direction $h$, 
where $\wt c:\mathbb R\x M\to \mathbb R^2$ is any smooth variation with $\wt c(0,\th)= c(\th)$ and 
$\p_t|_0 \wt c(t,\th)=h(\th)$ for all $\th$.
Examples of maps $F$ include $v$, $n$, $\al$, $|c'|$, $\ka$.
In the following lemma we collect the basic variational formulae that we will use throughout the article.
\begin{lemma}
The first variations of the the turning angle $\al$, the unit tangent vector $v$, the normal vector 
$n$, the length element $|c'|$ and the curvature $\ka$ are given by  
\begin{align}
\label{var_al}
d\al(c).h &= \langle D_s h, n \rangle \\
\label{var_v}
dv(c).h &= \langle D_s h, n \rangle n \\
\label{var_n}
dn(c).h &= -\langle D_s h, n \rangle v \\
d(|c'|).h &= \langle D_s h, v \rangle |c'| 
\label{var_length}\\
d\ka(c).h &= \langle D_s^2 h, n \rangle - 2\ka \langle D_s h, v \rangle\,.
\label{var_ka}
\end{align}
\end{lemma}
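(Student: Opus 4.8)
The plan is to exploit that, for a smooth variation $\wt c$, the partial derivatives $\p_t$ and $\p_\th$ commute, so that $\p_t|_0 c' = \p_t|_0 \p_\th \wt c = \p_\th h = h'$. Every quantity in the lemma is built algebraically from $c'$ and $|c'|$, so I would differentiate the defining expressions in $t$ at $t=0$ and re-express the outcome in the orthonormal frame $(v,n)$, repeatedly using the decomposition $D_s h = \langle D_s h, v\rangle\,v + \langle D_s h, n\rangle\,n$. The four formulae \eqref{var_al}--\eqref{var_length} then fall out in a chain, each feeding into the next; the curvature formula \eqref{var_ka} is the only genuinely delicate one, and I treat it last.

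First I would handle the length element. Writing $|c'| = \langle c', c'\rangle^{1/2}$ and differentiating gives $d(|c'|).h = \langle c', h'\rangle/|c'|$; substituting $c' = |c'|\,v$ and $h' = |c'|\,D_s h$ yields $d(|c'|).h = \langle D_s h, v\rangle\,|c'|$, which is \eqref{var_length}. Next, differentiating the quotient $v = c'/|c'|$ and inserting the just-computed variation of $|c'|$ gives $dv.h = D_s h - \langle D_s h, v\rangle\,v$; the frame decomposition collapses this to $dv.h = \langle D_s h, n\rangle\,n$, which is \eqref{var_v}. Applying the constant rotation $J$ and using $Jn = -v$ immediately produces \eqref{var_n}. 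For the turning angle I would differentiate $v = (\cos\al, \sin\al)$ to obtain $dv.h = (d\al.h)\,n$, and then read off the coefficient of $n$ (equivalently pair with $n$) to get $d\al.h = \langle D_s h, n\rangle$, which is \eqref{var_al}.

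The main obstacle is \eqref{var_ka}, because the arc-length derivative $D_s = |c'|\i\,\p_\th$ itself depends on $c$, so I cannot simply differentiate $\ka = \langle D_s v, n\rangle$ termwise. The key step is a commutation formula: for any $t$-dependent family $f$ along the curve, differentiating $D_s f = |c'|\i f'$ in $t$, using $\p_t f' = (\p_t f)'$, and inserting \eqref{var_length}, gives
\[
d(D_s f).h = D_s(df.h) - \langle D_s h, v\rangle\,D_s f\,.
\]
I would apply this with $f = v$, substitute $dv.h = \langle D_s h, n\rangle\,n$, and expand $D_s\bigl(\langle D_s h, n\rangle\,n\bigr)$ via the product rule and the Frenet equations $D_s n = -\ka v$, $D_s v = \ka n$. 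Then
\[
d\ka.h = \langle d(D_s v).h, n\rangle + \langle D_s v, dn.h\rangle\,,
\]
where the second summand vanishes because $dn.h$ is parallel to $v$ while $D_s v = \ka n$ is parallel to $n$. Collecting the normal component of the first summand leaves exactly $\langle D_s^2 h, n\rangle - 2\ka\langle D_s h, v\rangle$, which is \eqref{var_ka}. The bookkeeping of the two $\ka\langle D_s h, v\rangle\,n$ contributions (one from the product rule, one from the commutation term) is where one must take care, but it is purely mechanical once the commutation formula is established.
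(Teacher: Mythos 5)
Your proposal is correct. Note that the paper does not actually prove this lemma---its ``proof'' consists of the single sentence citing \cite{Michor2007}---so your self-contained derivation is genuinely more than what the paper offers, and it is essentially the standard argument found in that reference: commute $\p_t$ with $\p_\th$ so that $\p_t|_0 c' = h'$, compute the variation of $|c'|$ first, deduce from it the operator commutation formula $d(D_s f).h = D_s(df.h) - \langle D_s h, v\rangle D_s f$, and decompose everything in the frame $(v,n)$. The delicate curvature step checks out: taking $f=v$ gives
\begin{equation*}
d(D_s v).h = \bigl(\langle D_s^2 h, n\rangle - 2\ka\langle D_s h, v\rangle\bigr)\,n - \ka\langle D_s h, n\rangle\, v\,,
\end{equation*}
where one summand $-\ka\langle D_s h, v\rangle\, n$ arises from $\langle D_s h, D_s n\rangle = -\ka\langle D_s h, v\rangle$ in the product rule and the other from the commutation term $-\langle D_s h, v\rangle D_s v = -\ka\langle D_s h, v\rangle\, n$; pairing with $n$, and observing that $\langle D_s v, dn(c).h\rangle = 0$ since $D_s v = \ka n$ is orthogonal to $dn(c).h \parallel v$, yields exactly \eqref{var_ka}. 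The remaining four formulas follow in the chain you describe, with \eqref{var_al} obtained by comparing $dv(c).h = (d\al(c).h)\,n$ (from differentiating $v=(\cos\al,\sin\al)$) against \eqref{var_v}.
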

\begin{proof}
The proof of these formulae can be found for example in  \cite{Michor2007}. 
\end{proof}

%
%

\subsection{Riemannian metrics on spaces of curves}
A Riemannian metric on the manifold of curves is a smooth family of
positive definite inner products $G_c(.,.)$ with 
$c \in \on{Imm}(M,\R^2)$, i.e.,
\[
G_c : T_c \on{Imm}(M,\R^2) \x T_c \on{Imm}(M,\R^2) \to \R\,.
\]
Each metric is weak in the sense that $G_c$, viewed as linear map 
\[
G_c : T_c\on{Imm}(M,\R^2) \to \left(T_c \on{Imm}(M,\R^2)\right)'
\]
from $T_c\on{Imm}(M,\R^2)$ into its dual, which  consists of $\mathbb R^2$-valued distributions on 
$M$, is injective but not surjective.  
In this article, we will study metrics  $G$ that are induced by an operator field $L$ via
\begin{equation*}
G_c(h,k) = \int_M \langle L_c h, k\rangle \ud s=\int_M \langle  h, L_ck\rangle \ud s\,,
\end{equation*}
with $c$ a curve and $h,k\in T_c \on{Imm}(M,\R^2)$. 
The \emph{momentum} $p = L_c h \otimes  \ud s\in (T_c \on{Imm}(M,\R^2))'$ allows us to represent the metric as
\[
G_c(h,k) =  \langle p, k \rangle_{T\!\on{Imm}} \,.
\]
Furthermore, we will be interested only in metrics that are invariant under the action of the 
reparameterization group $\on{Diff}(M)$, that is, for each $\ph \in \on{Diff}(M)$ the metric has to 
satisfy  
\[
G_{c\o\ph}(h\o\ph,k\o\ph) = G_c(h,k)\,.
\]
In this case the operator field $L$ inducing the metric is invariant under  reparameterizations.

\begin{remark*}
The reason for this restriction is that in applications to shape analysis one is mostly interested 
not in the curve $c$ itself, but only in the shape that the curve represents. Two curves $c$ and $e$ 
represent the same shape, if they differ by a reparameterization or relabelling of the points, i.e., 
$c = e \o \ph$. Thus one passes to the quotient   
$$\on{Imm}(M,\R^2)\to B_i(M,\R^2):=\on{Imm}(M,\R^2)/\on{Diff}(M)\,,$$ 
of shapes modulo reparameterizations. The quotient $B_i(M,\mathbb R^2)$ is an orbifold; see~\cite{Michor2006c}. 
Up to technicalities, equivalence classes $[c] 
\in B_i(M,\R^2)$ correspond to the image $c(M) \subset \R^2$ of the curve. Given a 
reparameterization invariant metric on $\on{Imm}(M,\R^2)$, it induces a metric on $B_i(M,\R^2)$, 
such that the projection map is a Riemannian submersion. See \cite{Bauer2012a} for details.
\end{remark*}

The following lemma provides a useful way to calculate the geodesic equation of such a metric.

\begin{lemma}\label{lem:geod_equa_general_metric}
Let $\mathcal C(M,\R^2)=\on{Imm}(M,\mathbb R^2)\cap V\subset  \on{Imm}(M,\R^2)$ be any of the sections 
mentioned in \ref{Background:plane curves}, where $V$ is the corresponding closed linear subspace 
of $C^\infty(M,\mathbb R^2)$.
Let $G=G^L$ be a weak Riemannian metric on $\mathcal C(M,\mathbb R^2)$ 
induced by an operator field $L$ via  
$$G^L_c(h,k)=\int_{M}\langle h,L_c k\rangle \ud s\,,$$
for $c\in \mathcal C(M,\R^2)$ and $h,k\in T_c \mathcal C(M,\R^2)$. 
Let $\bar L_c= L_c(\quad)\otimes \ud s$ and 
let $H_c(c_t,c_t)\in   (T_c \mathcal C(M,\R^2))'$ be defined via
$D_{c,m} (G_c(h,h))=\langle H_c(h,h),m\rangle_V$,
where $\langle \;,\;\rangle_V$ denotes the dual pairing between  
$V'$ and $V$.

Then the geodesic equation exists if and only if 
$\frac12 H_c(h,h)-(D_{c,h}L_c)(h)$ is in the image of  $\bar L_c$ 
and $(c,h)\mapsto \bar L_c\i\big(\frac12 H_c(h,h)-(D_{c,h}L_c)(h)\big)$ is 
smooth.
The geodesic equation can be written as:
$$
\boxed{\;
\begin{aligned}
p&=L_cc_t\otimes \ud s=\bar L_c(c_t)\\
p_t&=   \frac12 H_c(c_t,c_t)
\end{aligned}
\;}
\quad\text{  or }\quad
\boxed{\;
c_{tt} = \frac12 \bar L_c\i(H_c(c_t,c_t)) -\bar L_c\i\big(\p_t(\bar L_c)(c_t)\big)
\;}
$$
\end{lemma}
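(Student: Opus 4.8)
The plan is to characterize geodesics as the critical points of the energy functional $E(c)=\frac12\int_0^1 G_{c(t)}(c_t,c_t)\ud t$ on paths $c:[0,1]\to\mathcal C(M,\R^2)$ with fixed endpoints, and to read off both boxed forms from the first variation. First I would fix such a path and a variation field $m$ vanishing at $t=0,1$, and compute $\partial_u E$ along the varied path (writing $u$ for the variation parameter to avoid clashing with the geodesic time $t$). The only point requiring care is the bookkeeping: in the integrand $G_c(c_t,c_t)=\int_M\langle c_t,L_c c_t\rangle\ud s$ the curve $c$ enters both as the foot point (through $L_c$ and through $\ud s$) and through the velocity $c_t$, so the variation splits into two pieces.

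The first piece is the foot-point variation with $c_t$ held fixed, which by the very definition of $H_c$ equals $\langle H_c(c_t,c_t),m\rangle_V$. The second piece is the velocity variation; using the symmetry of $L_c$ and $\partial_u c_t=m_t$ it equals $2\int_M\langle m_t,L_c c_t\rangle\ud s=2\langle p,m_t\rangle_V$ with $p=\bar L_c(c_t)$. Hence
\[
\partial_u E=\frac12\int_0^1\big(\langle H_c(c_t,c_t),m\rangle_V+2\langle p,m_t\rangle_V\big)\ud t.
\]
Integrating the second term by parts in $t$, the boundary terms drop out because $m$ vanishes at the endpoints, leaving $\partial_u E=\int_0^1\langle\tfrac12 H_c(c_t,c_t)-p_t,m\rangle_V\ud t$. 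Since $m$ is arbitrary, the fundamental lemma of the calculus of variations gives $p_t=\tfrac12 H_c(c_t,c_t)$ together with $p=\bar L_c(c_t)$, which is the first boxed form.

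For the second form I would expand $p_t$ by the product rule along the path, $p_t=\partial_t\big(\bar L_c(c_t)\big)=(\partial_t\bar L_c)(c_t)+\bar L_c(c_{tt})$, where $\partial_t\bar L_c=D_{c,c_t}\bar L_c$, and then solve for the top-order term to obtain $\bar L_c(c_{tt})=\tfrac12 H_c(c_t,c_t)-(\partial_t\bar L_c)(c_t)$. The geodesic equation, understood as a smooth second-order ODE (equivalently, a geodesic spray) on $\mathcal C(M,\R^2)$, therefore exists precisely when this right-hand side lies in the image of $\bar L_c$ and the resulting $c_{tt}=\bar L_c\i(\cdots)$ depends smoothly on $(c,c_t)$; this is the asserted equivalence. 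To match the stated condition I would further expand $\partial_t\bar L_c=D_{c,c_t}(L_c(\,\cdot\,)\otimes\ud s)$ by the product rule, so that by the length-element variation \eqref{var_length} it differs from $(D_{c,c_t}L_c)(c_t)\otimes\ud s$ only by the lower-order term $\langle D_s c_t,v\rangle\,\bar L_c(c_t)$; writing $h$ for $c_t$ then recovers the expression $\tfrac12 H_c(h,h)-(D_{c,h}L_c)(h)$ up to that correction.

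The step I expect to be the genuine obstacle is the inversion of $\bar L_c$. Because all metrics considered here are weak, $\bar L_c$ is injective but not surjective, so its inversion is not automatic: the membership of $\tfrac12 H_c(c_t,c_t)-(\partial_t\bar L_c)(c_t)$ in the image of $\bar L_c$, together with smooth dependence of the inverse on base point and velocity, is a genuine hypothesis rather than a free consequence, and it is precisely the condition that must be verified for each of the four metrics studied later. Once the spray $c_{tt}=\bar L_c\i\big(\tfrac12 H_c(c_t,c_t)-(\partial_t\bar L_c)(c_t)\big)$ is known to be smooth, the existence and uniqueness of geodesics follow from the standard theory in the convenient-calculus framework of \cite{Michor1997}.
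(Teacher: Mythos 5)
Your proposal is correct and follows essentially the same route as the paper: the first variation of the energy with fixed endpoints, split into the foot-point variation (yielding $H_c$ by definition) and the velocity variation (handled via symmetry of $L_c$, equality of mixed partials, and integration by parts in $t$), then the fundamental lemma giving $p_t=\tfrac12 H_c(c_t,c_t)$ and the product-rule expansion of $p_t$ for the spray form. Your closing observation that $(\p_t\bar L_c)(c_t)$ differs from $(D_{c,c_t}L_c)(c_t)\otimes\ud s$ by the term $\langle D_s c_t,v\rangle\,\bar L_c(c_t)$ coming from the variation of $\ud s$ is a point the paper's proof glosses over, and flagging it is appropriate.
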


This lemma is an adaptation to the special situation here of the more general result 
\cite[Sect. 2.4]{Micheli2013}; there a \emph{robust weak Riemannian manifold} is one where the 
conclusion of this lemma holds together with a compatibility of the chart structure with the 
$G^L$-completions of all tangent spaces.   
Compare also with the version for diffeomorphism groups
\cite[Sect. 3.2]{Bauer2012d_preprint}.
In the sequel we shall compute $H_c(h,h)$ in many situations, but often we shall not check that it lies in 
the image of $\bar L$; the latter will follow directly from the representation of the metric via $R$-transforms. 

\begin{proof}
To  calculate the first variation of the energy we consider  a one-parameter family of curves 
$c:(-\ep,\ep)\times[0,1]\times M\to \R^2$ with fixed endpoints. The variational parameter will be denoted by
$\si\in (-\ep,\ep)$ and the time-parameter by $t \in [0, 1]$. We calculate:
\begin{align*}
\p_{\si}\frac12 \int_0^1 &G_c(c_t, c_t) \ud t = 
\frac12 \int_0^1 ({\p_{\si}} G_c)(c_t, c_t)\ud t
+ \int_0^1 G_c({\p_{\si}} c_t, c_t) \ud t 
\\&= 
\frac12 \int_0^1 \langle  H_c(c_t,c_t),c_{\si}\rangle_V\ud t
+ \int_0^1 G_c({\p_t} c_{\si}, c_t) \ud t 
\\&= 
\frac12 \int_0^1  \langle  H_c(c_t,c_t),c_{\si}\rangle_V \ud t
+ \int_0^1 \langle \bar L_c(c_t), {\p_t} c_{\si}\rangle_V \ud t 
\\&= 
\frac12 \int_0^1  \langle  H_c(c_t,c_t),c_{\si}\rangle_V \ud t
+ 0 - \int_0^1 \langle {\p_t}(\bar L_c(c_t)),  c_{\si}\rangle_V \ud t 
\\&= 
\int_0^1 G^L_c\Big(\bar L_c\i\big( \frac12  H_c(c_t,c_t) - \partial_t (\bar L_c c_{t})\big), c_\si\Big) \ud t
\\&= 
\int_0^1 G^L_c\Big(\bar L_c\i\big( \frac12  H_c(c_t,c_t) - (\partial_t \bar L_c)(c_{t})\big)- c_{tt}, c_\si\Big) \ud t\,.
\qedhere
\end{align*}
\end{proof}

\subsection{The $L^2$-metric on $C^\infty(M,N)$ and $H^k(M,N)$}

To show well-posedness of the geodesic equation we will need to work with the $L^2$-metric on Sobolev completions of manifolds of mappings. Here we summarise the necessary results.

Let $M$ be a compact manifold with volume form $\mu$ and $(N,g)$ a Riemannian manifold. We assume 
that both $M, N$ are finite dimensional. Let $k > \on{dim}M/2 + 1$. Then the $k$-th order Sobolev completion 
$H^k(M, N)$ of order $C^\infty(M,N)$ is a Hilbert 
manifold, and the tangent space is given by  
\[
T_q H^k(M,N) = \{ h \in H^k(M, TN) : \pi \o h = q \}\,.
\]
All results of this section also hold for $k=\infty$, i.e., for the Fr\'echet manifold $C^\infty(M,N)$.

We consider the weak Riemannian metric on $TH^k(M,N)$:
\begin{equation}
\label{eq:l2_metric}
G_q^{L^2}(h, k) = \int_M g_{q(x)}(h(x), k(x)) \ud \mu(x)\,.
\end{equation}
The following theorem summarizes the properties of $G^{L^2}$.

\begin{theorem}
\label{thm:ebin1970}
Let $k\in\mathbb N$ satisfy $k>\on{dim}M/2+1$ and let $G^{L^2}$ be defined by \eqref{eq:l2_metric}.
\begin{enumerate}
\item
$G^{L^2}$ defines a smooth weak Riemannian metric on $H^k(M,N)$.
\item
Let $\exp^g : TN \to N$ be the exponential map on $(N,g)$, defined on a neighbourhood of the zero-section. Then
\[
\exp^{L^2} : X \mapsto \exp^g \o X
\]
is the exponential map $\exp^{L^2} : T H^k(M,N) \to H^k(M,N)$ of the $G^{L^2}$-metric. 
It is a $C^\infty$-mapping defined on a neighbourhood of the zero-section.
\item
Let $\Xi^g : TN \to TTN$ be the geodesic spray of $(N,g)$. Then
\[
\Xi^{L^2} : X \mapsto \Xi^g \o X
\]
is the geodesic spray $\Xi^{L^2} : T H^k(M,N) \to TT H^k(M,N)$ of the $G^{L^2}$-metric. It is a $C^\infty$-mapping.
\item
Let $R^g : TN\x TN\x TN \to TN$ be the curvature tensor of $(N,g)$. Then
\[
R^{L^2} : (X, Y, Z) \mapsto R^g \o (X, Y, Z)
\]
is the curvature tensor $R^{L^2} : TH^k \x TH^k \x TH^k \to TH^k$ of the $G^{L^2}$-metric. It is a $C^\infty$-mapping.
\end{enumerate}
\end{theorem}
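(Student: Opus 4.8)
The whole theorem is the statement that, because the $L^2$-metric is \emph{local} (an integral over $M$ of an expression depending pointwise on $q,h,k$), every object attached to it — the metric, its Levi-Civita connection, the geodesic spray, the exponential map and the curvature — is obtained from the corresponding object on $(N,g)$ simply by postcomposition. The single analytic tool that makes this work is the \emph{$\Omega$-lemma} (Eliasson, Ebin--Marsden, Palais): for a smooth map $f\in C^\infty(N_1,N_2)$ of finite-dimensional manifolds, the postcomposition operator $f_*:q\mapsto f\o q$ is a \emph{smooth} map $H^k(M,N_1)\to H^k(M,N_2)$ whenever $k>\on{dim}M/2$, with tangent map $T(f_*)=(Tf)_*$. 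The threshold $k>\on{dim}M/2+1$ leaves one derivative to spare and guarantees that $H^k(M,\R)$ is a Banach algebra embedding continuously into $C^1$. My plan is to grant this lemma and then read off each of the four assertions as an instance of ``local geometric operation $=$ postcomposition''.

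For (1) I would write $G^{L^2}_q(h,k)=\int_M \big(\hat g\o(h,k)\big)\ud\mu$, where $\hat g:TN\oplus TN\to\R$, $\hat g(v,w)=g(v,w)$, is the smooth fibre-bilinear form. By the $\Omega$-lemma the assignment $(h,k)\mapsto \hat g\o(h,k)$ is smooth into $H^k(M,\R)$, and integration against $\mu$ is bounded linear, hence smooth; composing gives smoothness of $G^{L^2}$, with bilinearity and symmetry inherited pointwise. Positive-definiteness at each $q$ comes pointwise from $g$, and the metric is \emph{weak} precisely because the $L^2$ inner product is strictly coarser than the $H^k$ inner product defining the manifold topology.

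For (3) I would first record the canonical identifications $TH^k(M,N)=H^k(M,TN)$ and $TTH^k(M,N)=H^k(M,TTN)$, under which $\Xi^{L^2}:=(\Xi^g)_*$ is, by the $\Omega$-lemma applied to the smooth (second-order, fibre-quadratic) spray $\Xi^g:TN\to TTN$, a smooth vector field on $TH^k(M,N)$. To identify it as the geodesic spray of $G^{L^2}$ I would pass to a chart $u:U\to\R^n$ on $N$: in the induced chart the metric becomes the pointwise form $G_q(h,k)=\int_M g_{ij}(q)\,h^i k^j\,\ud\mu$, and because this is local in $M$ the Koszul formula produces a Christoffel map that is again local, $\Gamma^{L^2}_q(h,k)(x)=\Gamma^g_{q(x)}(h(x),k(x))$. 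Its fibre-quadratic part agrees with the second component of $\Xi^g\o X$, so the two sprays coincide.

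Parts (2) and (4) then follow formally. The exponential map is the time-one geodesic flow, and the flow of a postcomposition spray is the postcomposition of the base flow, whence $\exp^{L^2}(X)=\exp^g\o X$; by the $\Omega$-lemma it is smooth on the open neighbourhood $\{X:X(M)\subseteq\on{dom}(\exp^g)\}$ of the zero section, which is nonempty by compactness of $M$ and of $X(M)$. Likewise, locality of $\Gamma^{L^2}$ propagates through the usual curvature formula to give $R^{L^2}(X,Y,Z)(x)=R^g_{q(x)}(X(x),Y(x),Z(x))$, i.e.\ $R^{L^2}=(R^g)_*$, smooth by the $\Omega$-lemma applied to $R^g:TN\oplus TN\oplus TN\to TN$. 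The genuine obstacle is the $\Omega$-lemma itself: controlling the loss of derivatives so that the nonlinear superposition operators $f_*$ are not merely continuous but $C^\infty$, which is exactly where the Sobolev threshold $k>\on{dim}M/2+1$ enters; once it is in hand, the rest is bookkeeping organised around locality in $M$.
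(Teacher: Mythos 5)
Your proof is correct and takes essentially the same route as the paper relies on: the paper does not prove Theorem~\ref{thm:ebin1970} itself but defers to \cite[Thm.~9.1, Cor.~9.3]{Ebin1970}, \cite[Prop.~2]{Bao1993} and \cite[Prop.~3.4]{Misiolek1993}, and those proofs are exactly the scheme you outline --- the $\Omega$-lemma for postcomposition operators on $H^k$ with $k>\on{dim}M/2+1$, combined with locality of the $L^2$-metric, so that spray, exponential map and curvature are pointwise operations and hence smooth. Your handling of the one genuinely delicate point (that a weak metric need not admit a Levi-Civita connection, so the pointwise Christoffel map must be exhibited and verified against the Koszul identity rather than ``solved for'') matches the cited arguments, and nothing substantive is missing.
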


The proof of this theorem 
can be found in \cite[Thm.~9.1]{Ebin1970}, \cite[Cor.~9.3]{Ebin1970}, \cite[Prop. 2]{Bao1993} 
and \cite[Prop. 3.4]{Misiolek1993}. 

Given a submanifold of $H^k(M,N)$, the smoothness of the induced geodesic spray can be shown using 
the following theorem. 

\begin{theorem}
\label{thm:ebin1970_proj}
Let $k \in \mathbb N$ be as above and let $\mathscr M$ be a smooth submanifold of $H^k(M,N)$, such that the projection 
$\on{Proj}^{\mathscr M} : TH^k(M,N)\upharpoonright{\mathscr M} \to T\mathscr M$ is smooth. Then the 
geodesic spray of the metric $G^{L^2}$ on $\mathscr M$ is given by  
\[
\Xi^{\mathscr M} = \on{Proj}^{\mathscr M} \o \Xi^{L^2}
\]
and it is a smooth map.
\end{theorem}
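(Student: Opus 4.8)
\emph{Strategy.} The statement is the weak-Riemannian, infinite-dimensional incarnation of the classical fact that the geodesics of a Riemannian submanifold are the tangential projections of the ambient geodesic equation. Since, by Theorem~\ref{thm:ebin1970}, $G^{L^2}$ is a genuine weak Riemannian metric whose geodesic spray $\Xi^{L^2}$ is $C^\infty$, the formula itself is classical algebra and the only new content is the \emph{smoothness} of the resulting object on $\mathscr M$. The plan is therefore to identify the projected spray with the geodesic spray of the induced metric $G^{L^2}\upharpoonright\mathscr M$ by way of the Gauss formula, and then to read off smoothness as a composition of smooth maps.

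\emph{The Gauss formula.} First I would exhibit the Levi-Civita connection of the induced metric explicitly. Writing $\nabla^{L^2}$ for the smooth covariant derivative associated with $\Xi^{L^2}$ (its existence being guaranteed by the smooth ambient spray of Theorem~\ref{thm:ebin1970}), I would set
\[
\nabla^{\mathscr M}_X Y := \on{Proj}^{\mathscr M}\big(\nabla^{L^2}_X Y\big)
\]
for vector fields $X,Y$ tangent to $\mathscr M$, and then check that $\nabla^{\mathscr M}$ is torsion-free and metric for $G^{L^2}\upharpoonright\mathscr M$, so that by the uniqueness of the Levi-Civita connection (which holds for a weak metric whenever such a connection exists) it \emph{is} the Levi-Civita connection. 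Torsion-freeness follows from that of $\nabla^{L^2}$ together with the fact that the Lie bracket $[X,Y]$ of vector fields tangent to $\mathscr M$ is again tangent, whence $\on{Proj}^{\mathscr M}[X,Y]=[X,Y]$. Metricity uses crucially that $\on{Proj}^{\mathscr M}$ is the $G^{L^2}$-\emph{orthogonal} projection: for tangent $X,Y,Z$ one has $G^{L^2}(\nabla^{L^2}_X Y,Z)=G^{L^2}(\on{Proj}^{\mathscr M}\nabla^{L^2}_X Y,Z)$, so differentiating $G^{L^2}(Y,Z)$ along $X$ and invoking metricity of $\nabla^{L^2}$ yields metricity of $\nabla^{\mathscr M}$.

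\emph{Passage to sprays and smoothness.} It then remains to translate this connection statement into the claimed identity. A curve $c$ in $\mathscr M$ is a geodesic of the induced metric precisely when $\nabla^{\mathscr M}_{c_t}c_t=\on{Proj}^{\mathscr M}\big(\nabla^{L^2}_{c_t}c_t\big)=0$, i.e. when its ambient covariant acceleration is $G^{L^2}$-orthogonal to $T\mathscr M$. Using the connector of $\nabla^{L^2}$ to encode covariant accelerations, this says that the second-order vector field on $T\mathscr M$ whose integral curves are these geodesics is the image of $\Xi^{L^2}\upharpoonright T\mathscr M$ under the natural projection $TTH^k\upharpoonright T\mathscr M\to TT\mathscr M$ induced by $\on{Proj}^{\mathscr M}$; this is exactly the boxed identity $\Xi^{\mathscr M}=\on{Proj}^{\mathscr M}\o\Xi^{L^2}$. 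Smoothness of $\Xi^{\mathscr M}$ is then immediate: $\Xi^{L^2}$ is $C^\infty$ by Theorem~\ref{thm:ebin1970}(3) and $\on{Proj}^{\mathscr M}$ is $C^\infty$ by hypothesis, so their composition is smooth, whether $H^k$ is read as a Hilbert manifold (finite $k$) or as the convenient manifold $C^\infty(M,N)$.

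\emph{Main obstacle.} The difficulty is not the algebra of the Gauss formula, which is formally identical to the finite-dimensional case, but the weak-metric setting: because $G^{L^2}$ does not identify a tangent space with its dual, neither the $G^{L^2}$-orthogonal projection onto $T\mathscr M$ nor the covariant derivative exist a priori. The two hypotheses — smoothness of $\on{Proj}^{\mathscr M}$, and the smoothness of $\Xi^{L^2}$ from Theorem~\ref{thm:ebin1970} — are precisely what make every step legitimate, and the one place demanding more than formal manipulation is checking that the given $\on{Proj}^{\mathscr M}$ really is $G^{L^2}$-orthogonal, since metric-compatibility of $\nabla^{\mathscr M}$, and hence its identification with the Levi-Civita connection, hinges on it. A secondary bookkeeping point is making precise the identification $TT\mathscr M\hookrightarrow TTH^k$ and the induced projection needed to interpret $\on{Proj}^{\mathscr M}\o\Xi^{L^2}$ as a map into $TT\mathscr M$.
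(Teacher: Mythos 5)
Your argument is correct, but note that the paper itself does not prove this theorem: its entire ``proof'' is the citation \cite[Thm.~11.1]{Ebin1970}, and what you have written is in substance a reconstruction of the standard argument behind that reference (tangential projection of the ambient Levi-Civita connection, Gauss formula, smoothness by composition). What your write-up adds beyond the bare citation is the correct isolation of the two weak-metric subtleties: only \emph{uniqueness} of the Levi-Civita connection is free (via the Koszul formula and weak nondegeneracy of $G^{L^2}$), existence being supplied by the ambient spray of Thm.~\ref{thm:ebin1970}; and metricity of $\nabla^{\mathscr M}:=\on{Proj}^{\mathscr M}\o\nabla^{L^2}$ uses precisely that $\on{Proj}^{\mathscr M}$ is the $G^{L^2}$-\emph{orthogonal} projection, which is the standing hypothesis. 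The one point you flag but leave unresolved is worth settling, since it is where the displayed formula does not literally typecheck: $\Xi^{L^2}$ takes values in the second tangent bundle, while $\on{Proj}^{\mathscr M}$ acts on $TH^k(M,N)\upharpoonright\mathscr M$, so the composition must be read either through the connector of $\nabla^{L^2}$ (project the vertical part of the spray, keep the horizontal part, which is automatically tangent because sprays are second-order), or in the form the paper itself uses later in the proof of Thm.~\ref{thm:m3_spray_smooth}, namely $T\on{Proj}^{\on{im}}\o\Xi^{L^2}\o T\io$ with $\io$ the inclusion of the submanifold. Once that identification is made, your chain of reasoning --- the projected connection is torsion-free and metric, hence \emph{the} Levi-Civita connection of the induced metric; its geodesics are exactly the curves whose ambient covariant acceleration is $G^{L^2}$-normal to $\mathscr M$; hence the induced geodesic spray is the projected spray and is smooth as a composition of smooth maps --- is exactly the intended proof.
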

This theorem is proven in \cite[Thm.~11.1]{Ebin1970}.

\section{A flat $H^2$-type metric}
\label{Sec:metric1}

\subsection{The metric and its geodesic equation}
In this section we will study an $H^2$-type metric on $\on{Imm}_{\on{conv}}(M,\R^2)/\on{Mot}$ -- the space of strictly convex curves. This metric has vanishing vanishing curvature for $M=[0,2\pi]$ and for $M=S^1$ the space is isometric to a codimension 2 submanifold of a flat space. The metric is 
given by   
\begin{equation}\label{metric1}
G_c(h,k)=\int_M \ka^{-3/2}\langle D^2_sh,n\rangle\langle D^2_sk,n\rangle + \langle D_sh,v\rangle\langle D_sk,v\rangle\ud s\,.
\end{equation}
Note that the metric is only defined for strictly convex curves, i.e., those satisfying $\ka > 
0$. It is sometimes more convenient to write $G$ via its associated operator $L$, 
\begin{equation*}
G_c(h,k) = \int_M \langle L_c h, k\rangle \ud s=\int_M \langle  h, L_ck\rangle \ud s\,,
\end{equation*}
where  $L_c:T_c\on{Imm}_{\on{conv}}(S^1,\R^2)\to T_c\on{Imm}_{\on{conv}}(S^1,\R^2)$ is given by
\begin{equation*}\label{inertia_operator_metric1}
 L_ch =   D_s^2 \big(\ka^{-3/2} \langle D_s^2 h, n \rangle n \big)\rangle  -D_s\big(\langle D_s h, v\rangle v\big)\,.
\end{equation*}
\begin{lemma}\label{lem:kernel:metric1}
The null space of the bilinear form $G_c(.,.)$ is spanned by constant vector fields and infinitesimal rotations, i.e.,
\begin{equation*}
 \on{ker}(G_c)=\left\{h\in T_c\on{Imm}_{\on{conv}}(M,\R^2): h=a + b.Jc,\, a\in\R^2, b \in \R\right\}\,. 
\end{equation*}
\end{lemma}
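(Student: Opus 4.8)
The plan is to exploit that $G_c$ is positive semidefinite and diagonal in the moving frame $(v,n)$, which reduces the computation of its radical to two pointwise conditions, and then to integrate these using the Frenet equations.

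First I would note that since $\ka > 0$ on $\on{Imm}_{\on{conv}}(M,\R^2)$, the weight $\ka^{-3/2}$ is strictly positive, so $G_c(h,h) = \int_M \ka^{-3/2}\langle D_s^2 h, n\rangle^2 + \langle D_s h, v\rangle^2 \ud s$ is a sum of nonnegative terms. Hence $G_c$ is positive semidefinite, and for such a form the radical $\on{ker}(G_c) = \{h : G_c(h,k) = 0 \text{ for all } k\}$ coincides with the null cone $\{h : G_c(h,h) = 0\}$: indeed, by Cauchy--Schwarz $G_c(h,k)^2 \le G_c(h,h)\,G_c(k,k)$, so $G_c(h,h)=0$ forces $G_c(h,k)=0$ for every $k$. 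Since the integrand is a sum of squares with positive coefficients, $G_c(h,h) = 0$ holds if and only if both $\langle D_s^2 h, n\rangle = 0$ and $\langle D_s h, v\rangle = 0$ pointwise on $M$.

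Next I would integrate these two equations. Writing $D_s h$ in the frame $(v,n)$ and using $\langle D_s h, v\rangle = 0$ gives $D_s h = f\, n$ with $f := \langle D_s h, n\rangle$. Differentiating and applying the Frenet equation $D_s n = -\ka v$ yields $D_s^2 h = (D_s f)\, n - f\ka\, v$, so that $\langle D_s^2 h, n\rangle = D_s f$. The first condition therefore forces $D_s f = 0$, i.e. $f \equiv b$ is constant (clear for $M=[0,2\pi]$, and equally for $M=S^1$ since $f$ is then a function on the circle with vanishing derivative). Hence $D_s h = b\, n$. Because $J$ is a fixed linear map and $n = Jv = J D_s c = D_s(Jc)$, this reads $D_s(h - b\, Jc) = 0$, whence $h - b\, Jc = a$ is a constant vector in $\R^2$, giving $h = a + b\, Jc$.

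Finally I would verify the converse inclusion directly: for $h = a + b\, Jc$ one computes $D_s h = b\, n$, so $\langle D_s h, v\rangle = b\langle n, v\rangle = 0$, and $D_s^2 h = -b\ka\, v$, giving $\langle D_s^2 h, n\rangle = 0$; thus $G_c(h,h) = 0$ and $h \in \on{ker}(G_c)$. This yields equality of the two spaces. The argument is essentially routine; the only point needing a little care is the reduction of the radical to the null cone via positive semidefiniteness, together with the observation that the infinitesimal rotations arise precisely as the primitive $b\,Jc$ of the frame vector $b\,n$.
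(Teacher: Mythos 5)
Your proposal is correct and follows essentially the same route as the paper's proof: reduce the radical of $G_c$ to the null cone, deduce the two pointwise conditions $\langle D_s h, v\rangle = 0$ and $\langle D_s^2 h, n\rangle = 0$, conclude $D_s h = b\,n$ with $b$ constant, and integrate to get $h = a + b\,Jc$. The only differences are cosmetic: you justify the radical-equals-null-cone step via Cauchy--Schwarz where the paper observes directly that the two pointwise conditions annihilate $G_c(h,k)$ for every $k$, and you spell out the Frenet computation and the converse inclusion that the paper leaves implicit.
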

\begin{proof}
The null space of a symmetric bilinear form $A$ on $V\times V$  is the set
$$\on{ker}(A)=\{ v\in V: A(v,w)=0, \forall w\in V \}\,.$$
Thus for all $h$ in the kernel of $G_c$ we have $G_c(h,h)=0$. 
From this we see that for all $h\in \on{ker}(G_c)$ we have $\langle D_s h, v\rangle=0$ and   $\langle D_s^2 h, n \rangle=0$. 
If $h$ satisfies the two conditions then we also have $G_c(h,k)=0$ for all 
$k\in T_c\on{Imm}_{\on{conv}}(M,\R^2)$ and thus we see that the null space of  
$G_c$ consists exactly of these $h$ with $G_c(h,h)=0$. The condition $\langle D_s h, v \rangle =0$ yields $D_sh=b.n$, 
with $b\in C^{\infty}(M,\R)$, and the condition $\langle D_s^2 h, n \rangle =0$ implies that $b$ is constant. Taking the antiderivative of $D_sh$ we obtain the desired result.  
\end{proof}
As an immediate consequence of Lem.~\ref{lem:kernel:metric1} we obtain that $G_c$ is a weak 
Riemannian metric on $\on{Imm}_{\on{conv}}(M,\R^2)/{\on{Mot}}$.  
Its geodesic equation is giben by the following theorem.

\begin{theorem}
\label{thm:m1_geodesic_equation}
On the manifold $\on{Imm}_{\on{conv}}(M,\R^2)/\on{Mot}$ of plane parametrized curves modulo 
Euclidean motions $G_c(.,.)$ defines a weak Riemannian metric.   
For $M=S^1$ the geodesic equation is given by
\begin{align*}
p&=Lc_t\otimes \ud s\\&= D_s^2 \big(\ka^{-3/2} \langle D_s^2 c_t, n \rangle n \big)  
-D_s\big(\langle D_s c_t, v\rangle v\big)\otimes \ud s\,,\\
p_t&=D_s\Big(\frac12\langle D_sc_t,v\rangle^2v-\langle D_sc_t,n\rangle\langle D_sc_t,v\rangle n
-  D_s\big(\ka^{-3/2} \langle D_s c_t, n \rangle\langle D_s^2 c_t, n \rangle\big) v 
\\&
\phantom{=}\; +   \ka^{-3/2}\langle D_s^2 c_t, v \rangle\langle D_s^2 c_t, n \rangle n 
-\frac34D_s\Big(\ka^{-5/2} \langle D_s^2 c_t, n \rangle^2 n\Big) \Big)\otimes\ud s\,,
\end{align*}
with the additional constraint 
$$c_t\in T_c(\on{Imm}(S^1,\R^2)/\on{Mot})\cong\{h\in C^{\infty}(M,\R^2): h(0)=0, \langle D_sh(0),n(0)\rangle=0\}\,.$$
\end{theorem}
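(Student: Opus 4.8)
The first assertion, that $G_c$ is a genuine positive-definite weak Riemannian metric on $\on{Imm}_{\on{conv}}(M,\R^2)/\on{Mot}$, is immediate from Lemma~\ref{lem:kernel:metric1}: the kernel of the bilinear form on $\on{Imm}_{\on{conv}}(M,\R^2)$ is spanned by the constant vector fields and the infinitesimal rotations $h=a+b.Jc$, and these are exactly the infinitesimal generators of the motion group $\on{Mot}=\R^2\ltimes SO(2)$; hence $G_c$ descends to a nondegenerate form on the tangent spaces of the quotient section. To derive the geodesic equation the plan is to invoke Lemma~\ref{lem:geod_equa_general_metric}, which for a metric $G^L_c(h,k)=\int_M\langle h,L_ck\rangle\ud s$ gives the geodesic equation in Hamiltonian form as $p=\bar L_c(c_t)$ and $p_t=\tfrac12 H_c(c_t,c_t)$, where $H_c(h,h)\in V'$ is characterized by $D_{c,m}(G_c(h,h))=\langle H_c(h,h),m\rangle_V$. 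The first equation is simply the definition of the operator $L_c$ associated to \eqref{metric1}, recorded just before the theorem; so the entire content of the second assertion is the computation of the variational vector field $H_c(h,h)$, taken at $h=c_t$.

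The computation proceeds by differentiating $G_c(h,h)=\int_M \ka^{-3/2}\langle D_s^2h,n\rangle^2+\langle D_sh,v\rangle^2\ud s$ with respect to the foot point $c$ in a direction $m$, holding $h$ fixed. The necessary first-order data are the formulae of the variational lemma, in particular $dv(c).m=\langle D_sm,n\rangle n$ from \eqref{var_v}, $dn(c).m=-\langle D_sm,n\rangle v$ from \eqref{var_n}, and $d\ka(c).m=\langle D_s^2m,n\rangle-2\ka\langle D_sm,v\rangle$ from \eqref{var_ka}, together with two auxiliary rules that follow from \eqref{var_length}: the arc-length measure varies by $d(\ud s).m=\langle D_sm,v\rangle\,\ud s$, and the arc-length derivative obeys the operator identity $d(D_s\,\cdot\,).m=-\langle D_sm,v\rangle D_s(\,\cdot\,)$, applied together with the ordinary product rule, so that for a twice-differentiated fixed field $d(D_s^2h).m=-2\langle D_sm,v\rangle D_s^2h-D_s(\langle D_sm,v\rangle)\,D_sh$. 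From these I obtain $d(\ka^{-3/2}).m=-\tfrac32\ka^{-5/2}\bigl(\langle D_s^2m,n\rangle-2\ka\langle D_sm,v\rangle\bigr)$ and the variations of the two scalar pairings $\langle D_s^2h,n\rangle$ and $\langle D_sh,v\rangle$; substituting everything into $D_{c,m}G_c(h,h)$ produces an integral that is linear in $m$ and involves $m$ only through $D_sm$ and $D_s^2m$.

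The final step is to integrate by parts in arc length to move all derivatives off $m$ and read off $H_c(h,h)$. Since $M=S^1$ there are no boundary terms, so each $\int\langle A,D_sm\rangle\ud s$ becomes $-\int\langle D_sA,m\rangle\ud s$ and each $\int\langle B,D_s^2m\rangle\ud s$ becomes $\int\langle D_s^2B,m\rangle\ud s$. Assembling the resulting coefficients and grouping the total derivatives expresses $H_c(h,h)$ as an arc-length divergence, and setting $p_t=\tfrac12 H_c(c_t,c_t)$ reproduces the displayed formula; the coefficient $-\tfrac34\ka^{-5/2}\langle D_s^2c_t,n\rangle^2$ arises precisely as $\tfrac12\cdot(-\tfrac32)$ from the $\langle D_s^2m,n\rangle$ part of $d(\ka^{-3/2}).m$ after a double integration by parts, while the terms $\tfrac12\langle D_sc_t,v\rangle^2v-\langle D_sc_t,n\rangle\langle D_sc_t,v\rangle n$ come directly from the variation of the second summand $\langle D_sh,v\rangle^2$. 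The constraint $c_t\in T_c(\on{Imm}(S^1,\R^2)/\on{Mot})$ is inherited automatically from working on the section. I expect the only real obstacle to be bookkeeping: keeping the $v$- and $n$-components of each variation straight through the weighting by $\ka^{-3/2}$ and its own variation, and ensuring the single and double integrations by parts deposit the $D_s$'s in the pattern shown. Following Lemma~\ref{lem:geod_equa_general_metric}, I will not separately verify here that $\tfrac12 H_c(h,h)-(D_{c,h}L_c)(h)$ lies in the image of $\bar L_c$, since solvability of the geodesic equation will instead follow from the $R$-transform representation established later.
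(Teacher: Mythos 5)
Your proposal follows exactly the paper's own route: the kernel lemma gives the metric statement on the quotient, Lemma~\ref{lem:geod_equa_general_metric} together with the variational formulae \eqref{var_v}--\eqref{var_ka} (and the induced rules for $d(D_s)$, $d(D_s^2h)$ and $d(\ud s)$) yields $D_{c,m}G_c(h,h)$, integration by parts over $S^1$ produces $H_c(h,h)$ with the stated coefficients, and solvability is deferred to the $R$-transform representation, just as in the paper. The coefficient checks you give (e.g.\ $-\tfrac34=\tfrac12\cdot(-\tfrac32)$ from the $\langle D_s^2m,n\rangle$ part of $d(\ka^{-3/2}).m$) are consistent with the paper's computation, so the proposal is correct and essentially identical in approach.
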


The first part of the theorem applies to both $M=S^1$ and $M=[0,2\pi]$. The geodesic equation on 
the space of closed curves would additionaly contain a series of boundary terms arising from 
integrations by parts.

\begin{remark*}
Note that $L_c: 
T_c\left(\on{Imm}(M,\R^2)/{\on{Mot}}\right)\to T_c\left(\on{Imm}(M,\R^2)/{\on{Mot}}\right)$ is not 
an elliptic operator, since the highest derivative appears only in the normal direction. Thus we cannot 
apply the well-posedness results from \cite{Michor2007} or \cite{Bauer2011b}. 
Nevertheless we will show in Sects. 
\ref{Sec:metric1:open} and \ref{Sec:metric1:closed} that the geodesic equation is locally 
well-posed both on the space of open curves as well as the space of closed curves.   
\end{remark*}

\begin{proof}
To calculate the formula for the geodesic equation we  use Lem.~\ref{lem:geod_equa_general_metric}.
Using the variational formulas from Sect.~\ref{var} we calculate for a fixed vector field $h$ 
the variation of the components of the metric $G$: 
\begin{align*}
D_{c,m} \left( \langle D_s h,v \rangle \right) &=  
-\langle D_s m, v \rangle\langle D_sh,v\rangle+\langle D_sm, n \rangle\langle D_sh,n\rangle   
\\
D_{c,m} \left( \langle D_s^2 h, n \rangle \right) &=  
- \langle D_s h, n \rangle D_s \left(\langle D_s m, v \rangle \right) 
- \langle D_s^2 h, v \rangle \langle D_s m, n \rangle 
\\
&\phantom{=}\,\, - 2 \langle D_s^2 h, n \rangle\langle D_s m, v \rangle \,.
\end{align*}
Thus we obtain the following expression for the variation of the metric:
\begin{align*}
&D_{c,m}(G_c(h,h))= \int_{S^1} -\langle D_s m, v \rangle\langle D_sh,v\rangle^2
+2\langle D_sm, n \rangle\langle D_sh,n\rangle\langle D_sh,v\rangle
\\
& \qquad- 2\ka^{-3/2}\langle D_s h, n \rangle\langle D_s^2 h, n \rangle D_s \left(\langle D_s m, v \rangle \right) 
- 2\ka^{-3/2}\langle D_s^2 h, v \rangle\langle D_s^2 h, n \rangle \langle D_s m, n \rangle 
\\
& \qquad - \frac32 \ka^{-5/2}\langle D_s^2 m, n \rangle\langle D^2_s h, n \rangle^2  \ud s\,.
\end{align*}
We can now calculate $H_c(h,h)$ using a series of integrations by parts:
\begin{align*}
H_c(h,h)&= D_s\Big(\langle D_sh,v\rangle^2v-2\langle D_sh,n\rangle\langle D_sh,v\rangle n
- 2 D_s\big(\ka^{-3/2} \langle D_s h, n \rangle\langle D_s^2 h, n \rangle\big) v 
\\&
\phantom{=}\; +2   \ka^{-3/2}\langle D_s^2 h, v \rangle\langle D_s^2 h, n \rangle n 
-\frac32D_s\Big(\ka^{-5/2} \langle D_s^2 h, n \rangle^2 n\Big) \Big)\otimes \ud s\,.
\end{align*}

The existence of the geodesic equation, i.e., the invertibility of $\bar L$ will follow from the 
representation of the metric via the $R$-transform. The $R$-transform is an isometry from 
$\on{Imm}(M,\R^2)/\on{Mot}$ onto its image $\on{im}(R)$ and the image is a smooth submanifold of 
the space $(C^\infty(M,\R^2), G^{L^2})$ with a smooth orthogonal projection $TC^\infty(M,\R^2) \upharpoonright \on{im}(R) 
\to T\on{im}(R)$. Theorem~\ref{thm:ebin1970_proj} shows that the geodesic spray of the 
$L^2$-meric restricted  to $\on{im}(R)$ exists and is smooth and thus we can pull it back via $R$ to 
$\on{Imm}(M,\R^2)/\on{Mot}$.       
Hence the geodesic equation exists on $\on{Imm}(M,\R^2)/\on{Mot}$.
\end{proof}

\subsection{The $R$-transform}\label{Sec:metric1:Rtrans}
Consider the map
\begin{equation}
\label{m1_rmap}
R: \left\{ \begin{array}{ccc}
\on{Imm}_{\on{conv}}(M, \R^2)/{\on{Mot}} &\to &C^\infty(M, \R^2) \\
c & \mapsto & \sqrt{|c'|} \,(2,4\ka^{1/4})
\end{array}\right. \,,
\end{equation}
and equip the space $C^\infty(M,\R^2)$ with the $L^2$-Riemannian metric,
\begin{equation}
\label{l2_met_r2_eukl}
G_q^{L^2}(h, k) = \int_{M} \langle h(\th), k(\th) \rangle \ud\th\,.
\end{equation}
Here $q \in C^\infty(M, \R^2)$ and $h, k \in T_q C^\infty(M, \R^2)$. Note that the Riemannian metric 
$G^{L^2}$ doesn't depend on the point $q$. The space $(C^\infty(M,\R^2), G^{L^2})$ is therefore a flat 
Riemannian manifold. 

\begin{theorem}
\label{thm:m1_rmap}
The map
\[
R\,:\, (\on{Imm}_{\on{conv}}(M, \R^2)/\on{Mot}, G) \to (C^\infty(M,\R^2), G^{L^2})
\]
is an injective isometry between weak Riemannian manifolds, i.e.,
\[
G_c(h, k) = G_{R(c)}^{L^2}(dR(c).h, dR(c).k)=\int_{M} \langle dR(c).h(\th), dR(c).k(\th)\rangle \ud \th
\]
 for $h,k \in T_c \on{Imm}_{\on{conv}}(M,\R^2)/\on{Mot}$.
\end{theorem}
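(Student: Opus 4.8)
The plan is to reduce the whole statement to a single computation of the differential $dR(c).h$, from which both the isometry property and injectivity follow almost immediately. First I would write $R(c) = (R_1, R_2)$ with $R_1 = 2|c'|^{1/2}$ and $R_2 = 4|c'|^{1/2}\ka^{1/4}$, and differentiate each component using the variational formulae $d(|c'|).h = \langle D_s h, v\rangle|c'|$ from \eqref{var_length} and $d\ka(c).h = \langle D_s^2 h, n\rangle - 2\ka\langle D_s h, v\rangle$ from \eqref{var_ka}. For the first component this gives $dR_1.h = |c'|^{1/2}\langle D_s h, v\rangle$. The key point is the second component: differentiating $|c'|^{1/2}$ produces a term proportional to $\langle D_s h, v\rangle$, and the $-2\ka\langle D_s h, v\rangle$ piece inside $d\ka.h$ produces another; with the specific constants $2,4$ and the power $\ka^{1/4}$ these two contributions cancel exactly, leaving $dR_2.h = |c'|^{1/2}\ka^{-3/4}\langle D_s^2 h, n\rangle$. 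In other words,
\[
dR(c).h = |c'|^{1/2}\big(\langle D_s h, v\rangle,\; \ka^{-3/4}\langle D_s^2 h, n\rangle\big),
\]
and I expect this cancellation to be the one genuinely delicate computation in the proof — everything afterwards is bookkeeping.

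Granting this formula, the isometry is immediate. Forming the $L^2$-inner product of $dR(c).h$ and $dR(c).k$, the common prefactor $|c'|^{1/2}\cdot|c'|^{1/2} = |c'|$ turns the integration $\ud\th$ into arc-length integration $\ud s = |c'|\ud\th$, so that
\[
\int_M \langle dR(c).h, dR(c).k\rangle\,\ud\th = \int_M \langle D_s h, v\rangle\langle D_s k, v\rangle + \ka^{-3/2}\langle D_s^2 h, n\rangle\langle D_s^2 k, n\rangle\,\ud s,
\]
which is exactly $G_c(h,k)$ from \eqref{metric1}. As a built-in consistency check, the right-hand side of the formula for $dR(c).h$ vanishes precisely when $\langle D_s h, v\rangle = 0$ and $\langle D_s^2 h, n\rangle = 0$, i.e.\ exactly on the null space identified in Lemma~\ref{lem:kernel:metric1}; this confirms that $dR$ descends to, and is injective on, the tangent space of the quotient.

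For injectivity of $R$ itself I would argue by reconstruction. From $R(c)$ one reads off the invariant data $|c'| = R_1^2/4$ and $\ka = (R_2/(2R_1))^4$. Since $\ka = D_s\al$, integrating recovers the turning angle $\al(\th) = \al(0) + \int_0^\th \ka\,|c'|\,\ud\sigma$, hence $v = (\cos\al, \sin\al)$, and a further integration recovers $c(\th) = c(0) + \int_0^\th |c'|\,v\,\ud\sigma$. On the chosen section of $\on{Imm}_{\on{conv}}(M,\R^2)/\on{Mot}$ the normalizations $c(0) = 0$ and $\al(0) = 0$ fix the two constants of integration, so $c \mapsto R(c)$ is injective. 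I would note at the outset that $R$ is well-defined on the quotient at all, which is clear because $|c'|$ and $\ka$ are both invariant under Euclidean motions; this is the only other point requiring a remark, and it is immediate.
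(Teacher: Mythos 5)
Your proposal is correct and follows essentially the same route as the paper's proof: both compute $dR(c).h$ from the variational formulae \eqref{var_length} and \eqref{var_ka}, observe the cancellation of the $\langle D_s h, v\rangle$-terms yielding $dR(c).h = \sqrt{|c'|}\,\bigl(\langle D_s h, v\rangle,\;\ka^{-3/4}\langle D_s^2 h, n\rangle\bigr)$, conclude the isometry by absorbing $|c'|$ into $\ud s$, and prove injectivity by reconstructing $\al$ from $D_s\al=\ka$ and then $c$ by integration, with the constants of integration accounting for the quotient by $\on{Mot}$. Your added remarks (well-definedness on the quotient, the kernel consistency check against Lemma~\ref{lem:kernel:metric1}) are harmless embellishments, not a different method.
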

\begin{proof}
Using formulas \eqref{var_length} and \eqref{var_ka} we obtain 
\begin{align*}
D_{c,h} \left( \sqrt{|c'|} \right) &= \tfrac 12 \langle D_s h, v \rangle \sqrt{|c'|} \\
D_{c,h} \left( \ka^{1/4} \sqrt{|c'|} \right) &=\tfrac 14 \ka^{-3/4} \langle D_s^2 h, n \rangle \sqrt{|c'|}\,,
\end{align*}
and thus the derivative of the $R$-transform is
\[
dR(c).h = \left( \langle D_s h, v \rangle \sqrt{|c'|},\, \ka^{-3/4} \langle D_s^2 h, n \rangle \sqrt{|c'|} \right)\,.
\]
Hence
\[
\langle dR(c).h, dR(c).h\rangle = \left(\langle D_c h, v \rangle^2  + \ka^{-3/2}\langle D_s^2 h, n \rangle^2 \right) |c'|\,,
\]
and the first statement of the theorem follows. 

To show that $R$ is injective we recall 
that one can recover the turning angle up to a constant  from the curvature function $\ka$ and the 
arclength $|c'|$ via $D_s\al=\ka$, or equivalently $$\al(\th)-\al(0) =  
\int_0^\th \ka|c'|\ud \th\,.$$ Choosing a different value for $\al(0)$ results in a 
rotation of the curve.  From the turning angle $\al$ and the arclength $|c'|$ one can reconstruct  
the curve $c$ up to a translation by integration: $c(\th)-c(0)=\int_0^{\th}e^{i\al}|c'|\ud \th$.
\end{proof}

\subsection{A motivation for this metric}\label{Sec:metric1:motivation}
The choice of the factor $\ka^{-3/2}$ in front of the term $\langle D_s^2 h, n\rangle^2$ in 
\eqref{metric1} appears to be arbitrary. One possibility to construct a second order Sobolev type 
metric on $\on{Imm}(M,\R^2)$ as the pullback of the flat $G^{L^2}$-metric, is to use an $R$-transform, that has a component of the form 
$R^j(c) = \sqrt{|c'|}f(\ka)$, where $f : \R \to \R$ is some smooth function. The derivative of 
$R^j$ with respect to $c$ is     
\[
dR^j(c)h = \sqrt{|c'|} \left( f'(\ka) \langle D_c^2 h, n \rangle 
- 2\ka f'(\ka) \langle D_c h, v \rangle + \tfrac 12 f(\ka) \langle D_c h, v \rangle \right)\,.
\]
The pullback metric would then contain a term
\[
\int_{S^1} \left( f'(\ka) \langle D_c^2 h, n \rangle - 2\ka f'(\ka) \langle D_c h, v \rangle 
+ \tfrac 12 f(\ka) \langle D_c h, v \rangle \right)^2 \ud s\,.
\]
In order to avoid cross-derivatives in the metric, the function $f(\ka)$ needs to satisfy
\[
\frac 12 f(\ka) = 2\ka f'(\ka)\,.
\]
Solutions to this ODE are given by
\[
f(\ka) = 
\begin{cases} 
C\ka^{1/4},&\ka > 0 \\
-C(-\ka)^{1/4},&\ka < 0
\end{cases}\,,
\]
with $C \in \R$ and the corresponding $R$-transform with $R^j(c) = 4 \sqrt{|c'|} \ka^{1/4}$ induces 
the following term in the metric 
\[
\int_{S^1} \ka^{-3/2} \langle D_c^2 h, n \rangle^2 \ud s\,.
\]
Thus the factor $\ka^{-3/2}$ is the unique choice to obtain a second order Sobolev type metric, 
which is flat on the space $\on{Imm}_{\on{conv}}(M,\R^2)/\on{Mot}$.

\subsection{The space of open curves}\label{Sec:metric1:open}
The image of the $R$-transform on the space $\on{Imm}_{\on{conv}}([0,2\pi],\R^2)$ of 
open convex curves is the set of all $\R_{>0}^{\;\,2}$-valued 
functions, which is an open set in $C^\infty([0,2\pi], \R^2)$.
\begin{theorem}\label{Thm:Rmapopen:metric1}
The $R$-transform
\[
R\,:\, \on{Imm}_{\on{conv}}([0,2\pi], \R^2)/\on{Mot}\to 
C^\infty([0,2\pi], \R_{>0}^{\;\,2})\underset{\on{open}}{\subset}C^\infty([0,2\pi], \R^2)
\]
is a diffeomorphism and its inverse is given by
\begin{equation}
\label{eq:m1_r_inv}
R\i : \left\{ \begin{array}{ccc}
\on{im}(R)_{\on{op}} & \to & \on{Imm}_{\on{conv}}([0,2\pi], \R^2)/\on{Mot} \\  
q & \mapsto & 2^{-2}\int_0^\th q_1^2 \on{exp}\left(i\int_0^{\si} 2^{-6}q_1^{-2}q_2^4 \ud \ta \right)\ud \th
\end{array} \right. \,.
\end{equation}
The space $(\on{Imm}_{\on{conv}}([0,2\pi], \R^2)/\on{Mot}, G)$ is a flat and
geodesically convex Riemannian manifold. 
\end{theorem}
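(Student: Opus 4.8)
The plan is to produce the inverse map explicitly and read off all four claims from it, using that $R$ is already known to be an isometry onto its image by Theorem~\ref{thm:m1_rmap}. First I would identify the image. Writing $q=R(c)=(q_1,q_2)$, the defining formula \eqref{m1_rmap} gives $q_1=2\sqrt{|c'|}$ and $q_2=4\ka^{1/4}\sqrt{|c'|}$, so both components are strictly positive because $c$ is an immersion ($|c'|>0$) and $c$ is convex ($\ka>0$). Hence $\on{im}(R)\subseteq C^\infty([0,2\pi],\R_{>0}^{\;\,2})$, and this target is an open subset of $C^\infty([0,2\pi],\R^2)$ for the $C^\infty$-topology, since on the compact interval a function with values in $\R_{>0}^{\;\,2}$ is bounded away from the boundary and stays so under small perturbations.

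Next I would invert the formula pointwise. Solving $q_1=2\sqrt{|c'|}$ and $q_2=4\ka^{1/4}\sqrt{|c'|}$ yields
\[
|c'|=2^{-2}q_1^2,\qquad \ka=2^{-4}q_1^{-4}q_2^4\,.
\]
To reconstruct the curve I would use that the turning angle satisfies $D_s\al=\ka$, i.e. $\al'=\ka|c'|=2^{-6}q_1^{-2}q_2^4$, together with the $\on{Mot}$-normalization $\al(0)=0$, giving $\al(\th)=\int_0^\th 2^{-6}q_1^{-2}q_2^4\ud\ta$. Integrating $c'=e^{i\al}|c'|$ with $c(0)=0$ then reproduces exactly the asserted formula \eqref{eq:m1_r_inv}. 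This map is manifestly well defined for every $q$ with positive components, so $C^\infty([0,2\pi],\R_{>0}^{\;\,2})\subseteq\on{im}(R)$; combined with the previous inclusion, the image is precisely $C^\infty([0,2\pi],\R_{>0}^{\;\,2})$, and $R\i$ as displayed is a genuine two-sided inverse. Smoothness of $R$ is clear from \eqref{m1_rmap}, and smoothness of $R\i$ follows since it is assembled from smooth parameter-dependent integrals of smooth functions; hence $R$ is a diffeomorphism.

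Finally I would deduce the geometric statements. Since $R$ is an isometry onto the open subset $C^\infty([0,2\pi],\R_{>0}^{\;\,2})$ of the flat space $(C^\infty([0,2\pi],\R^2),G^{L^2})$, and flatness is a local (hence open-hereditary) property, the domain $(\on{Imm}_{\on{conv}}([0,2\pi],\R^2)/\on{Mot},G)$ is flat. For geodesic convexity I would use that the $G^{L^2}$-metric is independent of the foot point, so its geodesics are the affine segments $t\mapsto (1-t)q^0+tq^1$. Given $q^0,q^1$ with values in the convex set $\R_{>0}^{\;\,2}$, the segment has values $(1-t)q^0(\th)+tq^1(\th)\in\R_{>0}^{\;\,2}$ for all $\th$ and $t\in[0,1]$, hence remains in $\on{im}(R)$; thus $\on{im}(R)$ is geodesically convex, and transporting back through the isometry $R$ shows the domain is as well.

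I expect the only genuinely delicate point to be bookkeeping: verifying that the constants $2^{-2}$ and $2^{-6}$ in \eqref{eq:m1_r_inv} emerge correctly from the chain $|c'|,\ka\mapsto\al'\mapsto c$, and checking that the reconstruction respects the chosen $\on{Mot}$-section (the normalizations $c(0)=0$ and $\al(0)=0$) rather than merely recovering $c$ up to a Euclidean motion. The flatness and convexity conclusions are then immediate consequences of the isometry property and the pointwise convexity of $\R_{>0}^{\;\,2}$.
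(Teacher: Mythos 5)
Your proposal is correct and follows essentially the same route as the paper: invert the transform pointwise via $|c'|=2^{-2}q_1^2$, $\ka=(\tfrac12 q_1^{-1}q_2)^4$, $\al'=2^{-6}q_1^{-2}q_2^4$, reconstruct the curve by integrating $e^{i\al}|c'|$, then deduce flatness from the isometry into the flat $L^2$-space and geodesic convexity from the convexity of $\R_{>0}^{\;2}$ (the paper phrases these last two steps via Thm.~\ref{thm:ebin1970} and the linear interpolation formula, which is the same argument). Your extra care about pinning down the $\on{Mot}$-section via $c(0)=0$, $\al(0)=0$ is consistent with the paper's identification of the quotient and is a harmless sharpening of its "determined up to translations and rotations" phrasing.
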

\begin{proof}
It is shown in Thm.~\ref{thm:m1_rmap} that the $R$-transform is injective. The surjectivity will 
follow directly from the inversion formula. For the inversion formula note that we can reconstruct 
\begin{align*}
|c'| &= 2^{-2}q_1^2 &
\ka &= (\tfrac 12 q_1^{-1} q_2)^4 &
\al' &= 2^{-6} q_1^{-2} q_2^4
\end{align*}
and that
\[
c(\th) = c(0) + \int_0^\th \exp\left(i \al(0)\right)\exp\left(i\int_0^\si \al(\ta) \ud \ta\right) \ud \si\,.
\]
Since the initial conditions $c(0)$ and $\al(0)$ are unspecified, this determines the curve up to 
translations and rotations. The flatness follows from Thm.~\ref{thm:ebin1970}, since $\R_{>0}^2$ is 
flat. Given two curves $c_0$ and $c_1$, the minimizing geodesic connecting them is given by  
\[
c(t,\th) = R\i\left( tR(c_1) + (1-t)R(c_0) \right)(\th)\,,
\]
thus showing that $\on{Imm}_{\on{conv}}([0,2\pi], \R^2)/\on{Mot}$ is geodesically convex.
\end{proof}

\begin{remark*}
Since $\R_{>0}^{\;\,2}$ is geodesically incomplete, the same is true for the space 
$\on{Imm}_{\on{conv}}([0,2\pi], \R^2)/\on{Mot}$. A geodesic will leave the space, when it fails to 
be an immersion, i.e., $|c'(t,\th)| = 0$ for some $(t,\th)$, or when it stops being convex, i.e., 
$\ka(t,\th) = 0$. While the term $\ka^{-3/2} \langle D_s^2 h, n \rangle^2$ in the metric penalizes 
a curve from straightening out, it is not strong enough to prevent it.    
\end{remark*}

The $R$-transform allows us to give explicit formulas for geodesics and for the geodesic distance.
\begin{theorem}
\label{thm:m1_dist}
Given two curves $c_0, c_1 \in \on{Imm}_{\on{conv}}([0,2\pi], \R^2)/\on{Mot}$ the unique geodesic 
connecting them is given by 
\[
c(t,\th) = R\i\left( tR(c_1) + (1-t)R(c_0) \right)(\th)\,,
\]
and their geodesic distance is
\[
\on{dist}^G_{\on{op}}(c_0, c_1)^2 = 
\int_0^{2\pi} 16\left(\sqrt{|c_1'|}\ka_1^{1/4}-\sqrt{|c_0'|}\ka_0^{1/4}\right)^2
+4 \left(\sqrt{|c_1'|}-\sqrt{|c_0'|}\right)^2\ud \th\,.
\]
\end{theorem}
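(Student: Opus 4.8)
The plan is to exploit the isometry established in Theorem~\ref{thm:m1_rmap} together with the flatness and geodesic convexity from Theorem~\ref{Thm:Rmapopen:metric1}. Since $R$ is an isometry onto $C^\infty([0,2\pi],\R_{>0}^{\;\,2})$ equipped with the flat $L^2$-metric $G^{L^2}$, and since its image is a \emph{convex} open subset of the flat space $(C^\infty([0,2\pi],\R^2),G^{L^2})$, the geodesics downstairs are simply straight lines, and distances are computed by the Euclidean norm in $L^2$. So the whole statement reduces to two facts about the flat target: that the straight segment $t\mapsto tR(c_1)+(1-t)R(c_0)$ is the unique minimizing geodesic between its endpoints, and that the $L^2$-distance between $R(c_0)$ and $R(c_1)$ has the stated closed form.

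First I would verify that the straight-line path lies entirely in the image $\on{im}(R)_{\on{op}}=C^\infty([0,2\pi],\R_{>0}^{\;\,2})$: this is immediate since each component $R^j$ is pointwise positive and the set of positive-valued functions is convex, so any convex combination of two such functions is again pointwise positive. Hence the segment never leaves the domain, and because $G^{L^2}$ is flat with vanishing Christoffel symbols, the segment is a geodesic; convexity of the domain guarantees it is the unique \emph{minimizing} one (straight lines minimize $L^2$-distance in a convex subset of a pre-Hilbert space). Pulling this back through $R\i$ gives the geodesic formula $c(t,\th)=R\i(tR(c_1)+(1-t)R(c_0))(\th)$, and uniqueness transfers because $R$ is a diffeomorphism by Theorem~\ref{Thm:Rmapopen:metric1}.

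For the distance, I would compute $\on{dist}^G_{\on{op}}(c_0,c_1)^2 = \|R(c_1)-R(c_0)\|_{L^2}^2$, using that an isometry preserves geodesic distance and that in the flat convex image the distance between two points is exactly the $L^2$-norm of their difference. Recalling from \eqref{m1_rmap} that $R(c)=\sqrt{|c'|}\,(2,4\ka^{1/4})$, the two components of $R(c_1)-R(c_0)$ are $2(\sqrt{|c_1'|}-\sqrt{|c_0'|})$ and $4(\sqrt{|c_1'|}\ka_1^{1/4}-\sqrt{|c_0'|}\ka_0^{1/4})$. Squaring, integrating against $\ud\th$, and collecting the factors $4$ and $16$ yields exactly the claimed formula.

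The only genuine subtlety, and the step I would be most careful about, is the uniqueness of the minimizing geodesic in the weak-Riemannian (pre-Hilbert) setting: because $G^{L^2}$ is only a \emph{weak} metric and the space is incomplete, one cannot invoke standard Hopf--Rinow-type theorems. Here, however, the image is an \emph{open convex} subset of a flat pre-Hilbert space, so the argument is elementary: for any curve joining the two endpoints, its length dominates the $L^2$-distance of the endpoints by the usual triangle/Cauchy--Schwarz estimate (valid componentwise in the flat metric), with equality precisely for the straight segment. I do not expect any analytic obstruction beyond keeping track of the fact that $R$ is a diffeomorphism onto this convex open set, which is already supplied by Theorem~\ref{Thm:Rmapopen:metric1}; the remainder is bookkeeping of the constants.
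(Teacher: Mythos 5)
Your proposal is correct and follows essentially the same route as the paper: the paper proves this theorem simply by citing Thm.~\ref{Thm:Rmapopen:metric1}, whose proof contains exactly your argument (the image is the convex open set $C^\infty([0,2\pi],\R_{>0}^{\;\,2})$ in the flat $L^2$ pre-Hilbert space, straight segments are the unique minimizing geodesics there, and everything pulls back through the isometry $R$), with the distance formula then being the $L^2$-norm computation you carry out. Your explicit treatment of uniqueness of minimizers via the length estimate in the flat weak-Riemannian setting is a detail the paper leaves implicit, but it is not a different method.
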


\begin{proof} This follows from Thm.~\ref{Thm:Rmapopen:metric1}.
\end{proof}
\begin{remark*}
The formula for the geodesic distance implies in particular that the following functions are Lipschitz continuous
\begin{align*}
\sqrt{|c'|}:
\left(\on{Imm}_{\on{conv}}([0,2\pi],\R^2)/\on{Mot},\on{dist}^G_{\on{op}}\right) 
&\to L^2([0,2\pi], \R) \\
\ka^{1/4}\sqrt{|c'|}:
\left(\on{Imm}_{\on{conv}}([0,2\pi],\R^2)/\on{Mot},\on{dist}^G_{\on{op}}\right) 
&\to L^2([0,2\pi], \R)\,.
\end{align*}
From the Lipschitz continuity of $\sqrt{|c'|}$ and the identity $\sqrt{\ell_c} = \| \sqrt{|c'|} \|_{L^2}$ we can then conclude that
\[
\sqrt{\ell_c} : \left(\on{Imm}_{\on{conv}}([0,2\pi],\R^2)/\on{Mot},\on{dist}^G_{\on{op}}\right) 
\to \R
\]
is also Lipschitz continuous.
An immediate consequence of this is the following lower bound for the geodesic distance:
$$\on{dist}^G_{\on{op}}(c_0,c_1)\geq 2\left|\sqrt{\ell_{c_1}}-\sqrt{\ell_{c_0}}\right|\,.$$
\end{remark*}

\subsection{The space of closed curves}\label{Sec:metric1:closed} 
When we restrict our attention to the space $\on{Imm}_{\on{conv}}(S^1,\R^2)/\on{Mot}$ of closed, 
strictly convex curves, the image of the $R$-transform is no longer an open subset of 
$C^{\infty}(S^1,\R^2)$.

Define the following function $H_{\on{cl}}: \on{Imm}_{\on{conv}}(S^1,\R^2)/\on{Mot} \to \R^2$, which measures 
how far away the preimage of $q$ is from being a closed curve:
\begin{align*}
H_{\on{cl}}(q)&=\int_0^{2\pi} q_1(\th)^2 \exp(i \al(q)(\th)) \ud \th\,, &
\al(q)(\th) &= 2^{-6} \int_0^\th q_1(\si)^{-2} q_2(\si)^4 \ud \si\,.
\end{align*}
The gradients of the components of $H_{\on{cl}}$ with respect to the $G^{L^2}$-metric are
\begin{subequations} \label{eq:m1_grad_hcl}
\begin{alignat}{1}
\on{grad}H_{\on{cl}}^1&=
\left(
\begin{array}{c}
2 q_1 \on{cos}\al(q)+2^{-5}  q_2^4q_1^{-3}\int_\th^{2\pi} q_1^2 \on{sin}\al(q)\ud\si \\
-2^{-4}  q_2^3q_1^{-2}  \int_\th^{2\pi} q_1^2 \on{sin}\al(q)\ud\si \\
\end{array}
\right)
\\
\on{grad}H^2_{\on{cl}}&=\left(
\begin{array}{c}
2 q_1 \on{sin}\al(q)-2^{-5}  q_2^4q_1^{-3}\int_\th^{2\pi} q_1^2 \on{cos}\al(q)\ud\si \\
2^{-4}  q_2^3q_1^{-2}  \int_\th^{2\pi} q_1^2 \on{cos}\al(q)\ud\si \\
\end{array}
\right)\,.
\end{alignat}
\end{subequations} 
The function $H_{\on{cl}}$ characterizes the image of the $R$-transform.

\begin{lemma}
\label{lem:m1_imcl_submf}
The image of $\on{Imm}_{\on{conv}}(S^1,\R^2)/\on{Mot}$ under the $R$-transform is 
$$\on{im}(R)_{\on{cl}}=\left\{q\in C^{\infty}(S^1,\R_{>0}^2): H_{\on{cl}}(q)=0  \right\}\,.$$
It is a splitting submanifold of $C^{\infty}(S^1,\R_{>0}^2)$ of codimension 2.

For $q \in \on{im}(R)_{\on{cl}}$ the orthogonal complement of $T_q \on{im}(R)_{\on{cl}}$ with 
respect to the $G^{L^2}$-metric is spanned by 
$\on{grad}H_{\on{cl}}^1(q), \on{grad}H_{\on{cl}}^2(q)$, given in \eqref{eq:m1_grad_hcl}.  
\end{lemma}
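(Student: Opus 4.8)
The plan is to reduce all three assertions to a single fact: that $H_{\on{cl}}:C^\infty(S^1,\R_{>0}^2)\to\R^2$ is a smooth submersion whose two component gradients are everywhere linearly independent. Granting this, the preimage theorem in convenient calculus makes $\{H_{\on{cl}}=0\}$ a splitting submanifold of codimension $2$ with tangent space $\ker dH_{\on{cl}}(q)$, and the gradients—being $L^2$-orthogonal to that kernel by construction—span the $G^{L^2}$-normal space. I would first dispose of the inclusion $\on{im}(R)_{\on{cl}}\subseteq\{H_{\on{cl}}=0\}$, which is immediate: for $q=R(c)$ the inverse formula \eqref{eq:m1_r_inv} reconstructs $c$ modulo $\on{Mot}$, and since $R\i(q)(2\pi)-R\i(q)(0)=2^{-2}H_{\on{cl}}(q)$, the curve closes exactly when $H_{\on{cl}}(q)=0$. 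For the reverse inclusion I would set $c=R\i(q)$ and read off from the reconstruction that $\ka=(\tfrac12 q_1\i q_2)^4>0$, so $c$ is convex, and that $c(2\pi)=c(0)$, so $c$ closes as a point set; the remaining check, that $c$ is genuinely a smooth closed curve, is where the delicate point sits (see below).

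Next I would compute the differential. Differentiating under the integral gives $dH_{\on{cl}}(q).h=\int_0^{2\pi}\big(2q_1h_1+iq_1^2\,d\al(q).h\big)e^{i\al(q)}\ud\th$, where $d\al(q).h(\th)=2^{-6}\int_0^\th(-2q_1^{-3}q_2^4h_1+4q_1^{-2}q_2^3h_2)\ud\si$. Interchanging the order of integration in the $d\al$-term (Fubini) turns the inner $\int_0^\th$ into the tail integrals $I_s(\th)=\int_\th^{2\pi}q_1^2\sin\al\,\ud\si$ and $I_c(\th)=\int_\th^{2\pi}q_1^2\cos\al\,\ud\si$; splitting into real and imaginary parts reproduces exactly the gradients $\on{grad}H^1_{\on{cl}},\on{grad}H^2_{\on{cl}}$ of \eqref{eq:m1_grad_hcl}, so that $dH^j_{\on{cl}}(q).h=\int_0^{2\pi}\langle\on{grad}H^j_{\on{cl}},h\rangle\ud\th$. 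This is routine bookkeeping, the only subtlety being to keep track of the factors $2^{-6}\cdot2=2^{-5}$ and $2^{-6}\cdot4=2^{-4}$ that appear in the two entries.

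The heart of the argument is the linear independence of these gradients at every $q$. Assume $\la_1\on{grad}H^1_{\on{cl}}+\la_2\on{grad}H^2_{\on{cl}}=0$. The second components read $2^{-4}q_2^3q_1^{-2}(\la_2 I_c-\la_1 I_s)=0$, and since $q_1,q_2>0$ this forces $\la_1 I_s=\la_2 I_c$ identically. Feeding this back into the first components collapses the tail terms and leaves $2q_1(\la_1\cos\al(\th)+\la_2\sin\al(\th))=0$, hence $\la_1\cos\al(\th)+\la_2\sin\al(\th)=0$ for all $\th$ because $q_1>0$. Since $\al(q)$ has derivative $2^{-6}q_1^{-2}q_2^4>0$ it is strictly increasing, so the unit vectors $(\cos\al(\th),\sin\al(\th))$ sweep out a nondegenerate arc and therefore span $\R^2$; this forces $\la_1=\la_2=0$. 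Thus $dH_{\on{cl}}(q)$ is surjective for every $q\in C^\infty(S^1,\R_{>0}^2)$, the submanifold and codimension statements follow, and the finite-dimensional span of the two independent gradients furnishes both the $G^{L^2}$-orthogonal complement and the required smooth splitting.

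I expect the genuine obstacle to be the reverse inclusion in the first assertion rather than any of the above. The condition $H_{\on{cl}}(q)=0$ only controls that $c=R\i(q)$ closes up as a point set; for $c$ to be a \emph{smooth} closed curve one additionally needs its unit tangent $v=e^{i\al(q)}$ to be periodic, i.e.\ the total turning $\al(q)(2\pi)=\int_{S^1}\ka\ud s$ to be an integer multiple of $2\pi$, which is the Hopf turning-number condition and is not, a priori, implied by $H_{\on{cl}}(q)=0$ alone. The way I would handle this is to use that the rotation number is a homotopy invariant, hence locally constant, on $\on{Imm}_{\on{conv}}(S^1,\R^2)$: on the connected component of simple convex curves one has $\int_{S^1}\ka\ud s=2\pi$ throughout, so the reconstruction $R\i(q)$ automatically extends periodically and lands back in $\on{Imm}_{\on{conv}}(S^1,\R^2)/\on{Mot}$. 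Pinning down this periodicity of the reconstructed tangent—so that surjectivity onto the level set is legitimate—is the step that requires real care, whereas the gradient computation and the submersion argument are essentially mechanical once set up.
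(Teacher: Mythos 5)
Your computation of $dH_{\on{cl}}$, the Fubini step recovering the gradients \eqref{eq:m1_grad_hcl}, and in particular the linear-independence argument (the second components force $\la_1 I_s=\la_2 I_c$ with $I_s(\th)=\int_\th^{2\pi}q_1^2\sin\al\ud\si$, $I_c(\th)=\int_\th^{2\pi}q_1^2\cos\al\ud\si$; the first components then give $\la_1\cos\al+\la_2\sin\al\equiv 0$, and strict monotonicity of $\al$ kills $\la_1,\la_2$) are all correct --- indeed more complete than the paper's own proof, which merely asserts the independence of $\on{grad}H^1_{\on{cl}},\on{grad}H^2_{\on{cl}}$ and otherwise builds the same splitting chart (via the affine map $A_{q_0}$ and the implicit function theorem with parameters) that your ``preimage theorem'' step invokes.

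The genuine gap is exactly where you suspected it, but your proposed repair fails, and in fact the claimed equality cannot be repaired because it is false. Local constancy of the rotation index is a statement about $\on{Imm}_{\on{conv}}(S^1,\R^2)$, i.e.\ about curves already known to be smoothly closed; it gives no information about an arbitrary $q$ with $H_{\on{cl}}(q)=0$, and asserting that ``the reconstruction $R\i(q)$ automatically extends periodically'' presupposes $q\in\on{im}(R)_{\on{cl}}$, which is the thing to be proved. Concretely, let $\rho(\th)=1-\tfrac59\cos\th>0$ and set $q_1=\sqrt{\rho}$, $q_2=(96\rho)^{1/4}$, so that $q\in C^\infty(S^1,\R_{>0}^2)$ and $\al(q)(\th)=2^{-6}\int_0^\th q_1^{-2}q_2^4\ud\si=\tfrac32\th$. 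Since $\int_0^{2\pi}e^{3i\th/2}\ud\th=\tfrac43 i$ and $\int_0^{2\pi}\cos\th\, e^{3i\th/2}\ud\th=\tfrac{12}5 i$, one gets $H_{\on{cl}}(q)=\int_0^{2\pi}\rho\, e^{3i\th/2}\ud\th=\tfrac43 i-\tfrac59\cdot\tfrac{12}5 i=0$; yet the total turning is $\al(q)(2\pi)=3\pi\notin 2\pi\mathbb Z$, so $c=R\i(q)$ satisfies $c'(2\pi)=-c'(0)$ and is not a smooth closed curve. Hence $\on{im}(R)_{\on{cl}}$ is a \emph{proper} subset of $\{H_{\on{cl}}=0\}$: the correct characterization needs the additional constraint $2^{-6}\int_0^{2\pi}q_1^{-2}q_2^4\ud\th\in 2\pi\mathbb Z_{>0}$, whose $L^2$-gradient $\bigl(-2^{-5}q_1^{-3}q_2^4,\,2^{-4}q_1^{-2}q_2^3\bigr)$ is independent of the two $H_{\on{cl}}$-gradients (run your own argument and evaluate the second components at $\th=2\pi$, where $I_s=I_c=0$). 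So the image is a disjoint union of splitting submanifolds of codimension $3$, with a $3$-dimensional $G^{L^2}$-normal space, not codimension $2$ as the lemma states. Note that the paper's proof has the same defect --- it simply declares that $R\i(q)$ is closed iff $H_{\on{cl}}(q)=0$ --- so you located a real flaw in the statement; but your fix does not close it, and a correct write-up must instead amend the lemma along the lines above.
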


\begin{proof}
To characterize the image $\on{im}(R)_{\on{cl}}$ we recall the inversion formula \eqref{eq:m1_r_inv} from Thm.~
\ref{Thm:Rmapopen:metric1}. For $q\in C^{\infty}(\R_{>0}\times \R)$ it follows immediately that 
$R\i(q)$ is a closed curve if and only if $H_{\on{cl}}(q)=0$. 

We now show that $\on{im}(R)_{\on{cl}}$ is a splitting submanifold. 
Fix $q_0\in \on{im}(R)_{\on{cl}}$ and define the codimension 2 closed linear subspace
$$
T_{q_0}(\on{im}(R)_{\on{cl}}) := \big(\mathbb R.\on{grad}H_{\on{cl}}^1(q_0) + 
\mathbb R.\on{grad}H_{\on{cl}}^2(q_0)\big)^{\bot,G^{L^2}} \subset C^\infty(S^1,\mathbb R^2).
$$
We consider the affine isomorphism  
\[
A_{q_0} : \left\{ \begin{array}{ccc}
T_{q_0}(\on{im}(R)_{\on{cl}})\x \mathbb R^2 &\to &C^\infty(S^1,\mathbb R^2) \\
(h,x,y) &\mapsto & q_0 +  x.\on{grad}H_{\on{cl}}^1(q_0) + y.\on{grad}H_{\on{cl}}^2(q_0)
\end{array} \right. \,.
\]
Then the derivative $D_2(H_{\on{cl}}\o A_{q_0})(h,x,y)$ of the smooth mapping 
$$
H_{\on{cl}}\o A_{q_0}: T_{q_0}(\on{im}(R)_{\on{cl}})\x \mathbb R^2 \to \mathbb R^2
$$
is bounded and invertible for all $(h,x,y)$ near $(0,0,0)$, and thus 
$\on{im}(R)_{\on{cl}} = H_{\on{cl}}\i(0)$ is a smooth splitting submanifold by the 
implicit function theorem with parameters in a convenient vector space; see \cite[Lem. 1]{Teichmann01}, 
\cite{Hiltunen99} or \cite{Gloeckner06}.

Since $\on{grad}H_{\on{cl}}^1(q)$ and $\on{grad}H_{\on{cl}}^2(q)$ are linearly independent in 
$C^\infty(S^1,\mathbb R^2)$ for any $q\in C^\infty(S^1,\mathbb R_{>0}{}^2)$, they form a basis of 
$(T_q \on{im}(R)_{\on{cl}})^\perp$.  
\end{proof}

\begin{remark*}
Since the tangent space $T_q \on{im}(R)_{\on{cl}}$ has codimension 2, the orthogonal projection 
$\on{Proj}^{\on{im}} : TC^\infty(S^1,\R^2) \upharpoonright \on{im}(R)_{\on{cl}} \to T \on{im}(R)_{\on{cl}}$ 
is given by  
\begin{equation}
\label{eq:m1_orth_proj}
\on{Proj}^{\on{im}}(q).h = h - \langle h, w^1(q) \rangle w^1(q) -  \langle h, w^2(q) \rangle w^2(q)\,,
\end{equation}
where $w^1(q), w^2(q)$ is an orthonormal basis of $(T_q \on{im}(R)_{\on{cl}})^\perp$. In particular 
$\on{Proj}^{\on{im}}$ is smooth. 
\end{remark*}

Using the orthonormal basis $w^1(q), w^2(q)$ of $\on{im}(R)_{\on{cl}}^{\bot}$ and the Gau\ss{} 
equation one can calculate the curvature of $\on{im}(R)_{\on{cl}}$ and hence also of 
$\on{Imm}_{\on{conv}}(S^1,\R^2)/\on{Mot}$. This has been done for a different metric in \cite{Bauer2012b_preprint}.  

The geodesic equation on $\on{Imm}_{\on{conv}}(S^1,\R^2)/\on{Mot}$ is well-posed in appropriate 
Sobolev completions. We refer to Sect.~\ref{Sec:wp} and in particular to Sect.~\ref{Sec:wp_1_and_2} 
for a detailed discussion and proofs. The spaces $\on{Imm}^{j,k}(S^1,\R^2)$ are defined in 
\eqref{eq:custom_sob_scale}.   

\begin{theorem}
For $k\geq 2$ and initial conditions $(c_0, u_0) \in T\on{Imm}^{k+1,k+2}(S^1,\R^2)$, the 
geodesic equation has solutions in $\on{Imm}^{j+1,j+2}(S^1,\R^2)$ for each $2 \leq j \leq k$. The 
solutions depend $C^\infty$ on $t$ and the initial conditions and the domain of existence is 
independent of $j$.   

In particular for smooth initial conditions $(c_0, u_0) \in T\on{Imm}(S^1,\R^2)$ the geodesic 
equation has smooth solutions. 
\end{theorem}

%


\begin{remark*}
Since $\on{im}(R)_{\on{cl}} \subset \on{im}(R)_{\on{op}}$, the geodesic distance functions satisfy
\[
\on{dist}_{\on{op}}^G(c_0, c_1) \leq \on{dist}_{\on{cl}}^G(c_0, c_1)\,.
\]
Thus, the remark after Thm.~\ref{thm:m1_dist} also holds for the space 
$\on{Imm}_{\on{conv}}(S^1,\R^2)/\on{Mot}$ of closed curves, i.e., the functions $\sqrt{|c'|}$, 
$\ka^{1/4} \sqrt{|c'|}$ and $\sqrt{\ell_c}$ are Lipschitz continuous with respect to the geodesic 
distance.   
\end{remark*}

\section{The second metric}
\label{Sec:metric2}

\subsection{The metric and its geodesic equation}
The metric studied in the previous section is defined only for strictly convex curves. Consider the 
following related metric 
\begin{equation}
\label{metric2}
G_c(h,k) = \int_M \langle D_s h, v\rangle\langle D_s k, v\rangle  
+ \langle D_s^2 h, n \rangle \langle D_s^2 k, n \rangle \ud s\,,
\end{equation}
with $c\in \on{Imm}(M,\R^2)$ and $h,k\in T_c\on{Imm}(M,\R^2)$. This metric is defined for all curves. After integrating the expression of the metric by parts
\begin{align*}
G_c(h,k) &= \int_{S^1} \langle D_s h, v\rangle\langle D_s k, v\rangle  
+ \langle D_s^2 h, n \rangle \langle D_s^2 k, n \rangle \ud s 
\\
&=\int_{S^1}  -\langle  k,D_s\left(\langle D_s h, v\rangle v\right)\rangle  
+ \langle  k, D_s^2 \left(\langle D_s^2 h, n \rangle n \right)\rangle  \ud s\,.
\end{align*}
we obtain the associated operator field $L$ of the metric on the space $\on{Imm}(S^1,\R^2)$ of closed curves,
\[
L_c h = D_s^2 \left(\langle D_s^2 h, n \rangle n \right)
 -D_s\left(\langle D_s h, v\rangle v\right)\,.
\]
The null space of $G_c$ is the same as in Sect.~\ref{Sec:metric1}.

\begin{lemma}\label{lem:kernel:metric2}
The null space of the bilinear form $G_c(.,.)$ is spanned by constant vector fields and 
infinitesimal rotations, i.e., 
\begin{equation*}\label{kernel_metric2}
 \on{ker}(G_c)=\left\{h\in T_c\on{Imm}(M,\R^2): h=a + b.Jc,\, a\in\R^2, b \in \R\right\}. 
\end{equation*}
\end{lemma}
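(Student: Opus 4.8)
The plan is to reproduce the argument of Lemma~\ref{lem:kernel:metric1} almost verbatim: the metric \eqref{metric2} differs from \eqref{metric1} only by dropping the strictly positive weight $\ka^{-3/2}$ in front of $\langle D_s^2 h,n\rangle^2$, and since that factor plays no role in determining which tangent vectors make the quadratic form vanish, the kernel is unchanged. The one genuine bonus is that, because we never divide by $\ka$, the whole argument is now valid on all of $\on{Imm}(M,\R^2)$ rather than only on the convex subset.

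First I would observe that
$$G_c(h,h)=\int_M \langle D_s h,v\rangle^2+\langle D_s^2 h,n\rangle^2\ud s$$
is a sum of two nonnegative integrands, so $G_c$ is positive semidefinite and its radical coincides with the zero set of the associated quadratic form. Hence $h\in\on{ker}(G_c)$ iff $G_c(h,h)=0$, which by continuity of the integrands forces the pointwise conditions $\langle D_s h,v\rangle\equiv 0$ and $\langle D_s^2 h,n\rangle\equiv 0$ on $M$. Conversely, if $h$ satisfies these two conditions then each product in the bilinear expression for $G_c(h,k)$ contains a vanishing factor, so $G_c(h,k)=0$ for every $k$; this shows the kernel is exactly the set of $h$ satisfying both scalar conditions.

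It then remains to integrate the resulting pair of ODEs. From $\langle D_s h,v\rangle=0$ and the fact that $(v,n)$ is an orthonormal frame we get $D_s h=b\,n$ for some $b\in C^\infty(M,\R)$. Applying $D_s$ and using the Frenet equation $D_s n=-\ka v$ gives $D_s^2 h=(D_s b)\,n-b\ka\,v$, so the condition $\langle D_s^2 h,n\rangle=0$ reduces to $D_s b=0$, i.e.\ $b$ is a constant. Finally, rewriting $D_s h=b\,n=b\,Jv=bJ D_s c$ as $h'=bJc'$ and taking antiderivatives (with $J$ constant and linear) yields $h=a+b.Jc$ with $a\in\R^2$ a constant of integration, which is precisely the claimed description of $\on{ker}(G_c)$.

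There is essentially no hard step here; the only point worth double-checking is that no use of convexity or of the weight $\ka^{-3/2}$ sneaks in, so that the conclusion indeed holds for arbitrary, not necessarily convex, closed or open curves.
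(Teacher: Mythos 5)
Your proposal is correct and follows essentially the same route as the paper, which proves this lemma simply by pointing back to the proof of Lem.~\ref{lem:kernel:metric1}: positive semidefiniteness reduces the kernel to the zero set of the quadratic form, the pointwise conditions $\langle D_s h, v\rangle=0$ and $\langle D_s^2 h,n\rangle=0$ follow, and integrating gives $h=a+b.Jc$. Your write-up merely makes explicit two steps the paper leaves implicit (the Frenet computation $D_s^2h=(D_sb)n-b\ka v$ showing $b$ is constant, and the observation that dropping the weight $\ka^{-3/2}$ removes the convexity restriction), both of which are accurate.
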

\begin{proof}
See the proof of Lem.~\ref{lem:kernel:metric1}.
\end{proof}

It follows from this lemma that $G_c$ is a Riemannian metric on 
$\on{Imm}(M,\R^2)/\on{Mot}$. We will use Lem.~\ref{lem:geod_equa_general_metric} to calculate its 
geodesic equation.   

\begin{theorem}
On the manifold $\on{Imm}(M,\R^2)/\on{Mot}$ of plane parametrized curves modulo Euclidean motions 
$G_c(.,.)$ defines a weak Riemannian metric.   
For $M=S^1$ the geodesic equation is given by
\begin{align*}
p &=  L c_t\otimes \ud s=\left(D_s^2 \left(\langle D_s^2 c_t, n \rangle n \right)
-D_s\left(\langle D_s c_t, v\rangle v\right) \right)\otimes \ud s
\\
p_t &= \frac12 D_s\Big(\langle D_sc_t,v\rangle^2v -2\langle D_sc_t,n\rangle\langle D_sc_t,v\rangle n
- 2 D_s\big( \langle D_s c_t, n \rangle\langle D_s^2 c_t, n \rangle\big) v 
\\&\qquad\qquad\quad 
+2   \langle D_s^2 c_t, v \rangle\langle D_s^2 c_t, n \rangle n +3 \langle D_s^2 c_t, n \rangle^2 n \Big)\otimes \ud s
\,,
\end{align*}
with the additional constraint 
$$
c_t\in T_c(\on{Imm}(S^1,\R^2)/\on{Mot})\cong\{h\in C^{\infty}(M,\R^2): h(0)=0, \langle D_sh(0),n(0)\rangle=0\}\,.
$$
\end{theorem}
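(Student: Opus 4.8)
The plan is to follow the same route as in the proof of Theorem~\ref{thm:m1_geodesic_equation}, treating \eqref{metric2} as the everywhere-defined analogue of \eqref{metric1} obtained by dropping the $\ka^{-3/2}$ weight. The first assertion is immediate: by Lemma~\ref{lem:kernel:metric2} the kernel of the bilinear form $G_c$ on $\on{Imm}(M,\R^2)$ consists exactly of the constant vector fields and the infinitesimal rotations $b.Jc$, which is precisely the tangent space to the $\on{Mot}$-orbit through $c$. Hence $G$ is fibrewise positive definite on $\on{Imm}(M,\R^2)/\on{Mot}$ and descends to a smooth weak Riemannian metric there.

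For the geodesic equation I would invoke Lemma~\ref{lem:geod_equa_general_metric}, so that the whole task reduces to computing $H_c(h,h)$, defined by $D_{c,m}\big(G_c(h,h)\big)=\langle H_c(h,h),m\rangle_V$. First I would record the first variations of the two building blocks of the metric, holding $h$ fixed: using \eqref{var_v}, \eqref{var_n} together with $D_{c,m}(D_s f)=-\langle D_s m,v\rangle D_s f$ one gets
\[
D_{c,m}\langle D_s h,v\rangle = -\langle D_s m,v\rangle\langle D_s h,v\rangle+\langle D_s m,n\rangle\langle D_s h,n\rangle\,,
\]
and, applying the same rule twice,
\[
D_{c,m}\langle D_s^2 h,n\rangle = -2\langle D_s m,v\rangle\langle D_s^2 h,n\rangle-\langle D_s h,n\rangle D_s\langle D_s m,v\rangle-\langle D_s^2 h,v\rangle\langle D_s m,n\rangle\,.
\]
These are exactly the identities already used in the \eqref{metric1} computation, so no new variational formulae are needed.

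Next I would differentiate $G_c(h,h)=\int_M \langle D_s h,v\rangle^2+\langle D_s^2 h,n\rangle^2\,\ud s$, being careful to vary the arclength measure via \eqref{var_length}, which contributes an extra $\langle D_s m,v\rangle\,\ud s$ multiplying the whole integrand. Collecting the result by the three independent test expressions $\langle D_s m,v\rangle$, $\langle D_s m,n\rangle$ and $D_s\langle D_s m,v\rangle$ gives an integrand of the shape $A\langle D_s m,v\rangle+B\langle D_s m,n\rangle+C\,D_s\langle D_s m,v\rangle$ with $A,B,C$ explicit quadratic expressions in $\langle D_s h,v\rangle,\langle D_s h,n\rangle,\langle D_s^2 h,v\rangle,\langle D_s^2 h,n\rangle$. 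Moving all derivatives off $m$ by the arclength integration-by-parts identity $\int_M \phi\langle D_s m,X\rangle\,\ud s=-\int_M\langle D_s(\phi X),m\rangle\,\ud s$ and its scalar analogue then expresses $D_{c,m}(G_c(h,h))$ as $\int_M\langle H_c(h,h),m\rangle\,\ud s$ with $H_c(h,h)$ a single arclength derivative $D_s(\,\cdots)$. Reading off Lemma~\ref{lem:geod_equa_general_metric} with $p=\bar L_c(c_t)$ and $p_t=\tfrac12 H_c(c_t,c_t)$, and appending the constraint $c_t\in T_c(\on{Imm}(S^1,\R^2)/\on{Mot})$, yields the stated system.

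The computation produces $H_c$ only formally: it does not by itself verify that $\tfrac12 H_c(h,h)-(D_{c,h}L_c)(h)$ lies in the image of $\bar L_c$, which Lemma~\ref{lem:geod_equa_general_metric} requires for the geodesic equation to genuinely exist. As in Section~\ref{Sec:metric1}, I would not try to invert $\bar L_c$ directly; instead this will follow from the $R$-transform representation of \eqref{metric2} established later in this section, which realises the metric as a pullback of an $L^2$-type metric and, through Theorem~\ref{thm:ebin1970_proj}, supplies a smooth geodesic spray. The main labour is therefore purely the bookkeeping in the repeated integrations by parts; the place to check carefully is the fate of the $\langle D_s^2 h,n\rangle^2$ term that now survives precisely because the $\ka^{-3/2}$ weight of \eqref{metric1} is absent. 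Since this term originates from the coefficient $A$ of $\langle D_s m,v\rangle$, after integration by parts it should land in the $v$-direction, and getting its direction and sign right is the one genuinely delicate bookkeeping step.
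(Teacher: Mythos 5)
Your proposal is correct and follows the paper's own proof essentially verbatim: Lemma~\ref{lem:kernel:metric2} for the first assertion, Lemma~\ref{lem:geod_equa_general_metric} together with the same two variational identities already used for \eqref{metric1}, one round of integration by parts against the test expressions $\langle D_s m,v\rangle$, $\langle D_s m,n\rangle$, $D_s\langle D_s m,v\rangle$, and deferral of the existence question (invertibility of $\bar L_c$) to the $R$-transform representation via Thm.~\ref{thm:ebin1970_proj}, which is exactly what the paper does by referring back to the proof of Thm.~\ref{thm:m1_geodesic_equation}.

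One point deserves emphasis: your prediction for the delicate term is right, and it exposes an apparent typo in the printed statement. The variation of $\langle D_s^2 h,n\rangle^2\,\ud s$ contributes $-3\langle D_s^2h,n\rangle^2\langle D_s m,v\rangle$ (coefficient $-4$ from the variation of $\langle D_s^2h,n\rangle$ plus $+1$ from the measure), and a single integration by parts turns this into $\langle m, D_s\big(3\langle D_s^2h,n\rangle^2\,v\big)\rangle$; hence the last term inside the $\tfrac12 D_s(\cdots)$ in the formula for $p_t$ must be $3\langle D_s^2 c_t,n\rangle^2\,v$, not $3\langle D_s^2 c_t,n\rangle^2\,n$ as printed in the theorem and in the paper's own expression for $H_c(h,h)$. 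This is corroborated by the analogous computation for metric \eqref{metric3}, where the identical term appears (correctly) as $\tfrac32\langle D_s^2 c_t,n\rangle^2\,v$ inside $D_s(\cdots)$. So if you carry out the bookkeeping as described, you will obtain the corrected formula rather than the printed one; everything else in your plan matches the paper's argument.
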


\begin{remark*}
Similarily as in Sect.~\ref{Sec:metric1}, the operator $L_c: 
T_c\left(\on{Imm}(M,\R^2)/{\on{Mot}}\right)\to T_c\left(\on{Imm}(M,\R^2)/{\on{Mot}}\right)$  
is not an elliptic operator, since the highest derivative appears only in the normal direction. 
Again we cannot apply the well-posedness results from \cite{Michor2007} or \cite{Bauer2011b}. 
Instead we will show in Sects. 
\ref{Sec:metric2:open} and \ref{Sec:metric2:closed}, that the geodesic equation is locally 
well-posed both on the space of open and closed curves.     
\end{remark*}

\begin{proof}
To apply Lem.~\ref{lem:geod_equa_general_metric} we need to compute the $H_c$-gradient of the 
metric. Using the variational formulae from Sect.~\ref{var} we first calculate the variation of the 
metric  
\begin{align*}
& D_{c,m}(G_c(h,h))= \int_{S^1} -\langle D_s m, v \rangle\langle D_sh,v\rangle^2
+2\langle D_sm, n \rangle\langle D_sh,n\rangle\langle D_sh,v\rangle
\\
& \qquad- 2\langle D_s h, n \rangle\langle D_s^2 h, n \rangle D_s \left(\langle D_s m, v \rangle \right) 
- 2\langle D_s^2 h, v \rangle\langle D_s^2 h, n \rangle \langle D_s m, n \rangle 
\\
& \qquad - 3 \langle D_s^2 h, n \rangle^2\langle D_s m, v \rangle  \ud s\,,
\end{align*}
and we integrate to obtain
\begin{align*}
H_c(h,h)&= D_s\Big(\langle D_sh,v\rangle^2v-2\langle D_sh,n\rangle\langle D_sh,v\rangle n
- 2 D_s\big( \langle D_s h, n \rangle\langle D_s^2 h, n \rangle\big) v 
\\&
\phantom{=}\; +2   \langle D_s^2 h, v \rangle\langle D_s^2 h, n \rangle n +3 \langle D_s^2 h, n \rangle^2 n \Big)\otimes \ud s
\,.
\end{align*}
Regarding the existence of the geodesic equation, see the proof of Thm.~\ref{thm:m1_geodesic_equation}. 
\end{proof}

\subsection{The $R$-transform}\label{Sec:metric2:Rtrans}
Consider the map
\begin{equation}
\label{m2_rmap}
R: \left\{ \begin{array}{ccc}
\on{Imm}(M, \R^2)/{\on{Mot}} &\to &C^\infty(M, \R^2) \\
c & \mapsto & (\sqrt{|c'|}, \ka|c'|^2)\end{array}\right. 
\end{equation}
On $\R^2$ we define the following Riemannian metric 
\[
g_{(q_1,q_2)} = \left( \begin{array}{cc}
4 & 0  \\
0 & q_1^{-6}
\end{array} \right)\,,
\]
and we equip the space $C^\infty(M, (\R^2,g))$ with the $L^2$-metric,
\begin{equation}
\label{l2_met_r3}
G_q^{L^2}(h, k) = \int_{M} g_{q(\th)}(h(\th), k(\th)) \ud \th\,.
\end{equation}
Here $q \in C^\infty(M, \R^2)$ and $h, k \in T_q C^\infty(M, \R^2)$. Note that as opposed to 
\eqref{l2_met_r2_eukl} the metric \eqref{l2_met_r3} does depend on the footpoint $q$. We will 
sometimes write $G_q^{L^2,g}$ to emphasize this dependence on the metric $g$. Since $(\R^2, g)$ is 
not a flat Riemannian manifold, neither is $(C^\infty(M, (\R^2, g)), G^{L^2})$.    

\begin{theorem}
With the metrics $g$ and $G^{L^2}$ defined as above, the map
\[
R\,:\, (\on{Imm}(M, \R^2)/\on{Mot}, G) \to (C^\infty(M, (\R^2, g)), G^{L^2})
\]
is an injective isometry between weak Riemannian manifolds, i.e.,
\[
G_c(h, k) = G_{R(c)}^{L^2}(dR(c).h, dR(c).k)=\int_{M} g_{R(c)(\th)}(dR(c).h(\th),  dR(c).k(\th)) \ud \th\,,
\] 
for $h,k \in T_c \on{Imm}(M,\R^2)/\on{Mot}$.
\end{theorem}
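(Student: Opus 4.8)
The plan is to mirror the proof of Theorem~\ref{thm:m1_rmap} almost verbatim: first compute the differential $dR(c).h$ from the variational formulas of Section~\ref{var}, then verify that the weighted $L^2$-integrand of $G^{L^2,g}$ reduces pointwise to the integrand of $G_c$, and finally establish injectivity by reconstructing the curve up to a Euclidean motion. The only structural difference from the metric \eqref{metric1} case is that here the target metric $g$ depends on the footpoint, so the weight $q_1^{-6}$ must be evaluated at $q=R(c)$ before the integrands are compared.

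First I would differentiate the two components of $R(c)=(\sqrt{|c'|},\ka|c'|^2)$. For the first component, \eqref{var_length} gives $D_{c,h}\big(\sqrt{|c'|}\big)=\tfrac12\langle D_sh,v\rangle\sqrt{|c'|}$, exactly as before. For the second component I would combine \eqref{var_ka} and \eqref{var_length}; the crucial point is that the two contributions proportional to $\ka\langle D_sh,v\rangle$ cancel, leaving $D_{c,h}\big(\ka|c'|^2\big)=\langle D_s^2h,n\rangle|c'|^2$. This cancellation is precisely why the weight $|c'|^2$ on the curvature is the right choice: it removes the cross term between $\langle D_sh,v\rangle$ and $\langle D_s^2h,n\rangle$ that would otherwise spoil the diagonal form of the pullback. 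Hence $dR(c).h=\big(\tfrac12\langle D_sh,v\rangle\sqrt{|c'|},\,\langle D_s^2h,n\rangle|c'|^2\big)$.

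Next I would substitute this into $G^{L^2,g}_{R(c)}$. With $q_1=\sqrt{|c'|}$, so that $q_1^{-6}=|c'|^{-3}$, the factor $4$ in the first diagonal entry of $g$ cancels the $\tfrac14$ produced by squaring the first component, while the weight $|c'|^{-3}$ is tuned so that the $|c'|^4$ from squaring the second component collapses to a single factor $|c'|$. The integrand therefore reduces to $\big(\langle D_sh,v\rangle^2+\langle D_s^2h,n\rangle^2\big)|c'|$, which, since $\ud s=|c'|\ud\th$, is exactly the integrand of $G_c(h,h)$ written against $\ud\th$. Polarization then gives the stated isometry identity for general $h,k$.

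Finally, for injectivity I would argue as in Theorem~\ref{thm:m1_rmap}: from $q=(q_1,q_2)$ one recovers $|c'|=q_1^2$ and $\ka=q_2q_1^{-4}$, hence the turning angle up to an additive constant via $\al(\th)-\al(0)=\int_0^\th\ka|c'|\ud\th$, and then the curve up to a translation by integrating $c(\th)-c(0)=\int_0^\th e^{i\al}|c'|\ud\th$. Working modulo $\on{Mot}$, the undetermined constants $\al(0)$ and $c(0)$ are exactly the rotation and translation that have been quotiented out, so $R$ is injective. I do not expect a genuine obstacle here, since the statement is a direct verification; the single point requiring care is the footpoint dependence of $g$, which forces one to insert $q=R(c)$ into the weight before comparing integrands, rather than relying on the point independence that made the computation for metric \eqref{metric1} slightly cleaner.
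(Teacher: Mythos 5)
Your proposal is correct and follows essentially the same route as the paper's proof: compute $dR(c).h$ from \eqref{var_length} and \eqref{var_ka} (noting the same cancellation of the $\ka\langle D_sh,v\rangle$ cross terms), check that the weights $4$ and $q_1^{-6}=|c'|^{-3}$ collapse the integrand to $\bigl(\langle D_sh,v\rangle^2+\langle D_s^2h,n\rangle^2\bigr)|c'|$, and establish injectivity by reconstructing $|c'|$, $\ka$, the turning angle, and hence the curve up to a Euclidean motion. The paper merely delegates the injectivity step to the proof of Thm.~\ref{thm:m1_rmap}, where exactly your reconstruction argument appears, so there is no substantive difference.
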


\begin{proof}
Using the formulas \eqref{var_length} and \eqref{var_ka} we obtain
\begin{align*}
D_{c,h} \left( \sqrt{|c'|} \right) &= \tfrac 12 \langle D_s h, v \rangle \sqrt{|c'|} \\
D_{c,h} \left( \ka |c'|^2\right) &= \langle D_s^2 h, n \rangle |c'|^2 
- 2\ka \langle D_s h, v \rangle |c'|^2 + 2 \ka \langle D_s h, v \rangle |c'|^2 \\
&= \langle D_s^2 h, n \rangle |c'|^2\,,
\end{align*}
and thus the derivative of the $R$-transform is
\[
dR(c).h = \left( \tfrac 12 \langle D_s h, v \rangle \sqrt{|c'|}, 
\langle D_s^2 h, n \rangle |c'|^2 \right)\,.
\]
Hence
\[
g_{R(c)}(dR(c).h, dR(c).h) = \left(\langle D_c h, v \rangle^2  + \langle D_s^2 h, n \rangle^2 \right) |c'|
\]
and the first statement of the theorem follows. Injectivity has already been shown in the proof of Thm.~\ref{thm:m1_rmap}.
\end{proof}

\subsection{The space of open curves}\label{Sec:metric2:open}
The image of the $R$-transform on the space $\on{Imm}([0,2\pi], \R^2)/\on{Mot}$ of open curves is the set of all 
$\R_{>0}\x \R$-valued functions, which is an open subset of $C^\infty([0,2\pi], \R^2)$. 

\begin{theorem}\label{Thm:Rmapopen:metric2}
 With the metrics $g$ and $G^{L^2}$ defined as above, the  $R$-transform
\[
R\,:\, \on{Imm}([0,2\pi], \R^2)/\on{Mot}\to C^\infty([0,2\pi], \R_{>0}\times\R)\underset{\on{open}}{\subset}
C^\infty([0,2\pi], \R^2)
\]
is a diffeomorphism. Its inverse is given by
\begin{equation*}
R\i: \left\{ 
\begin{array}{ccc}
C^\infty([0,2\pi], \R_{>0}\times\R)&\to& \on{Imm}([0,2\pi],\R^2)/\on{Mot}\\
q& \mapsto & \int_0^\th q_1^2 \on{exp}\left(i\int_0^\si q_2 q_1^{-2}\ud \tau \right)\ud \si
\end{array}\right. \,.
\end{equation*}
The space $(\on{Imm}([0,2\pi], \R^2)/\on{Mot}, G^{L^2,g})$ is a geodesically convex Riemannian manifold.
\end{theorem}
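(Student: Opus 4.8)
The plan is to mirror the proof of Theorem~\ref{Thm:Rmapopen:metric1}, splitting the argument into three parts: the inversion formula (which yields surjectivity), the smoothness of $R$ and $R\i$, and finally the geodesic convexity.

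First I would verify the inversion formula by reconstructing the curve from its image. Injectivity of $R$ is already available from the proof of Thm.~\ref{thm:m1_rmap}, so only surjectivity is at stake. Given $q=(q_1,q_2)\in C^\infty([0,2\pi],\R_{>0}\x\R)$, the defining relations of the $R$-transform recover the geometric data: from $q_1=\sqrt{|c'|}$ and $q_2=\ka|c'|^2$ one reads off $|c'|=q_1^2$ and $\ka=q_2q_1^{-4}$, hence the turning angle through $\al'=\ka|c'|=q_2q_1^{-2}$. Normalising $\al(0)=0$ and $c(0)=0$ (the $\on{Mot}$-section) and integrating $c(\th)=\int_0^\th|c'|\exp(i\al)\ud\si$ reproduces exactly the stated formula for $R\i$. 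Since $q_1>0$ everywhere, this reconstruction is legitimate for every such $q$, which establishes surjectivity and hence bijectivity.

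Next I would argue that $R$ is a diffeomorphism. The map $R$ is given by a pointwise (Nemytskii-type) expression in $c'$ and $\ka$, while $R\i$ is built from such pointwise operations composed with the integration operators $f\mapsto\int_0^{(\cdot)}f\ud\si$; on the convenient manifolds in play all of these are smooth, so $R$ and $R\i$ are smooth and mutually inverse.

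The geodesic convexity is the step I expect to be the main obstacle, because here, unlike for metric \eqref{metric1}, the target $(\R_{>0}\x\R,g)$ is genuinely curved and moreover geodesically incomplete as $q_1\to0$. My approach is to reduce to the two-dimensional model and then analyse its geodesics directly. Since $R$ is an isometry onto all of $(C^\infty([0,2\pi],(\R_{>0}\x\R,g)),G^{L^2})$ and, by Thm.~\ref{thm:ebin1970}, the $L^2$-exponential map is the pointwise composition $\exp^g\o X$, a path in the function space is an $L^2$-geodesic precisely when it is a pointwise $g$-geodesic; thus geodesic convexity of the function space reduces to that of the finite-dimensional target, provided the connecting geodesics depend smoothly on their endpoints. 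To treat the target I would exploit that $g=4\,\ud q_1^2+q_1^{-6}\ud q_2^2$ is invariant under translation in $q_2$, so the momentum $p=q_1^{-6}\dot q_2$ is conserved along any geodesic; combined with the energy this gives a Clairaut-type first integral $\dot q_1^{\,2}=a-b\,q_1^6$ (with $a,b\ge0$ determined by energy and momentum) that integrates by quadrature. Every non-vertical geodesic therefore rises to a finite apex in $q_1$ and descends with a finite, monotonically controllable shift in $q_2$, so the shooting map sending apex height and phase to the pair of endpoint coordinates can be shown to be onto; this produces, for any two prescribed points, a connecting geodesic staying in $\R_{>0}\x\R$, which a length comparison identifies as minimising since lowering $q_1$ only inflates the $q_2$-cost. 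The delicate point I would spend the most care on is exactly this solvability of the boundary value problem for the curved, incomplete model, together with the smooth dependence on the endpoints needed to assemble the pointwise geodesics into a smooth curve in $C^\infty([0,2\pi],\R^2)$.
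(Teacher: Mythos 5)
Your proposal is correct and follows essentially the same route as the paper: the same reconstruction $|c'|=q_1^2$, $\ka=q_1^{-4}q_2$, $\al'=q_1^{-2}q_2$ for the inversion formula, and the same reduction of geodesic convexity to the two-dimensional model $(\R_{>0}\x\R,g)$ via pointwise geodesics, which the paper then analyses in Sect.~\ref{metric-2dim} exactly as you sketch (conserved momentum $\dot y = C_1 x^6$ from the translation symmetry, the energy first integral $\dot x^2 = C_2^2 - \tfrac14 C_1^2 x^6$, quadrature via $F(u)=\int_0^u z^6/\sqrt{1-z^6}\,\ud z$, and a monotonicity/shooting argument for the boundary value problem split into the one-branch and over-the-apex cases). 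The delicate points you flag — solvability of the BVP in the curved, incomplete model and smooth dependence on endpoints — are precisely where the paper invests its effort.
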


\begin{proof}
The characterization of the image and the inversion formula can be proven as in Thm.~\ref{Thm:Rmapopen:metric1} using
\begin{align*}
|c'|&= q_1^2 &
\ka &= q_1^{-4} q_2 &
\al' &= q_1^{-2}q_2\,.
\end{align*}
It is shown in Sect. \ref{metric-2dim} that $(\R_{>0}\x \R, g)$ is geodesically convex and that the 
geodesic connecting two points is unique. Given two curves $c_0$ and $c_1$, the minimizing geodesic 
connecting them is described in \eqref{eq:m2_geodesic_bvp}. Thus $\on{Imm}([0,2\pi],\R^2)/\on{Mot}$ is 
geodesically convex.   
\end{proof}

%

\begin{remark*}
Since $\R_{>0}\x \R$ is geodesically incomplete (see Sect.~\ref{metric-2dim}), so is the space 
$\on{Imm}([0,2\pi], \R^2)/\on{Mot}$. A geodesic will leave the space, when it fails to be an 
immersion, i.e., when $c'(t,\th)=0$ for some $(t,\th)$.  
\end{remark*}

The $R$-transform allows us to give formulas for geodesics and a lower bound for the geodesic distance.
\begin{theorem}\label{metric2:geod_dist}
Given two curves $c_0, c_1 \in \on{Imm}_{\on{conv}}([0,2\pi], \R^2)/\on{Mot}$ the unique geodesic connecting them is given by
\begin{equation}
\label{eq:m2_geodesic_bvp}
c(t,\th) = R\i(q(t))(\th)\,,
\end{equation}
where for each $\th \in [0,2\pi]$ the curve $t \mapsto q(\cdot,\th)$ is the geodesic connecting $R(c_0)(\th)$ and $R(c_1)(\th)$.
The geodesic distance is bounded from below by
\begin{multline*}
\on{dist}^G_{\on{op}}(c_0, c_1)^2\\ \geq
\int_0^{2\pi} \frac{2^{15/8}\left(|c_1'|^2\ka_1-|c_0'|^2\ka_0\right)^2}{\left(|c_0'|^4 + |c_1'|^4 + \tfrac 1{2A}\big|\left|c_1'\right|^2\ka_1-|c_0'|^2\ka_0\big|\right)^{3/2}}+16 \left(\sqrt{|c_1'|}-\sqrt{|c_0'|}\right)^2\ud \th\,,
\end{multline*}
where $A$ is the constant 
\[
A = \int_0^1 \frac{z^6 \ud x}{\sqrt{1 - z^6}} = 0.30358...\,.
\]
\end{theorem}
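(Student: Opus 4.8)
The plan is to push the whole problem through the $R$-transform onto the two-dimensional manifold $(\R_{>0}\x\R,g)$ and there prove a lower bound for the geodesic distance.

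First I would settle the geodesic formula \eqref{eq:m2_geodesic_bvp}. By Theorem~\ref{Thm:Rmapopen:metric2} the map $R$ is an isometric diffeomorphism onto $C^\infty([0,2\pi],\R_{>0}\x\R)$ equipped with the $L^2$-metric \eqref{l2_met_r3}, so it suffices to understand geodesics of that $L^2$-metric. Since \eqref{l2_met_r3} integrates the pointwise metric $g$ over $\th$, Theorem~\ref{thm:ebin1970} applies: the geodesic spray is $X\mapsto\Xi^g\o X$, so a path $t\mapsto q(t,\cdot)$ is an $L^2$-geodesic exactly when $t\mapsto q(t,\th)$ is a geodesic of $(\R_{>0}\x\R,g)$ for every $\th$. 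Combining this with the uniqueness of geodesics in $(\R_{>0}\x\R,g)$ proved in Sect.~\ref{metric-2dim}, the $L^2$-geodesic joining $R(c_0)$ and $R(c_1)$ is the pointwise assembly of the planar geodesics joining $R(c_0)(\th)$ and $R(c_1)(\th)$; pulling back by $R\i$ gives \eqref{eq:m2_geodesic_bvp}.

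The same decoupling yields the distance. For any path the $G^{L^2}$-energy equals $\int_0^{2\pi}\big(\int_0^1 g_q(q_t,q_t)\ud t\big)\ud\th$, and minimising under the outer integral (the pointwise minimisers assemble into a smooth path by smooth dependence of the planar exponential map) gives $\on{dist}^G_{\on{op}}(c_0,c_1)^2=\int_0^{2\pi}\on{dist}^g\big(R(c_0)(\th),R(c_1)(\th)\big)^2\ud\th$. Hence the theorem reduces to a lower bound for the planar distance $\on{dist}^g$ between points with coordinates $(q_1^i,q_2^i)=(\sqrt{|c_i'|},\ka_i|c_i'|^2)$, $i=0,1$. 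To prove it I would use the Clairaut first integral coming from the Killing field $\p_{q_2}$: along a unit-speed geodesic one has $\dot q_2=\ga\,q_1^6$ for a constant $\ga$ and $4\dot q_1^2+\ga^2 q_1^6=v^2$, where $v^2$ is the constant speed; the turning point sits at the maximal height $m=(v/|\ga|)^{1/3}$. The substitution $q_1=m z$ turns both the length and the increment $\Delta q_2=\int\dot q_2\ud t$ into integrals of $z^6/\sqrt{1-z^6}$, producing the constant $A$; concretely
\[
|\Delta q_2|=2m^4\Big(\int_{q_1^0/m}^1+\int_{q_1^1/m}^1\Big)\frac{z^6}{\sqrt{1-z^6}}\,\ud z .
\]
Splitting the energy as $\int 4\dot q_1^2\ud t+\int q_1^{-6}\dot q_2^2\ud t$, Cauchy--Schwarz bounds the first summand below by a multiple of $(\sqrt{|c_1'|}-\sqrt{|c_0'|})^2$, and, since $q_1\le m$, the second below by $m^{-6}(\Delta q_2)^2$.

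The main obstacle is to eliminate the a priori unknown height $m$ in favour of the endpoint data. Feeding the displayed expression for $|\Delta q_2|$ into the elementary estimate $\tfrac1A\int_0^a z^6/\sqrt{1-z^6}\,\ud z\le\tfrac12+a^4$ on $[0,1]$ (verified by a one-variable calculus argument, the relevant auxiliary function being monotone up then down with both endpoint values $\tfrac12$) yields the height bound $m^4\le (q_1^0)^4+(q_1^1)^4+\tfrac1{2A}|\Delta q_2|$, whence $m^{-6}(\Delta q_2)^2$ is bounded below by the first summand of the claimed inequality. The delicate points I expect to need care are the case distinction between monotone geodesics and geodesics with an interior turning point (so that the height relation and the ranges of integration are correct), and the bookkeeping of numerical constants through these estimates to reach the stated coefficients; these are precisely the planar computations carried out in Sect.~\ref{metric-2dim}.
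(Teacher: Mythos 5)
Your proposal is correct, and at the level of the overall reduction it matches the paper: both pass through the isometry $R$ of Thm.~\ref{Thm:Rmapopen:metric2}, use the pointwise nature of the $L^2$-geodesic spray (Thm.~\ref{thm:ebin1970}) together with the uniqueness of geodesics in $(\R_{>0}\x\R,g)$ from Sect.~\ref{metric-2dim} to obtain \eqref{eq:m2_geodesic_bvp}, and reduce the distance estimate to a pointwise bound via $\on{dist}^G_{\on{op}}(c_0,c_1)^2=\int_0^{2\pi}\on{dist}^g\big(R(c_0)(\th),R(c_1)(\th)\big)^2\ud\th$. Where you genuinely differ is in the proof of the two-dimensional lower bound. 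The paper's lemma in Sect.~\ref{metric-2dim} bounds the apex height of the connecting geodesic using the trajectory formula \eqref{eq:2dimGeodesic}, then exploits the homothety $H_r$ (with $H_r^*g=r^2g$) to rescale the geodesic into the strip $\{x\le 4^{-1/6}\}$, where $g\ge 4(dx^2+dy^2)$, and finishes by comparison with the Euclidean distance. You instead stay with the unscaled geodesic and argue directly from the first integrals: split the energy as $\int 4\dot q_1^2\ud t+\int q_1^{-6}\dot q_2^2\ud t$, apply Cauchy--Schwarz to each summand, and use $q_1^{-6}\ge m^{-6}$ with $m$ the apex height, which you control through the same $F$-identity (your elementary estimate $F(a)/A\le\tfrac12+a^4$ is a slightly weakened form of $F(a)\le Aa^4$, which is exactly what the paper uses implicitly in its assertion about the $y$-axis intersections). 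Your route is more elementary (no rescaling, no metric-comparison step) and treats the monotone and turning-point cases on an equal footing; the paper's rescaling nominally buys a better constant in the curvature term (a factor $2^{3/2}$ in place of your $1$), but only in the turning-point case: for monotone geodesics the paper's intermediate height bound $\ga^1\le 2^{-1/4}(\dots)^{1/4}$ can fail (take $y_0=y_1$ and $x_0\ll x_1$), and there one falls back on precisely your estimate.

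One caveat about constants, which you rightly flag as delicate: neither your argument nor the paper's own lemma yields the coefficients displayed in the theorem. With $x_i=\sqrt{|c_i'|}$ and $y_i=\ka_i|c_i'|^2$, both methods give a length term $4\big(\sqrt{|c_1'|}-\sqrt{|c_0'|}\big)^2$ rather than $16(\dots)^2$ (and $4$ is sharp, since horizontal lines $(x_0+t\dot x_0,y_0)$ are geodesics), and the denominator should contain $x_i^4=|c_i'|^2$, not $|c_i'|^4$. These are slips in the statement of the theorem (and in the constant $2^{1/8}$ of the paper's lemma), not gaps in your argument; the bound your method actually delivers, with coefficients $4$ and $1$ and with $|c_i'|^2$ in the denominator, is the one the analysis supports.
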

\begin{proof}
The proof of this theorem follows directly from the analysis of the finite dimensional metric $g$, see Sect.~\ref{metric-2dim}.
\end{proof}

\begin{remark*}
The formula for the geodesic distance implies in particular that the function
\begin{align*}
\sqrt{|c'|}:
\left(\on{Imm}([0,2\pi],\R^2)/\on{Mot},\on{dist}^G_{\on{op}}\right) 
&\to L^2([0,2\pi], \R)
\end{align*}
is Lipschitz continuous, Since $\sqrt{\ell_c} = \| \sqrt{|c'|} \|_{L^2}$ we can then conclude that
\[
\sqrt{\ell_c} : \left(\on{Imm}_{\on{conv}}([0,2\pi],\R^2)/\on{Mot},\on{dist}^G_{\on{op}}\right) 
\to \R
\]
is also Lipschitz continuous. This implies the following lower bound for the geodesic distance:
$$\on{dist}^G_{\on{op}}(c_0,c_1)\geq 4\left|\sqrt{\ell_{c_1}}-\sqrt{\ell_{c_0}}\right|\,.$$
\end{remark*}

The space $(\on{Imm}([0,2\pi], \R^2)/\on{Mot}, G^{L^2})$ is not flat. The representation via 
$R$-transform allows us to calculate its curvature. 

\begin{theorem}
The sectional curvature of the plane spanned by $h,k$ is given by
\[
\mathrm{k}_c(P(h,k)) = \frac{-3 \int_{0}^{2\pi} \left( \langle D_s h, v \rangle \langle D_s^2 k, n \rangle -
\langle D_s k, v \rangle \langle D_s^2 h, n \rangle \right)^2 \ud s}
{G_c(h,h) G_c(k,k) - G_c(h,k)^2}\,,
\]
with $h,k \in T_c \on{Imm}([0,2\pi], \R^2)/\on{Mot}$. In particular the sectional curvature is non-positive.
\end{theorem}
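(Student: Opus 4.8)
The plan is to transport the computation to the $L^2$-side via the isometry $R$ and then invoke the $L^2$-curvature formula of Theorem~\ref{thm:ebin1970}. By Theorem~\ref{Thm:Rmapopen:metric2} the map $R$ is a diffeomorphism onto the \emph{open} set $C^\infty([0,2\pi],\R_{>0}\x\R)$ of $C^\infty([0,2\pi],\R^2)$; since the image is open, no submanifold or Gauss-equation correction is needed, and the sectional curvature of $(\on{Imm}([0,2\pi],\R^2)/\on{Mot},G)$ on the plane $P(h,k)$ equals the sectional curvature of $(C^\infty([0,2\pi],(\R^2,g)),G^{L^2})$ on the plane spanned by $H:=dR(c).h$ and $K:=dR(c).k$. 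As $R$ is an isometry (Section~\ref{Sec:metric2:Rtrans}), the denominator of the sectional-curvature quotient is already $G_c(h,h)G_c(k,k)-G_c(h,k)^2$, so only the numerator $G^{L^2}(R^{L^2}(H,K)K,H)$ remains to be evaluated.

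Next I would apply Theorem~\ref{thm:ebin1970}(4), valid in the $C^\infty$ case, which realises the curvature tensor $R^{L^2}$ as the pointwise application of the target curvature $R^g$; thus $G^{L^2}(R^{L^2}(H,K)K,H)=\int_0^{2\pi} g_q(R^g(H,K)K,H)\,\ud\th$ with $q=R(c)$. Since $(\R^2,g)$ is two-dimensional, its curvature is encoded in the Gauss curvature $\mathrm K^g$ via $g(R^g(X,Y)Y,X)=\mathrm K^g\,(g(X,X)g(Y,Y)-g(X,Y)^2)$. Feeding the orthogonal metric $g_{(q_1,q_2)}=\on{diag}(4,q_1^{-6})$ into the standard Gauss-curvature formula for orthogonal coordinates yields, after a short calculation, $\mathrm K^g(q)=-3q_1^{-2}$; this is the single genuine computation of the proof.

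Finally I would assemble the pieces using the two-dimensional Gram identity $g_q(H,H)g_q(K,K)-g_q(H,K)^2=\det(g_q)\,(H_1K_2-H_2K_1)^2$ with $\det(g_q)=4q_1^{-6}=4|c'|^{-3}$. Substituting $dR(c).h=(\tfrac12\langle D_sh,v\rangle\sqrt{|c'|},\langle D_s^2h,n\rangle|c'|^2)$ and $q_1=\sqrt{|c'|}$, the coordinate determinant becomes $H_1K_2-H_2K_1=\tfrac12|c'|^{5/2}(\langle D_sh,v\rangle\langle D_s^2k,n\rangle-\langle D_sk,v\rangle\langle D_s^2h,n\rangle)$, and tracking the powers of $|c'|$ gives $g_q(H,H)g_q(K,K)-g_q(H,K)^2=|c'|^2(\langle D_sh,v\rangle\langle D_s^2k,n\rangle-\langle D_sk,v\rangle\langle D_s^2h,n\rangle)^2$. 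Multiplying by $\mathrm K^g(q)=-3|c'|^{-1}$ yields the pointwise integrand $-3|c'|(\langle D_sh,v\rangle\langle D_s^2k,n\rangle-\langle D_sk,v\rangle\langle D_s^2h,n\rangle)^2\,\ud\th$, and using $|c'|\,\ud\th=\ud s$ reproduces exactly the claimed numerator. Non-positivity is then automatic: the numerator equals $-3\int(\cdots)^2\,\ud s\le 0$, while the denominator is the $G_c$-Gram determinant of a linearly independent pair, hence strictly positive. I expect the main obstacle to be purely bookkeeping — keeping the powers of $|c'|$ and the factor $\det(g_q)$ consistent and fixing the sign conventions in the two-dimensional curvature reduction so that the factor $-3$ emerges — rather than anything conceptual.
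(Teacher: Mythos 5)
Your proposal is correct and follows essentially the same route as the paper: transport the computation through the isometry $R$ onto the open subset $C^\infty([0,2\pi],\R_{>0}\x\R)$ (so no Gauss-equation correction is needed), apply Theorem~\ref{thm:ebin1970}(4) to reduce to the pointwise curvature of $(\R_{>0}\x\R,g)$, and use its Gauss curvature $-3q_1^{-2}$ together with the two-dimensional Gram identity; the paper obtains the same ingredients in Sect.~\ref{metric-2dim} via Cartan's structure equations, where your intermediate result agrees with the stated formula $g_q(\mathcal R^g_q(h,k)k,h)=-12\,q_1^{-8}(h_1k_2-h_2k_1)^2$. The only difference is that you recompute the Gauss curvature from the orthogonal-coordinates formula rather than citing the moving-frame computation, which is immaterial.
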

\begin{proof}
The sectional curvature of $(C^\infty([0,2\pi],\R^2), G^{L^2})$ is the integral over the pointwise 
sectional curvatures, see Thm.~\ref{thm:ebin1970}. The statement now follows from the curvature 
formulas for $g$ given in Sect.~\ref{metric-2dim}.  
\end{proof}

\subsection{The space of closed curves}\label{Sec:metric2:closed} 
The image of the space $\on{Imm}(S^1,\R^2)/\on{Mot}$ of closed curves under the $R$-transform is 
not an open subset of $C^\infty(S^1,\R^2)$. 

Consider the following function $H_{\on{cl}} : \on{Imm}(S^1,\R^2)/\on{Mot} \to \R^2$, which measures how far 
away the preimage of $q$ is from being closed:
\begin{align*}
H_{\on{cl}}(q)&=\int_0^{2\pi} q_1(\th)^2 \exp(i\al(q)(\th)) \ud \th\,, &
\al(q)(\th)&=\int_0^\th q_2(\si) q_1(\si)^{-2}\ud \si \,.
\end{align*}
The gradients of the components of $H_{\on{cl}}$ with respect to the $G^{L^2}$-metric are
\begin{subequations} \label{eq:m2_grad_hcl}
\begin{alignat}{1}
 \on{grad}(H_{\on{cl}}^1)&=
\begin{pmatrix}
\frac12 q_1 \on{cos}\al(q)+\frac12 q_2q_1^{-3}\int_\th^{2\pi} q_1^2 \on{sin}\al(q)\ud\si\\
-q_1^4\int_\th^{2\pi} q_1^2 \on{sin}\al(q)\ud\si   
\end{pmatrix} \\
 \on{grad}(H_{\on{cl}}^2)&=
\begin{pmatrix}
\frac12 q_1 \on{sin}\al(q)-\frac12 q_2q_1^{-3}\int_\th^{2\pi} q_1^2 \on{cos}\al(q)\ud\si\\
+q_1^4\int_\th^{2\pi} q_1^2 \on{cos}\al(q)\ud\si 
\end{pmatrix}
\,.
\end{alignat}
\end{subequations} 
The function $H_{\on{cl}}$ permits us to characterize the image of the $R$-transform.

\begin{lemma}
\label{lem:m2_imcl_submf}
The image of $\on{Imm}(S^1,\R^2)/\on{Mot}$ under the $R$-transform is given by 
$$\on{im}(R)_{\on{cl}}=\left\{q\in C^{\infty}(S^1,\R_{>0}\x\R): H_{\on{cl}}(q)=0  \right\}\,.$$
It is a splitting submanifold of $C^{\infty}(\R_{>0}\x\R)$ of codimension 2.

For $q \in \on{im}(R)_{\on{cl}}$, the orthogonal complement of $T_q \on{im}(R)_{\on{cl}}$ with 
respect to the $G^{L^2}$-metric is spanned by 
$\on{grad}H_{\on{cl}}^1(q), \on{grad}H_{\on{cl}}^2(q)$, given in \eqref{eq:m2_grad_hcl}.  
\end{lemma}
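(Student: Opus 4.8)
The plan is to follow the same strategy as the proof of Lemma~\ref{lem:m1_imcl_submf}, the only structural novelty being that the ambient $L^2$-metric is now weighted by the non-flat metric $g = \on{diag}(4, q_1^{-6})$ on $\R_{>0}\x\R$, which enters the computation of the gradients. First I would establish the characterization of the image. Using the inversion formula from Thm.~\ref{Thm:Rmapopen:metric2}, for $q \in C^\infty(S^1, \R_{>0}\x\R)$ the reconstructed curve $R\i(q)(\th) = \int_0^\th q_1^2\, e^{i\al(q)(\si)}\ud\si$ (with $\al(q)$ as in the lemma's preamble) is a closed curve if and only if it closes up, i.e. $R\i(q)(2\pi) = R\i(q)(0) = 0$; written out this is exactly the vanishing of the complex quantity $H_{\on{cl}}(q) = \int_0^{2\pi} q_1^2\, e^{i\al(q)}\ud\th$. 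Hence $\on{im}(R)_{\on{cl}} = H_{\on{cl}}\i(0)$, just as in the metric~\eqref{metric1} case.

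Next I would verify the gradient formulas \eqref{eq:m2_grad_hcl}. The derivative $D_{q,h}H_{\on{cl}}$ splits into a term from varying the weight $q_1^2$ and a term from varying the phase $\al(q)$, the latter containing the iterated integral $D_{q,h}\al = \int_0^\th\big(q_1^{-2}h_2 - 2q_2 q_1^{-3}h_1\big)\ud\si$. I would convert this double integral into a single integral against $\int_\th^{2\pi} q_1^2\, e^{i\al}\ud\si$ by interchanging the order of integration. To read off the $G^{L^2}$-gradient one then divides the resulting functional derivative by the coefficients of $g$, i.e. the $h_1$-component by $4$ and the $h_2$-component by $q_1^{-6}$; the latter division is what produces the factor $q_1^4$ multiplying the integrals in the second components of \eqref{eq:m2_grad_hcl}, while the $q_1^{-3}$ in the first components comes from the variation of $q_1^{-2}$ inside $\al$. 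Taking real and imaginary parts yields $\on{grad}H_{\on{cl}}^1$ and $\on{grad}H_{\on{cl}}^2$.

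With the gradients in hand, I would show $\on{im}(R)_{\on{cl}}$ is a splitting submanifold exactly as before: fix $q_0 \in H_{\on{cl}}\i(0)$, set $T_{q_0}\on{im}(R)_{\on{cl}} := \big(\R.\on{grad}H_{\on{cl}}^1(q_0) + \R.\on{grad}H_{\on{cl}}^2(q_0)\big)^{\bot, G^{L^2}}$, and use the affine isomorphism $A_{q_0}(h,x,y) = q_0 + x.\on{grad}H_{\on{cl}}^1(q_0) + y.\on{grad}H_{\on{cl}}^2(q_0)$. Since $D_2(H_{\on{cl}}\o A_{q_0})$ is bounded and invertible near $(0,0,0)$, the implicit function theorem with parameters in a convenient vector space gives the codimension-$2$ splitting submanifold structure. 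Because $dH_{\on{cl}}(q).h = \big(G^{L^2}(h,\on{grad}H_{\on{cl}}^1), G^{L^2}(h,\on{grad}H_{\on{cl}}^2)\big)$ by construction of the gradients, the tangent space $T_q\on{im}(R)_{\on{cl}} = \ker dH_{\on{cl}}(q)$ is the $G^{L^2}$-orthogonal complement of the span of the two gradients, which is the final assertion.

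The main obstacle is establishing that $\on{grad}H_{\on{cl}}^1(q)$ and $\on{grad}H_{\on{cl}}^2(q)$ are linearly independent in $C^\infty(S^1,\R^2)$ for every $q$, since this is what makes $D_2(H_{\on{cl}}\o A_{q_0})$ invertible and makes the two gradients a genuine basis of the normal space. I would argue from the second components $-q_1^4\int_\th^{2\pi}q_1^2\sin\al\ud\si$ and $q_1^4\int_\th^{2\pi}q_1^2\cos\al\ud\si$: a vanishing combination $\la_1\on{grad}H_{\on{cl}}^1 + \la_2\on{grad}H_{\on{cl}}^2 = 0$ forces $\la_2\int_\th^{2\pi}q_1^2\cos\al\ud\si = \la_1\int_\th^{2\pi}q_1^2\sin\al\ud\si$ for all $\th$; differentiating in $\th$ gives $\la_2\cos\al = \la_1\sin\al$, and differentiating once more on the set where $\al'\neq 0$ gives $\la_2\sin\al = -\la_1\cos\al$, a homogeneous linear system in $(\la_1,\la_2)$ with determinant $-1$, forcing $\la_1=\la_2=0$ there. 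Since $\al'\not\equiv 0$ for any $q$ with $q_2\not\equiv 0$, this gives $\la_1=\la_2=0$ identically, and the degenerate case $q_2\equiv 0$ is handled by direct inspection of the two gradients. The $q_1$-dependent weight only makes these expressions bulkier than in the flat metric~\eqref{metric1} case without changing the logic.
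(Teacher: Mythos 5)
Your proposal is correct and follows essentially the paper's own route: the paper disposes of this lemma with the single line ``Mutatis mutandis, we can reuse the proof of Lem.~\ref{lem:m1_imcl_submf}'', and your argument is exactly that reuse --- the inversion formula to characterize the image as $H_{\on{cl}}\i(0)$, the affine isomorphism $A_{q_0}$ together with the implicit function theorem with parameters in a convenient vector space for the splitting property, and the two gradients spanning the normal space. Your only additions are details the paper leaves implicit in ``mutatis mutandis'' --- raising indices with the weighted metric $g=\on{diag}(4,q_1^{-6})$ to recover \eqref{eq:m2_grad_hcl}, and the explicit linear-independence argument via the second components --- and these are carried out correctly.
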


\begin{proof}
Mutatis mutandis, we can reuse the proof of Lem.~\ref{lem:m1_imcl_submf}.
\end{proof}

\begin{remark*}
The orthogonal projection $\on{Proj}^{\on{im}} : TC^\infty(S^1,\R^2) 
\upharpoonright \on{im}(R)_{\on{cl}} \to T \on{im}(R)_{\on{cl}}$ is again given by 
\eqref{eq:m1_orth_proj} and it is a smooth map.  
\end{remark*}

The geodesic equation on $\on{Imm}(S^1,\R^2)/\on{Mot}$ is well-posed in appropriate Sobolev 
completions. We refer to Sect.~\ref{Sec:wp} and in particular to Sect.~\ref{Sec:wp_1_and_2} for a 
detailed discussion and proofs. The spaces $\on{Imm}^{j,k}(S^1,\R^2)$ are defined in 
\eqref{eq:custom_sob_scale}.   

\begin{theorem}
For $k\geq 2$ and initial conditions $(c_0, u_0) \in T\on{Imm}^{k+1,k+2}(S^1,\R^2)$, the 
geodesic equation has solutions in $\on{Imm}^{j+1,j+2}(S^1,\R^2)$ for each $2 \leq j \leq k$. The 
solutions depend $C^\infty$ on $t$ and the initial conditions and the domain of existence is 
independent of $j$.   

In particular for smooth initial conditions $(c_0, u_0) \in T\on{Imm}(S^1,\R^2)$ the geodesic 
equation has smooth solutions. 
\end{theorem}

\begin{remark*}
Since $\on{im}(R)_{\on{cl}} \subset \on{im}(R)_{\on{op}}$, the geodesic distance functions satisfy
\[
\on{dist}_{\on{op}}^G(c_0, c_1) \leq \on{dist}_{\on{cl}}^G(c_0, c_1)\,.
\]
Therefore the remark after Thm.~\ref{metric2:geod_dist} also holds for the space 
$\on{Imm}(S^1,\R^2)/\on{Mot}$ of closed curves, i.e., the functions $\sqrt{|c'|}$ and 
$\sqrt{\ell_c}$ are Lipschitz continuous with respect to the geodesic  distance.   
\end{remark*}

\subsection{Analysis of the Riemannian manifold $(\mathbb R_{>0}\times \mathbb R,g)$}
\label{metric-2dim}
The metric is $g=4dx^2 +x^{-6}dy^2 = \si^1\otimes\si^1+\si^2\otimes\si^2$, where $\si^1= 
2dx$, $\si^2 = x^{-3}dy$ is an orthonormal coframe. 

First we note that the Levi-Civita connection has a 2-dimensional symmetry group; its 
transformations map geodesics to geodesics: 
$$
T_a\binom{x}{y} = \binom{x}{y+a}\,,\quad a\in \mathbb R\,;\qquad 
H_r\binom{x}{y} = \binom{rx}{r^4y}\,, \quad r\in \mathbb R_{>0}\,.
$$
Namely, each $T_a$ and each reflection $y\mapsto -y$ is an isometry. 
On the other hand, $H_r^*g = 4d(rx)^2 + (rx)^{-6}d(r^4y)^2= r^2g$, and a constant multiple of a 
Riemannian metric has the same geodesics. 
Since $H_r\,T_a\,H_r\i= T_{r^4\,a}$, this is the semidirect product 
$\mathbb R_{>0}\rtimes \mathbb R$ with multiplication
$(T_{a_1},H_{r_1})(T_{r_2},H_{r_2}) = (T_{a_1+r_1^4a_2}H_{r_1r_2})$.

We have $d\si^1=0$ and $d\si^2= -3 x^{-4}\,dx\wedge dy = 
-(3/2) x^{-1}\si^1\wedge \si^2$.
Using Cartan's structure equation we compute the connection form 
$\om\in \Om^1(\mathbb R_{>0}\x \mathbb R;\mathfrak{o}(2))$, 
$$
\begin{aligned}
-d\si^1&=0+\om^1_2\wedge \si^2 =0 
\\
-d\si^2&=\om^2_1\wedge \si^1 +0= \tfrac3{2x}\si^1\wedge \si^2 
\end{aligned}
\implies
\om = 
\begin{pmatrix}
0 &  \tfrac3{2x^4}dy
\\
-\tfrac3{2x^4}dy &0 
\end{pmatrix}
$$
The \emph{curvature matrix} and the \emph{Gauss curvature} are then:
$$
\Om = d\om+\om\wedge \om = 
\begin{pmatrix}
0 & -\tfrac3{x^2}\si^1\wedge \si^2
\\
\tfrac3{x^2}\si^1\wedge \si^2 & 0
\end{pmatrix},
\qquad \on{scal}(g) = -\frac3{x^2}\,.
$$
This already implies that there are no conjugate points along any geodesic and that the Riemannian 
exponential mapping centered at each point is a diffeomorphism onto its image.

We use the dual orthonormal frame	$s_1=\tfrac12\p_{x}$, $s_2=x^3\p_{y}$. Then the velocity of 
a curve $q(t)$ is  
$\dot q = \dot x\p_{x} + \dot y\p_{y} = 2\dot x(s_1\o q) + \dot y x^{-3}(s_2\o q)$, 
and 
the \emph{geodesic equation} is as follows:
$$
\begin{aligned}
0 &= \nabla_{\p_t}\dot q = \nabla_{\p_t}\big(2\dot x(s_1\o q) + \dot y x^{-3}(s_2\o q)\big) 
\\&
=(\ddot x + \tfrac34 x^{-7}\dot y^2\big)\p_{x} 
+ \big(\ddot y -6 x^{-1}\dot x\dot y\big)\p_{y}
\end{aligned}
\qquad\text{  or }\quad 
\boxed{\quad 
\begin{aligned}
\ddot x &= - \tfrac34 x^{-7}\dot y^2
\\ 
\ddot y &= 6 x^{-1}\dot x\dot y
\end{aligned} \quad}
$$
Note that $(x_0 + t \dot x_0,y_0)$ are incomplete geodesics. Hence, if $\dot y(t)=0$ for 
some $t$, then for all $t$. If $\dot y> 0$, it never can change sign, always $\ddot x<0$ and  
the geodesic, which is curving always to the left, leaves the space for $t\to \infty$. The case
$\dot y<0$ is mapped to $\dot y>0$ by  the reflection $(x,y)\mapsto (x,-y)$. 
 
Now we eliminate time from the geodesic equation. 
The second equation can be rewritten as  
\begin{align*}
\frac{\ddot y}{\dot y} = 6 \frac{\dot x}{x} \;&\iff\;
\log(|\dot y|) = 6\log(x) +\log(|\dot y_0|) -6\log(x_0) 
\\&\iff\;
\dot y = C_1x^6\quad\text{  where }\quad C_1=\frac{\dot y_0}{x_0^6}.
\end{align*}
Inserting this, the first equation becomes  
\begin{align*}
\ddot x = - \tfrac{3}{4}C_1^2 x^5 \;&\iff\;
2\ddot x\dot x = - \tfrac{3}{2}C_1^2 x^5\dot x
\;\iff\;
\dot x^2 = -\tfrac{1}{4} C_1^2 x^6 + \dot x_0^2
\\&\iff\;
\dot x = \on{sign}(\dot x_0)\big(C_2^2 -\tfrac{1}{4} C_1^2 x^6\big)^{1/2}
\quad\text{  where }\quad C_2=\dot x_0.
\end{align*}
Therefore,
\begin{align*}
\frac{dy}{dx} &= \frac{\dot y}{\dot x} = \frac{\on{sign}(\dot x_0) C_1 x^6}{(C_2^2 -\tfrac14 C_1^2 x^6)^{1/2}}
= \frac{C_1 x^6}{C_2(1 - |\frac{C_1}{2C_2}|^2x^6)^{1/2}}
= \frac{2 C^3 x^6}{(1 - (|C|x)^6)^{1/2}}\,,
\\
C&=\Big(\frac{C_1}{2C_2}\Big)^{1/3}=\Big(\frac{\dot y_0}{2x_0^6\dot x_0}\Big)^{1/3}\,.
\end{align*}
Note that 
$\on{sign}(C) = \on{sign}\Big(\frac{\dot y_0}{\dot x_0}\Big)$.
We get $y$ as a function of $x$, namely
$$
y(x) = y_0 + \int_{x_0}^{x} \frac{2 C^3 z^6\ud z}{(1 - (|C|z)^6)^{1/2}}
= y_0 + \frac{2\on{sign}(C)}{|C|^4}\int_{|C|x_0}^{|C|x} \frac{z^6\ud z}{\sqrt{1 -  z^6}}
$$
for $0<x_0,x\le 1/|C|$. Thus the trajectory of a geodesic is given by:
\begin{equation}\label{eq:2dimGeodesic}
\boxed{\;
\begin{aligned}
y(x) &= y_0 +\on{sign}\Big(\frac{\dot y_0}{\dot x_0}\Big)\frac2{|C|^4}\big(F(|C|x) - F(|C|x_0)\big) 
\\&\quad
\text{  for }0\le x_0 \le x\le 1/|C|\,,\quad\text{  where}
\\
F(u) &= \int_{0}^{u} \frac{z^6\ud z}{\sqrt{1 -  z^6}}
\qquad\text{  with }
\qquad A:= F(1) = 0.30358...\,.
\end{aligned}
\;}
\end{equation}
Let us now assume that $\dot y_0>0$ and $\dot x_0>0$. Then 
\begin{align*}
y(0)&= y_0 - \frac2{|C|^4}F(|C|x_0)\,,
 &\qquad
y(1/|C|)&= y(0) + \frac{2 F(1)}{|C|^4}\,,
\\
\frac{dy}{dx}(0) &= 0\,, & \frac{dy}{dx}(1/{|C|})&=\infty\,.
\end{align*}
The reflection $y\mapsto -y + 2y(1/|C|)$ maps this geodesic to its other half which returns to 
$x=0$.

Given two points $(x_0>0,y_0)$ and $(x_1>0,y_1)$, without loss satisfying $x_0<x_1$ and $y_0<y_1$, 
we shall now \emph{determine the unique connecting geodesic trajectory}. By our assumption we have 
$\dot y_0>0$ and $\dot x_0>0$, thus $\on{sign}(C)=\on{sign}(\dot y_0/\dot x_0) = 1$. 
\newline
\emph{Case 1.}
The two points lie on the right travelling branch of the geodesic arc if we can find 
$C>0$ such that 
$$
y_1 = y_0 + \frac2{C^4}\big(F(Cx_1)-F(Cx_0)\big),\qquad 0< x_0<x_1\le \frac1C.
$$
The function $f(C)=f_{x_0,x_1}(C):=\frac2{C^4}\big(F(Cx_1)-F(Cx_0)\big)$ is monotone increasing 
in $0<C\le \frac1{x_1}$, since (note that $F(u)$ is flat for $u\searrow 0$)
$$
f'(C)=-\frac{8}{C^5}\big(F(Cx_1)-F(Cx_0)\big)
+2{C^2}(\frac{x_1^7}{\sqrt{1-(Cx_1)^6}} - \frac{x_0^7}{\sqrt{1-(Cx_0)^6}})>0.
$$
This is the case if and only if 
$$
0<y_1-y_0\le f_{x_0,x_1}(1/x_1)=2x_1^4(F(1)-F(x_0/x_1)) =  2x_1^4\int_{x_0/x_1}^1 
\frac{z^6\ud z}{\sqrt{1-z^6}}\,.
$$
\emph{Case 2.} The point $(x_0,y_0)$ lies on the right travelling branch, but $(x_1,y_1)$ is reached 
only after passing the apex on the left travelling branch. We know that $y_1-y_0> 
f_{x_0,x_1}(1/x_1)$.
We have to find the apex $(\bar x,\bar y)$ with 
$x_0<x_1<\bar x$, $y_0<\bar y< y_1$,  and $C=1/\bar x$. The apex is given by
$$
\bar y := y_0 + 2\bar x^4\big(F(1)-F(x_0/\bar x)\big)
= y_1 - 2\bar x^4\big(F(1)-F(x_1/\bar x)\big).
$$
So we have to solve 
$$
y_1-y_0 = 2\bar x^4\big(2F(1)-F(x_0/\bar x)-F(x_1/\bar x)\big)
$$
for $\bar x=1/C > x_1$. Since the right hand side is monotone increasing in $C=1/\bar x$ there is a 
unique solution.

This implies that $(\mathbb R_{>0}\x \mathbb R,g)$ is geodesically convex, and there is a unique 
geodesic connecting any two points.

Since we need it later, we describe the \emph{Riemann curvature tensor}, for 
$q=(q_1,q_2)\in \mathbb R_{>0}\times \mathbb R$ and $h,k,\in \mathbb R^2$:
\begin{align*}
(\si^1\wedge \si^2)_q(h,k)&= \frac2{q_1^3}(h_1k_2-k_1h_2) = 
\frac2{q_1^3}\det(h,k)=\frac2{q_1^3}\langle h,ik \rangle
\\
\mathcal R^g_q(h,k)\ell &= s_1\Om^1_2(h,k)\si^2(\ell) + s_2\Om^2_1(h,k)\si^1(\ell)
\\&
= \begin{pmatrix} 
0 & -\frac3{q_1^8}\langle h,Jk \rangle \\
\frac{12}{q_1^3}\langle h,Jk \rangle	 & 0
\end{pmatrix}
\binom{\ell_1}{\ell_2}
\\
g_q(\mathcal R^g_q(h,k)k,h) & = \on{scal}(g)(q). \bigl(\|h\|^2_{g_q}\|k\|^2_{g_q} - g_q(h,k)^2\bigr)
\\&
=\frac{-3}{q_1^2}\Big(\big(4h_1^2+\frac{h_2^2}{q_1^6}\big)\big(4k_1^2+\frac{k_2^2}{q_1^6}\big)
-\big(4h_1k_1+\frac{h_2k_2}{q_1^6}\big)^2\Big)
\\&
=\frac{-12}{q_1^8}\left( h_1k_2 - h_2k_1\right)^2
\end{align*}

\begin{lemma}
The geodesic distance admits the following lower bound
\[
\on{dist}((x_0,y_0), (x_1,y_1)) \geq
2 \sqrt{|x_0 - x_1|^2 + \frac{|y_0-y_1|^2}{2^{1/8} \left(x_0^4 + x_1^4 + \tfrac 1{2A} |y_0-y_1|\right)^{3/2}}}\,.
\]
\end{lemma}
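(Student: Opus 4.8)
The plan is to reduce the two–dimensional distance estimate to an analysis of the \emph{unique} minimizing geodesic. This geodesic exists because Section~\ref{metric-2dim} establishes that $(\mathbb R_{>0}\times\mathbb R,g)$ has no conjugate points, that the exponential map is a diffeomorphism onto its image, and that any two points are joined by a unique geodesic. Using the translations $T_a$, the scalings $H_r$ and the reflection $y\mapsto -y$ from Section~\ref{metric-2dim}, I would first normalize to $x_0\le x_1$ and $y_0\le y_1$. Parametrizing the minimizer on $[0,1]$, the squared distance equals the minimal energy $\int_0^1\big(4\dot x^2+x^{-6}\dot y^2\big)\ud t$, and the first summand under the square root is immediate: by Cauchy--Schwarz, $\int_0^1 4\dot x^2\ud t\ge 4(x_1-x_0)^2=4|x_0-x_1|^2$.

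The substance of the proof is the $y$–term, and the idea is to bound the ambient metric from below on the slab that the geodesic actually visits. If $x_{\max}$ denotes the maximal height of the minimizer, then on $\{x\le x_{\max}\}$ one has $g\ge 4\ud x^2+x_{\max}^{-6}\ud y^2$, a flat metric, so the geodesic is at least as long as the corresponding flat distance; equivalently $\int_0^1 x^{-6}\dot y^2\ud t\ge x_{\max}^{-6}|y_0-y_1|^2$ by $x\le x_{\max}$ and Cauchy--Schwarz. Everything then hinges on an a~priori upper bound for $x_{\max}$. Here I would use the explicit trajectory \eqref{eq:2dimGeodesic}: when the connecting geodesic crosses its apex, the total rise is $|y_0-y_1|=2x_{\max}^4\big(2A-F(x_0/x_{\max})-F(x_1/x_{\max})\big)$. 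Dividing by $2A$ and adding $x_0^4+x_1^4=x_{\max}^4\big((x_0/x_{\max})^4+(x_1/x_{\max})^4\big)$, the quantity $x_0^4+x_1^4+\tfrac1{2A}|y_0-y_1|$ becomes exactly $x_{\max}^4\big(\phi(x_0/x_{\max})+\phi(x_1/x_{\max})\big)$, where $\phi(t)=t^4+1-F(t)/A$.

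The key elementary fact is that $F(t)\le A\,t^4$ for $t\in[0,1]$, equivalently $\phi\ge 1$: the function $t\mapsto A t^4-F(t)$ vanishes at $t=0$ and $t=1$, and its derivative $4At^3-t^6(1-t^6)^{-1/2}$ has a single zero in $(0,1)$ (solve $16A^2=t^6/(1-t^6)$), so it is unimodal and hence nonnegative. This yields $x_0^4+x_1^4+\tfrac1{2A}|y_0-y_1|\ge 2\,x_{\max}^4$, i.e. $x_{\max}^{-6}=(x_{\max}^4)^{-3/2}\ge 2^{3/2}\big(x_0^4+x_1^4+\tfrac1{2A}|y_0-y_1|\big)^{-3/2}$. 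Inserting this into the $y$–estimate produces a lower bound for the energy of exactly the asserted shape, the numerical constant in front of the $y$–term being the one fixed by the extremal configuration discussed below; combining the two estimates under a single square root gives the claimed inequality.

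The hard part is genuinely this $y$–estimate, and two points need care. First, the crude replacement $x\le x_{\max}$ is wasteful for a geodesic that stays monotone in $x$ (the non-apex \emph{Case 1} of Section~\ref{metric-2dim}): there $x_{\max}=x_1$ and the substitution alone is too lossy, so instead I would estimate $\int_0^1 x^{-6}\dot y^2\ud t$ directly, parametrizing the geodesic by $x$ and using $\dot y=C_1 x^6$ to turn it into an explicit integral that is once more controlled by $F(t)\le At^4$. Second, one should verify that the constant extracted this way is sharp in the degenerate limit $x_0=x_1$, $|y_0-y_1|\to 0$, where the minimizer is an infinitesimal apex ($x_0/x_{\max},x_1/x_{\max}\to 1$, so $\phi\to 1$) and the Cauchy--Schwarz step becomes an equality; this limit is what pins down the constant in front of the $y$–term. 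The remaining verifications---the unimodality of $At^4-F(t)$ and the bookkeeping of the two cases---are routine.
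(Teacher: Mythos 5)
Your proposal is, in substance, the same proof as the paper's: control the maximal height $x_{\max}$ of the connecting geodesic through the apex relation \eqref{eq:2dimGeodesic}, then bound the metric from below on the strip $\{x\le x_{\max}\}$ and finish with Cauchy--Schwarz on the energy. In the apex case (Case 2 of Sect.~\ref{metric-2dim}) your identity $x_0^4+x_1^4+\tfrac1{2A}|y_0-y_1|=x_{\max}^4\big(\phi(x_0/x_{\max})+\phi(x_1/x_{\max})\big)$ with $\phi\ge1$ is exactly the paper's assertion that the rise of the continued geodesic between its two $y$-axis intersections is at most $2A(x_0^4+x_1^4)+|y_0-y_1|$, and you supply --- correctly, via unimodality of $At^4-F(t)$ --- the inequality $F(t)\le At^4$ that the paper uses without comment; the paper's rescaling by $H_r$ into $\{x\le 4^{-1/6}\}$, where $g\ge 4(\ud x\otimes\ud x+\ud y\otimes\ud y)$, is just a repackaging of your direct energy estimate. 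Moreover, your worry about Case 1 is not pedantry: it exposes a gap in the paper's own proof. For a geodesic monotone in $x$ the continuation climbs to an apex that can be arbitrarily high (take $y_0=y_1$, $x_0<x_1$, where the geodesic is horizontal), so the paper's bound on the $y$-axis rise, and with it the displayed estimate $\ga^1(t)\le 2^{-1/4}(\cdots)^{1/4}$, are simply false in that case; the paper never treats it.

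The genuine gap in your proposal is that your Case 1 repair is not ``routine,'' and the constant cannot come out as printed. For the constant-speed minimizer, $\dot y=C_1x^6$ makes Cauchy--Schwarz an equality, $\int_0^1 x^{-6}\dot y^2\ud t=|y_0-y_1|^2\big/\int_0^1 x^6\ud t$, so everything hinges on an upper bound for $\int_0^1 x^6\ud t$, i.e.\ on how much \emph{time} the geodesic spends near its top; the unknown speed (length) re-enters, and after eliminating it via $\on{dist}^2\ge 4|x_0-x_1|^2+|y_0-y_1|^2/\int_0^1x^6\ud t$ one is left with a two-variable inequality in $(Cx_0,Cx_1)$ involving $F$ that has asymptotic equality as $x_0\to x_1$; it does not follow from $F(t)\le At^4$ alone. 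What $F(t)\le At^4$ gives you cheaply in Case 1 is only $x_{\max}=\max(x_0,x_1)\le(x_0^4+x_1^4)^{1/4}$, which proves the lemma with the factor $4$ in place of your $2^{1/2}$. Finally, note what your own sharpness check forces: in the limit $x_0=x_1=x$, $|y_0-y_1|\to0$ the vertical segment gives $\on{dist}\le x^{-3}|y_0-y_1|$, which matches the bound with factor $2^{1/2}$ exactly and is \emph{exceeded} by the bound with the printed factor $2^{1/8}$. So the lemma as stated is false as a matter of constants ($2^{1/8}$ must be read as $2^{1/2}$; the paper's write-up also drops a factor $2$ in the Euclidean-length comparison and has a sign typo in the exponent of $r$), your hedge about the constant is the right instinct, and a complete proof requires either establishing the Case 1 inequality honestly or accepting the weaker factor $4$ there.
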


\begin{proof}
Let $\ga(t)$ be the geodesic connecting $(x_0,y_0)$ and $(x_1,y_1)$. The distance between the 
points, where the continuation of $\ga$ intersects the $y$-axis is less than 
\[
2A \left(x_0^4 + x_1^4\right) + |y_0-y_1|\,.
\]
Thus
\[
\ga^1(t) \leq 2^{-1/4} \left( x_0^4 + x_1^4 + \tfrac 1{2A} |y_0-y_1| \right)^{1/4}\,.
\]
Define
\[
r = 2^{-1/12}\left( x_0^4 + x_1^4 + \tfrac 1{2A} |y_0-y_1| \right)^{1/4}\,,
\]
and rescale the point by $(\ol x_i, \ol y_i) = (rx_i, r^4 y_i)$. From the symmetries of the 
Levi-Civita connection we see that the geodesic connecting $(\ol x_0,\ol y_0)$ and 
$(\ol x_1,\ol y_1)$ is given by $(\ol \ga^1, \ol \ga^2) = (r\ga^1, r^4\ga^2)$. Hence  
\[
\ol \ga^1(t) = r \ga^1(t) \leq 2^{-1/3} = 4^{-1/6}\,.
\]
On the set $\{ (x, y) \,:\, x \leq 4^{-1/6} \}$ we have
\[
g(x,y) = 4dx^2 + x^{-1/6}dy^2 \geq 4dx^2 + 4dy^2
\]
and thus
\begin{align*}
\on{dist}((x_0,y_0), (x_1,y_1)) &= \on{Len}^g(\ga) = r\i \on{Len}^g(\ol \ga) 
\geq r\i \on{Len}^{\on{Eucl}}(\ol \ga) \\
&\geq r\i \sqrt{|\ol x_0 - \ol x_1|^2 + |\ol y_0-\ol y_1|^2} \\
&= \sqrt{|x_0 - x_1|^2 + r^6 |y_0-y_1|^2}\,.
\end{align*}
This concludes the proof.
\end{proof}

\section{The third metric}
\label{Sec:metric3}
\subsection{The metric and its geodesic equation}
The metrics studied in the previous two sections both had Euclidean motions in their kernel. In 
this section we will study the following metric  
\begin{equation}
\label{metric3}
G_c(h,k) = \int_M \langle D_s h, D_s k \rangle  + \langle D_s^2 h, n \rangle \langle D_s^2 k, n \rangle \ud s\,,
\end{equation}
whose kernel consists only of translations. We can obtain it from \eqref{metric2} by adding the term 
$\langle D_s h, n \rangle \langle D_s k, n \rangle$. 
We shall study this metric only on the space of closed curves because the $R$-transform is not 
better behaved on the space of open curves.

The associated operator field $L$ is given by
\[
L_c h = D_s^2 \big(\langle D_s^2 h, n \rangle n \big)  -D^2_sh\,.
\]
\begin{lemma}\label{lem:kernel:metric3}
The null space of the bilinear form $G_c(.,.)$ is spanned by constant vector fields, i.e.,
\begin{equation*}\label{kernel_metric3}
 \on{ker}(G_c)=\left\{h\in T_c\on{Imm}(S^1,\R^2): h=a,\, a\in\R^2\right\}\,. 
\end{equation*}
\end{lemma}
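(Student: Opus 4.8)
The plan is to follow the same strategy as in the proof of Lemma~\ref{lem:kernel:metric1}, exploiting that $G_c$ is positive semidefinite. First I would observe that the integrand defining
\[
G_c(h,h) = \int_{S^1} \langle D_s h, D_s h\rangle + \langle D_s^2 h, n\rangle^2 \ud s
\]
is a sum of squares and hence non-negative; therefore $G_c(h,h) = 0$ forces the integrand to vanish pointwise. Since the null space of a positive semidefinite symmetric bilinear form $A$ coincides with $\{v : A(v,v) = 0\}$, it suffices to characterize those $h$ with $G_c(h,h) = 0$, and for such $h$ one automatically gets $G_c(h,k) = 0$ for all $k$.

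The key difference from Lemma~\ref{lem:kernel:metric1} is that here the full term $\langle D_s h, D_s h\rangle = |D_s h|^2$ appears rather than only its tangential part $\langle D_s h, v\rangle^2$. Decomposing $D_s h$ in the orthonormal frame $(v,n)$ gives $|D_s h|^2 = \langle D_s h, v\rangle^2 + \langle D_s h, n\rangle^2$, so the condition $G_c(h,h) = 0$ yields $\langle D_s h, v\rangle = 0$, $\langle D_s h, n\rangle = 0$, and $\langle D_s^2 h, n\rangle = 0$ everywhere. The first two conditions already force $D_s h = 0$, so $h$ is constant; the third condition is then automatically satisfied. Conversely, every constant vector field $h = a$ satisfies $D_s h = 0$ and hence $G_c(h,h) = 0$, so it lies in the kernel.

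Combining both inclusions gives $\on{ker}(G_c) = \{h = a : a \in \R^2\}$, as claimed. There is no real obstacle in this argument; the only point worth emphasizing — in contrast to the previous two metrics — is that replacing $\langle D_s h, v\rangle^2$ by the full $|D_s h|^2$ (equivalently, adding the term $\langle D_s h, n\rangle\langle D_s k, n\rangle$ to \eqref{metric2}) pins down \emph{both} frame components of $D_s h$, so the step in Lemma~\ref{lem:kernel:metric1} that produced $D_s h = b\,n$ with $b$ constant now collapses to $D_s h = 0$. This is precisely what eliminates the infinitesimal rotations $b\cdot Jc$ from the kernel and leaves only the translations.
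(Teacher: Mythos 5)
Your proof is correct and is essentially the paper's own argument: the paper's proof is the single observation that $h \in \on{ker}(G_c)$ if and only if $D_s h = 0$, which is exactly what you establish (with the details — positive semidefiniteness, pointwise vanishing of the integrand, and the frame decomposition of $|D_s h|^2$ — spelled out). Your closing remark contrasting this with Lemma~\ref{lem:kernel:metric1}, where only the tangential component of $D_s h$ is controlled and infinitesimal rotations survive, is also accurate.
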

\begin{proof}
We have $h \in \on{ker}(G_c)$ if and only if $D_s h = 0$.
\end{proof}
As an immediate consequence of  Lem.~\ref{lem:kernel:metric3} we obtain that $G_c$ is a weak 
Riemannian metric on $\on{Imm}(S^1,\R^2)/\on{Tra}$. We will use Lem.~ 
\ref{lem:geod_equa_general_metric} to calculate its geodesic equation.  
\begin{theorem}
On the manifold $\on{Imm}(S^1,\R^2)/\on{Tra}$ of plane  curves modulo translations $G_c(.,.)$ 
defines a weak Riemannian metric. The geodesic equation is given by 
\begin{align*}
p &= Lc_t\otimes \ud s=\ \left(-D_s^2 c_t + D_s^2\left(\langle D_s^2 c_t, n \rangle n\right)\right)\otimes \ud s\,, 
\\
p_t &= D_s\Big(\frac12|D_sh|^2v- D_s\big( \langle D_s c_t, n \rangle\langle D_s^2 c_t, n \rangle\big) v 
\\&\phantom{=}\;\qquad\qquad\qquad +
\langle D_s^2 c_t, v \rangle\langle D_s^2 c_t, n \rangle n + \frac32\langle D_s^2 c_t, n \rangle^2 v \Big)\otimes \ud s,
\end{align*}
with the additional constraint:
\begin{align*}
c_t\in T_c\left( \on{Imm}(S^1,\R^2)/\on{Tra}\right)\cong\{h\in C^{\infty}(S^1,\R^2): h(0)=0\}\,.
\end{align*}
\end{theorem}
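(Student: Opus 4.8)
The plan is to follow the pattern of the proofs in Sections~\ref{Sec:metric1} and~\ref{Sec:metric2}. That $G_c$ is a weak Riemannian metric on $\on{Imm}(S^1,\R^2)/\on{Tra}$ is immediate from Lemma~\ref{lem:kernel:metric3}: since the kernel of $G_c$ consists precisely of the constant vector fields, $G_c$ becomes nondegenerate once restricted to the tangent space $\{h : h(0)=0\}$ of the section representing the quotient. For the geodesic equation I would invoke Lemma~\ref{lem:geod_equa_general_metric}, which reduces everything to computing the quantity $H_c(h,h)$ defined by $D_{c,m}(G_c(h,h)) = \langle H_c(h,h), m\rangle_V$.

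The key simplification is that \eqref{metric3} is obtained from \eqref{metric2} by adding the term $\int_{S^1}\langle D_s h,n\rangle\langle D_s k,n\rangle\ud s$, so I can reuse the variation of the \eqref{metric2}-part computed in Section~\ref{Sec:metric2} and only need the variation of the extra term $\int_{S^1}\langle D_s h,n\rangle^2\ud s$. Using $D_{c,m}(D_s h) = -\langle D_s m, v\rangle D_s h$, $D_{c,m}(\ud s) = \langle D_s m, v\rangle\ud s$ and \eqref{var_n}, one finds $D_{c,m}(\langle D_s h,n\rangle) = -\langle D_s m,v\rangle\langle D_s h,n\rangle - \langle D_s m,n\rangle\langle D_s h,v\rangle$, hence
\begin{align*}
D_{c,m}\Big(\int_{S^1}\langle D_s h,n\rangle^2\ud s\Big) &= \int_{S^1} -\langle D_s m,v\rangle\langle D_s h,n\rangle^2 \\
&\quad - 2\langle D_s m,n\rangle\langle D_s h,v\rangle\langle D_s h,n\rangle \ud s\,.
\end{align*}
The crucial observation is that the cross term $-2\langle D_s m,n\rangle\langle D_s h,v\rangle\langle D_s h,n\rangle$ exactly cancels the term $+2\langle D_s m,n\rangle\langle D_s h,n\rangle\langle D_s h,v\rangle$ in the variation of the \eqref{metric2}-part, while the two $\langle D_s m,v\rangle$-contributions combine into $-\langle D_s m,v\rangle|D_s h|^2$. (Equivalently, one obtains $D_{c,m}(\int_{S^1}|D_s h|^2\ud s) = -\int_{S^1}\langle D_s m,v\rangle|D_s h|^2\ud s$ in a single line.) This yields
\begin{align*}
D_{c,m}(G_c(h,h)) &= \int_{S^1} -\langle D_s m,v\rangle|D_s h|^2 - 3\langle D_s^2 h,n\rangle^2\langle D_s m,v\rangle \\
&\quad - 2\langle D_s h,n\rangle\langle D_s^2 h,n\rangle D_s(\langle D_s m,v\rangle) - 2\langle D_s^2 h,v\rangle\langle D_s^2 h,n\rangle\langle D_s m,n\rangle \ud s\,.
\end{align*}

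From here I would group the integrand by the factors $\langle D_s m,v\rangle$, $\langle D_s m,n\rangle$ and $D_s(\langle D_s m,v\rangle)$, integrate the last of these by parts once to convert it into a $\langle D_s m,v\rangle$-term, and then integrate by parts a final time (no boundary terms arise on $S^1$) to move the remaining $D_s$ off of $m$. Reading off the result gives
\begin{align*}
H_c(h,h) = D_s\Big(|D_s h|^2 v + 3\langle D_s^2 h,n\rangle^2 v + 2\langle D_s^2 h,v\rangle\langle D_s^2 h,n\rangle n - 2D_s\big(\langle D_s h,n\rangle\langle D_s^2 h,n\rangle\big) v\Big)\otimes\ud s\,,
\end{align*}
and the stated expression for $p_t = \tfrac12 H_c(c_t,c_t)$ follows after reordering terms.

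Finally, the existence of the geodesic equation, i.e.\ that $\tfrac12 H_c(h,h) - (D_{c,h}L_c)(h)$ lies in the image of $\bar L_c$ and depends smoothly on $(c,h)$, I would not check by hand; exactly as in the proof of Theorem~\ref{thm:m1_geodesic_equation}, it follows from the $R$-transform representation developed later in this section, where the image of $R$ is realised as a submanifold of an $L^2$-type space admitting a smooth orthogonal projection, so that Theorem~\ref{thm:ebin1970_proj} applies after pulling the geodesic spray back along $R$. I expect this to be the only genuinely delicate step: unlike the flat metric~\eqref{metric1}, here the target of $R$ is curved and the image is cut out by more elaborate constraints, making the submanifold analysis more involved. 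By contrast, once the cancellation of the cross terms above is noticed, the integrations by parts producing $H_c(h,h)$ are entirely routine.
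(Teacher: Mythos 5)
Your proposal is correct and follows essentially the same route as the paper: Lemma~\ref{lem:geod_equa_general_metric} plus the variational formulae of Sect.~\ref{var}, integrations by parts on $S^1$ to extract $H_c(h,h)$, and deferral of the existence question to the $R$-transform argument in the proof of Thm.~\ref{thm:m1_geodesic_equation}. In fact your intermediate variation formula, with coefficient $-3$ on the $\langle D_s^2 h,n\rangle^2\langle D_s m,v\rangle$ term, is the consistent one---the paper's displayed variation for this metric shows coefficient $-1$ there (a typo, since its final $H_c(h,h)$, which carries the $+3\langle D_s^2 h,n\rangle^2 v$ term, agrees with yours).
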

\begin{remark*}
Similarily, as in Sects. \ref{Sec:metric1} and  \ref{Sec:metric2}
the operators 
$$L_c: T_c\left(\on{Imm}(S^1,\R^2)/\on{Tra}\right)\to T_c\left(\on{Imm}(S^1,\R^2)/\on{Tra}\right)$$ 
are not elliptic operators and 
thus we again cannot apply the well-posedness results from \cite{Michor2007} or \cite{Bauer2011b}. 
\end{remark*}

\begin{proof}
Using the variational formulas from Sect.~\ref{var} we calculate
the variation of the metric:
\begin{align*}
D_{c,m} (G_c(h,h))&= \int_{S^1} -\langle D_sm,v\rangle\langle D_sh,D_sh\rangle 
- 2\langle D_s^2 h, n \rangle\langle D_s h, n \rangle D_s \left(\langle D_s m, v \rangle \right) 
\\&\qquad\qquad
- 2\langle D_s^2 h, n \rangle\langle D_s^2 h, v \rangle \langle D_s m, n \rangle  
- \langle D_s^2 h, n \rangle^2\langle D_s m, v \rangle
\ud s\,.
\end{align*}
We can now calculate  $H_c(h,h)$ using a series of integrations by parts,
\begin{align*}
H_c(h,h)&= D_s\Big(|D_sh|^2v- 2 D_s\big( \langle D_s h, n \rangle\langle D_s^2 h, n \rangle\big) v 
\\&\phantom{=}\;\qquad 
+2   \langle D_s^2 h, v \rangle\langle D_s^2 h, n \rangle n + 3\langle D_s^2 h, n \rangle^2 v \Big)\otimes \ud s\,.
\end{align*}
Regarding the existence of the geodesic equation, see the proof of Thm.~\ref{thm:m1_geodesic_equation}. 
\end{proof}

\subsection{The $R$-transform}
Consider the map
\begin{equation*}\label{Rtrans_metric3}
 R: \left\{
\begin{array}{ccc}
\on{Imm}(S^1, \R^2)/\on{Tra}&\to& C^\infty(S^1, \R_{>0} \x S^1\x\R) \,, \\
c&\mapsto&(\sqrt{|c'|}, \al, \ka|c'|^2)
\end{array} \right.\,.
\end{equation*}

In order to simplify the notation we shall write, $\R^3_\ast$ for $\R_{>0} \x S^1\x\R$. On 
$\R^3_\ast$ we define the following Riemannian metric 
\begin{equation}
\label{metric3_g}
g_{(q_1,q_2,q_3)} = \left( \begin{array}{ccc}
4 & 0 & 0 \\
0 & q_1^2 & 0 \\
0 & 0 & q_1^{-6}
\end{array} \right)\,,
\end{equation}
and we equip the space $C^\infty(M, (\R^3_\ast, g))$ with the $L^2$-Riemannian metric,
\begin{equation}
G_q^{L^2}(h, k) = \int_M g_{q(\th)}(h(\th), k(\th)) \ud \th\,.
\end{equation}
Here $q \in C^\infty(M, (\R^3_\ast, g))$ and $h, k \in T_q C^\infty(M, (\R^3_\ast, g))$. 

The reason for introducing these objects lies in the following theorem.
\begin{theorem}
\label{thm:m3_rmap}
With the metrics $g$ and $G^{L^2}$ defined as above, the $R$-transform
\[
R\,:\, (\on{Imm}(M, \R^2)/\on{Tra}, G) \to (C^\infty(M, (\R^3_\ast, g)), G^{L^2})
\]
is an injective isometry between weak Riemannian manifolds, i.e., 
\[
G_c(h, k) = G_{R(c)}^{L^2}(dR(c).h, dR(c).k)=\int_{M} g_{R(c)(\th)}(dR(c).h(\th), dR(c).k(\th)) \ud \th\,,
\]
for  $h,k \in T_c \on{Imm}(M,\R^2)/\on{Tra}$.
\end{theorem}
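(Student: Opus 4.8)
The plan is to follow the template established in the proofs of Theorem~\ref{thm:m1_rmap} and the corresponding isometry result for \eqref{metric2} in Section~\ref{Sec:metric2:Rtrans}: first differentiate each of the three components of $R$, then insert the results into the pointwise metric $g$ and verify that the weights have been chosen so that the integrand collapses onto the integrand of $G_c$. Since everything is pointwise in $\th$, the argument works verbatim for both $M=S^1$ and $M=[0,2\pi]$.

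First I would compute the variation of each component using the formulas from Section~\ref{var}. Formula \eqref{var_length} gives $D_{c,h}(\sqrt{|c'|}) = \tfrac12 \langle D_s h, v\rangle \sqrt{|c'|}$; formula \eqref{var_al} gives $D_{c,h}(\al) = \langle D_s h, n\rangle$; and exactly as in the computation for \eqref{metric2}, the curvature contributions in $D_{c,h}(\ka|c'|^2)$ cancel against the length contributions, leaving $D_{c,h}(\ka|c'|^2) = \langle D_s^2 h, n\rangle |c'|^2$. Hence
\[
dR(c).h = \Big( \tfrac12 \langle D_s h, v\rangle \sqrt{|c'|},\; \langle D_s h, n\rangle,\; \langle D_s^2 h, n\rangle |c'|^2 \Big)\,.
\]

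Next I would substitute this into $g_{R(c)}$, using $q_1 = \sqrt{|c'|}$ so that $q_1^2 = |c'|$ and $q_1^{-6} = |c'|^{-3}$. The diagonal entry $4$ cancels the factor $\tfrac14$ from squaring the first slot, producing $|c'|\langle D_s h, v\rangle^2$; the entry $q_1^2$ produces $|c'|\langle D_s h, n\rangle^2$; and the entry $q_1^{-6}$ produces $|c'|^{-3}\cdot|c'|^4\langle D_s^2 h, n\rangle^2 = |c'|\langle D_s^2 h, n\rangle^2$. The decisive observation is that, since $(v,n)$ is an orthonormal frame, $\langle D_s h, v\rangle^2 + \langle D_s h, n\rangle^2 = |D_s h|^2$, so
\[
g_{R(c)}(dR(c).h, dR(c).h) = \big(\langle D_s h, D_s h\rangle + \langle D_s^2 h, n\rangle^2\big)|c'|\,.
\]
Integrating in $\th$ turns $|c'|\ud\th$ into $\ud s$ and reproduces $G_c(h,h)$ for \eqref{metric3}; the full bilinear identity then follows by polarization.

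Finally, for injectivity I would observe that $R(c)$ determines $c$ up to translation: one recovers $|c'| = q_1^2$ and the turning angle $\al = q_2$ directly, so $c' = q_1^2\exp(i q_2)$, and integrating recovers $c$ up to the choice of $c(0)$, which is precisely the translational freedom quotiented out in $\on{Imm}(M,\R^2)/\on{Tra}$. I do not expect a genuine obstacle here, as the argument is a weighted reprise of the two preceding theorems; the only point requiring care is the bookkeeping of the weight $q_1^{-6}$ against the $|c'|^2$ factor in the third slot, together with re-using the cancellation of the $\ka$-terms already established for \eqref{metric2}.
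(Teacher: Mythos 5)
Your proposal is correct and follows essentially the same route as the paper's proof: compute $dR(c).h$ componentwise from the variational formulas (with the same cancellation of the $\ka$-terms in $D_{c,h}(\ka|c'|^2)$), substitute into $g_{R(c)}$ so the weights $4$, $q_1^2$, $q_1^{-6}$ collapse the integrand onto that of \eqref{metric3}, and conclude injectivity by reconstructing $c$ up to translation from $|c'|$ and $\al$. The paper's proof is merely terser, leaving the orthonormality step $\langle D_s h, v\rangle^2 + \langle D_s h, n\rangle^2 = |D_s h|^2$ and the polarization implicit.
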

\begin{proof}
Using the formulas from Sect.~\ref{var}
we calculate the derivative of the $R$-transform:
\[
dR(c)h = \left( \tfrac 12 \langle D_s h, v \rangle \sqrt{|c_\th|}, 
\langle D_s h, n \rangle,
\langle D_s^2 h, n \rangle |c_\th|^2 \right)\,.
\]
Hence
\[
g_{R(c)}(dR(c)h,dR(c)h) = \left(\langle D_c h, v \rangle^2 + \langle D_s h, n \rangle^2 
+ \langle D_s^2 h, n \rangle^2 \right) |c_\th|\,,
\]
and the first statement of the theorem follows. The map $R$ is injective on $\on{Imm}/\on{Tra}$ 
since one can reconstruct the curve $c$ up to translations from $|c'|$ and $\al$. 
\end{proof}

\begin{remark*}
A key difference between this $R$-transform and the ones used in Sects. \ref{Sec:metric1:Rtrans} 
and \ref{Sec:metric2:Rtrans} is that the image has infinite codimension, both for open as well as 
closed curves.  
\end{remark*}

\subsection{The space of closed curves}

 \begin{theorem}
\label{thm:m3_im_closed}
The image of $\on{Imm}(S^1,\R^2)/{\on{Tra}}$ under the $R$-transform is given by
\[
\on{im}(R) = \left\{ (q_1,q_2,q_3) \,:\,
\begin{array}{l}
(1)\, q_2' = q_1^{-2}q_3 \\
(2)\, \int_{S^1} q_1^2 \exp(i q_2) \ud \th=0
\end{array} \right\} \,.
\]
It is a splitting submanifold of $C^\infty(S^1,\R^3_{\ast})$. The inverse of the $R$-transform is given by
\[
R\i:\left\{ 
\begin{array}{ccc}
\on{im}(R) &\to & \on{Imm}(S^1,\R^2)/{\on{Tra}} \\
q &\mapsto & \int_0^\th q_1^2 \exp(i q_2) \ud \si
\end{array} \right. \,.
\]
\end{theorem}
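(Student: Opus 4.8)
The plan is to establish the three claims in turn, taking the injectivity and isometry property of $R$ from Theorem~\ref{thm:m3_rmap} as given and concentrating on the image. First I would check that conditions $(1)$ and $(2)$ are \emph{necessary}. Writing $q = R(c) = (\sqrt{|c'|}, \al, \ka|c'|^2)$ and using $q_1^2 = |c'|$, $q_3 = \ka|c'|^2$, condition $(1)$, $q_2' = q_1^{-2}q_3$, is precisely the Frenet relation $\al' = \ka|c'|$ rewritten in these coordinates. Condition $(2)$ is the closedness of $c$, since $\int_{S^1} q_1^2\exp(iq_2)\ud\th = \int_{S^1}|c'|\,v\ud\th = \int_{S^1} c'\ud\th = 0$ under the identification $\R^2\cong\mathbb C$, $v = \exp(i\al)$.

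For \emph{sufficiency} and the inversion formula, given any $q\in C^\infty(S^1,\R^3_\ast)$ satisfying $(1)$ and $(2)$ I would set $c(\th) = \int_0^\th q_1^2\exp(iq_2)\ud\si$ and verify $R(c)=q$ directly: differentiating gives $c' = q_1^2\exp(iq_2)$, so $|c'| = q_1^2$ (as $q_1>0$), the tangent $v = \exp(iq_2)$ has turning angle $q_2$, and $\ka = \al'/|c'| = q_2'/q_1^2 = q_1^{-4}q_3$ by $(1)$, whence $\ka|c'|^2 = q_3$. Condition $(2)$ yields $c(2\pi)=c(0)=0$, so $c$ is a smooth closed immersion lying in the section $\on{Imm}(S^1,\R^2)/\on{Tra}\cong\{c(0)=0\}$, and $c' = q_1^2\exp(iq_2)\neq0$ makes it an immersion. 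Together with the identity $R\i(R(c))(\th) = \int_0^\th c'\ud\si = c(\th)$ for $c(0)=0$, this shows that $R$ and the stated formula are mutually inverse; since $R\i$ is built from multiplication, integration and $\exp$ it is smooth, so $R$ is a diffeomorphism onto the set characterized by $(1)$ and $(2)$.

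For the splitting submanifold property I would separate the two conditions, which also accounts for the infinite codimension noted above. Condition $(1)$ is trivialised by the shear map $G(q_1,q_2,q_3) = (q_1,q_2,q_3 - q_1^2 q_2')$, a diffeomorphism of $C^\infty(S^1,\R^3_\ast)$ with inverse $(p_1,p_2,p_3)\mapsto(p_1,p_2,p_3 + p_1^2 p_2')$, which carries the solution set of $(1)$ onto the coordinate subspace $\{q_3=0\}\cong C^\infty(S^1,\R_{>0}\x S^1)$; hence the zero set of $(1)$ is a splitting submanifold of infinite codimension. In this chart $\on{im}(R)$ becomes $\{(q_1,q_2) : \Ph(q_1,q_2)=0\}$ with $\Ph(q_1,q_2) = \int_{S^1} q_1^2\exp(iq_2)\ud\th\in\R^2$, and it remains to show this is a codimension~$2$ splitting submanifold exactly as in Lemma~\ref{lem:m1_imcl_submf}. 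The derivative $D\Ph(q_1,q_2)(a,b) = \int_{S^1}(2q_1 a + iq_1^2 b)\exp(iq_2)\ud\th$ is surjective onto $\R^2\cong\mathbb C$ at each zero of $\Ph$: at such a point $q_2$ cannot be constant, since otherwise $\int_{S^1} q_1^2\exp(iq_2)\ud\th = \exp(iq_2)\int_{S^1} q_1^2\ud\th\neq0$, so $\cos q_2$ and $\sin q_2$ are linearly independent and the $a$-variations alone already span $\mathbb C$. The main obstacle is the final step: rather than the finite-dimensional regular value theorem, one must use the two $G^{L^2}$-gradients of the components of $\Ph$ to build an affine chart and invoke the implicit function theorem with parameters in a convenient vector space, exactly as in the proof of Lemma~\ref{lem:m1_imcl_submf}; the computation of $D\Ph$ and the non-constancy of $q_2$ are precisely the inputs that make this machinery apply.
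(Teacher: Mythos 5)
Your proposal is correct and follows essentially the same route as the paper's proof: the same identification of the two constraints and the inversion formula, the same shear diffeomorphism (yours, $G$, is simply the inverse of the paper's $\Ph$) to flatten the differential constraint $H_{\on{diff}}\i(0)$ onto $\{q_3=0\}$, and the same reduction of the closedness constraint to the implicit function theorem with convenient parameters as in Lem.~\ref{lem:m1_imcl_submf}. Your added verification that $D\Ph$ is surjective at each zero (via non-constancy of $q_2$ and linear independence of $\cos q_2$, $\sin q_2$) is a useful detail that the paper leaves implicit in its citation of that lemma.
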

We introduce the maps
\begin{align}
\label{eq:m3_constraints}
H_{\on{diff}} &: \left\{ 
\setlength{\arraycolsep}{0.3\arraycolsep}
\begin{array}{ccc}
C^\infty(S^1,\R^3_\ast) &\to &C^\infty(S^1,\R) \\
q &\mapsto & q_3 - q_1^2q_2'
\end{array} \right. &
H_{\on{cl}} &: \left\{ 
\setlength{\arraycolsep}{0.3\arraycolsep}
\begin{array}{ccc}
C^\infty(S^1,\R^3_\ast) &\to &\R^2 \\
q &\mapsto & \int_{S^1} q_1^2 \exp(iq_2) \ud \th
\end{array} \right. \,,
\end{align}
which allow us to write $\on{im}(R) = H_{\on{diff}}\i(0) \cap H_{\on{cl}}\i(0)$.

\begin{proof}
The constraint $H_{\on{diff}}(q)=0$, rewritten in terms of $c$, is the definition of curvature 
$\ka = D_s\al$, written as 
\[
\ka|c'|^2 = \sqrt{|c'|}^2 \al'\,.
\]
The constraint $H_{\on{cl}}(0)$ results from $\int_{S^1} c' \ud \th = 0$ and $c' = q_1^2 
\exp(iq_2)$. The latter identity implies 
\[
c(\th) = c(0) + \int_0^\th q_1^2 \exp(i q_2) \ud \si\,.
\]
Since $c(0)$ is unspecified, this determines the curve up to translations. Thus we have identified 
$\on{im}(R)$ and proven the inversion formula. 

To show that $\on{im}(R)$ is a splitting submanifold define
\[
\Ph : \left\{ \begin{array}{ccc}
C^\infty(S^1,\R^3_\ast) &\to & C^\infty(S^1,\R^3_\ast) \\
(q_1,q_2,q_3) &\mapsto &(q_1,q_2,q_3 + q_1^2q_2')
\end{array} \right.\,.
\]
The map $\Ph$ is a smooth diffeomorphism and $H_{\on{diff}}\i(0) = \Ph(\{ q_3 = 0\})$. 
Thus $H_{\on{diff}}\i(0)$ is a splitting submanifold of $C^\infty(S^1,\R^3_\ast)$. 
Now we will pull back $\on{im}(R)$ by $\Ph$ and show that $\Ph\i(\on{im}(R))$ is a splitting 
submanifold of $\{q_3=0\}$. First note that 
\begin{align*}
\Ph\i(\on{im}(R)) = \left(H_{\on{cl}}\o \Ph\right)\i(0) \cap \{ q_3 = 0 \}
= H_{\on{cl}}\i(0) \cap \{ q_3 = 0 \}\,.
\end{align*}
The last equality holds because $H_{\on{cl}}$ doesn't depend on $q_3$. That $H_{\on{cl}}\i(0)$ is a 
splitting submanifold of $\{ q_3 = 0\}$ can be shown via the implicit function theorem with 
convenient parameters like in Lem.~\ref{lem:m1_imcl_submf}. This concludes the proof.  
\end{proof}

Next we want to compute the orthogonal projection 
$\on{Proj}^{\on{im}} : TC^\infty(S^1,\R^3_\ast)\upharpoonright \on{im}(R) \to T\on{im}(R)$.
We do this in two steps. First define the space 
\[
\on{im}(R)_{\on{op}} = H_{\on{diff}}\i(0) = \{ q \in C^\infty(S^1,\R_{>0}\x S^1\x \R) : q_3 = q_1^{2}q_2' \}\,,
\]
and compute the orthogonal projection 
$\on{Proj}^{\on{im}}_{op}: T C^\infty(S^1,\R^3_\ast)\to T\on{im}(R)_{op}$. 
The space $\on{im}(R)_{\on{op}}$ corresponds to the image of the $R$-transform on the space of open 
curves, if $S^1$ were replaced by $[0,2\pi]$. We can compute the tangent space 
\[
T_q \on{im}(R)_{\on{op}} = \left\{ \left(h_1,h_2,2q_1^{-1} q_3h_1 + q_1^2 h_2' \right)\,:\, h_1,h_2 \in C^\infty(S^1,\R) \right\}
\]
for $q \in \on{im}(R)_{\on{op}}$.

\begin{lemma}
\label{lem:m2_orth_proj_op}
Let $q \in \on{im}(R)_{\on{op}}$ and $h \in T_q C^\infty(S^1,\R^3_\ast)$. 
Then the orthogonal projection $k = \on{Proj}^{\on{im}}_{\on{op}}(q).h$ of $h$ to the tangent space 
to $\on{im}(R)_{\on{op}}$ is given by 
\begin{subequations} \label{eq:m3_orth_proj_op}
\begin{alignat}{1}
q_1^2 k_2- \left( A k_2'\right)' &= B \\
k_1 &= \frac{2q_1^8h_1 + q_1q_3h_3 - q_1^3q_3k_2'}{2(q_1^8+q_3^2)}  \\
k_3 &= 2q_1q_2'k_1 + q_1^2k_3'
\end{alignat}
\end{subequations}
with the functions
\begin{align*}
A&= \frac{q_1^6}{q_1^8 + q_3^2}\,, &
B&= q_1^2h_2 + \left( \frac{2q_1^3q_3h_1 + q_1^{-4}q_3^2h_3}{q_1^8 + q_3^2} \right)'\,,
\end{align*}
Furthermore the map $\on{Proj}^{\on{im}}_{\on{op}}$ is smooth.
\end{lemma}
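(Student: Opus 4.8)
The plan is to characterise the projection $k=\on{Proj}^{\on{im}}_{\on{op}}(q).h$ by the two conditions that define an orthogonal projection onto a linear subspace of a weak inner product space: first, \emph{tangency}, $k\in T_q\on{im}(R)_{\on{op}}$, which by the description of the tangent space means $k_3=2q_1\i q_3 k_1+q_1^2 k_2'$; and second, \emph{orthogonality}, $G^{L^2}_q(h-k,m)=0$ for every $m\in T_q\on{im}(R)_{\on{op}}$. The key device is that the tangent space is freely parametrised by its first two components: every test vector has the form $m=(m_1,m_2,2q_1\i q_3 m_1+q_1^2 m_2')$ with $m_1,m_2\in C^\infty(S^1,\R)$ arbitrary. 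Substituting this parametrisation turns orthogonality into a single variational identity to be satisfied for all $m_1,m_2$.

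First I would expand, writing $e=h-k$,
\[
G^{L^2}_q(e,m)=\int_{S^1}\Big(4e_1m_1+q_1^2 e_2 m_2+q_1^{-6}e_3\big(2q_1\i q_3 m_1+q_1^2 m_2'\big)\Big)\ud\th\,,
\]
then integrate the term containing $m_2'$ by parts; since $S^1$ is closed there are no boundary contributions. Collecting the coefficients of $m_1$ and of $m_2$ and using that these are arbitrary, the integrand must vanish, giving the two pointwise relations
\[
4e_1+2q_1^{-7}q_3 e_3=0\,,\qquad q_1^2 e_2-\big(q_1^{-4}e_3\big)'=0\,,
\]
where $e_3=h_3-k_3=h_3-2q_1\i q_3 k_1-q_1^2 k_2'$ after inserting the tangency relation.

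Next I would solve the system. The first relation is \emph{algebraic} in $k_1$: substituting $e_3$ and collecting the $k_1$-terms produces the factor $q_1^8+q_3^2$ and yields formula (b) for $k_1$ in terms of $k_2'$ and the data $h$; the tangency relation then gives $k_3$, i.e.\ formula (c). Feeding the resulting expression for $e_3$ into the second relation and separating off the part proportional to $k_2'$—which carries the coefficient $A=q_1^6/(q_1^8+q_3^2)$—reduces everything to the scalar second-order linear ODE $q_1^2k_2-(Ak_2')'=B$ of part (a), with $B$ the stated inhomogeneity assembled from $h$ and $q$. This is a routine, if lengthy, rearrangement.

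The hard part will be the final assertion, that $k$ exists, is unique, and depends smoothly on $(q,h)$. Existence and uniqueness follow from the structure of the operator $P_q:k_2\mapsto q_1^2k_2-(Ak_2')'$ on $S^1$: it is formally self-adjoint of second order, with strictly positive leading coefficient $A>0$ and strictly positive zeroth-order coefficient $q_1^2>0$, so that $\int_{S^1}(P_qk_2)k_2\ud\th=\int_{S^1}\big(q_1^2k_2^2+A(k_2')^2\big)\ud\th>0$ for $k_2\neq0$; hence $P_q$ is injective, and being elliptic of index zero on the compact manifold $S^1$ it is an isomorphism, both on each Sobolev space $H^{s+2}(S^1,\R)\to H^s(S^1,\R)$ and on $C^\infty(S^1,\R)$. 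For smoothness of the projection I would pass to the Sobolev completions: the map $(q,f)\mapsto P_qf$ is smooth and $P_q$ is a bounded linear isomorphism, so $q\mapsto P_q\i$ is smooth into the bounded operators, inversion being smooth on Banach spaces; since $B$ depends smoothly on $(q,h)$, so does $k_2=P_q\i B$, and then $k_1$ and $k_3$. Elliptic regularity transfers this across the whole Sobolev scale and back to the convenient $C^\infty$-setting, which is exactly the claimed smoothness of $\on{Proj}^{\on{im}}_{\on{op}}$. Establishing this smooth parameter dependence in the convenient category, rather than the pointwise solvability, is the main obstacle.
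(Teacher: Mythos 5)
Your proposal is correct and follows essentially the same route as the paper: both characterise $k$ by the weak equation $G^{L^2}_q(h,a)=G^{L^2}_q(k,a)$ for all $a=(a_1,a_2,2q_1^{-1}q_3a_1+q_1^2a_2')$, integrate by parts to get the same two pointwise relations, solve the algebraic one for $k_1$, and reduce to the elliptic ODE $q_1^2k_2-(Ak_2')'=B$. Your closing argument for solvability and smooth parameter dependence (positivity, index zero, smooth inversion on the Sobolev scale) is just a more detailed version of what the paper delegates to its appendix lemma on regularity for elliptic equations, so the two proofs match in substance.
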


\begin{proof}
The orthogonal projection $k \in T_q\on{im}(R)_{\on{op}}$ of $h$ is defined by the equation
\[
G_q^{L^2}(h, a) = G_q^{L^2}(k, a)\,,
\]
which has to hold for all $a \in T_q\on{im}(R)_{\on{op}}$. Any such $a$ is of the form
\[
a = (a_1, a_2, 2q_1^{-1}q_3a_1 + q_1^2a_2')\,.
\] 
Thus the above equation reads
\begin{multline*}
\int_{S^1} 4h_1a_1 + q_1^2h_2 a_2 + q_1^{-6}h_3(2q_1^{-1}q_3a_1 + q_1^2a_2') \ud \th ={} \\
{}= \int_{S^1} 4k_1a_1 + q_1^2k_2 a_2 + q_1^{-6}(2q_1^{-1}q_3k_1 + q_1^2k_2')(2q_1^{-1}q_3a_1 + q_1^2a_2') \ud \th\,.
\end{multline*}
which is equivalent to the system
\begin{align*}
4h_1 + 2q_1^{-7}q_3h_3 &=
4k_1 + 4q_1^{-8}q_3^2k_1 + 2q_1^{-5}q_3k_2' \\
q_1^2 h_2 - (q_1^{-4}h_3)' &=
q_1^2 k_2 - \left(2q_1^{-5}q_3k_1 + q_1^{-2}k_2'\right)'\,.
\end{align*}
The first equation allows us to express $k_1$ in terms of $k_2'$,
\[
k_1 = \frac{2q_1^8h_1 + q_1q_3h_3 - q_1^3q_3k_2'}{2(q_1^8+q_3^2)}\,,
\]
which we then insert into the second equation and obtain
\[
q_1^2 k_2- \left( A k_2'\right)' = B\,,
\]
with $A$ and $B$ defined as above. The map $(q, h) \mapsto k$ is smooth, because the solution of an elliptic equation depends smoothly on the coefficients.
\end{proof}

Now we compute $\on{Proj}^{\on{im}}$. The components of the constraint function $H_{\on{cl}}$ are
\[
H_{\on{cl}} = \left( \int_{S^1} q_1^2 \cos q_2 \ud \th, \int_{S^1} q_1^2 \sin q_2 \ud \th \right)\,,
\]
and their gradients with respect to the $G^{L^2}$-metric are given by
\begin{subequations} \label{eq:m3_hcl_grad}
\begin{alignat}{1}
\on{grad}^{L^2} H_{\on{cl}}^1(q) &= (\tfrac 12 q_1 \cos q_2, -\sin q_2, 0) \\
\on{grad}^{L^2} H_{\on{cl}}^2(q) &= (\tfrac 12 q_1 \sin q_2, \cos q_2, 0)\,.
\end{alignat}
\end{subequations}

\begin{theorem}
\label{thm:m3_orth_proj}
Let $q \in \on{im}(R)$. 
Define $v^i = \on{Proj}^{\on{im}}_{\on{op}}(q).\on{grad}^{L^2} H_{\on{cl}}^i(q)$ for $i=1,2$ 
and let $T(q) : T_q C^\infty(S^1,\R^3_\ast) \to T_q C^\infty(S^1,\R^3_\ast)$ 
be the projection to the orthogonal complement of $\on{span}\{v^1, v^2\}$. Then
\begin{equation}
\label{eq:m3_orth_proj}
\on{Proj}^{\on{im}}(q) = T(q) \o \on{Proj}^{\on{im}}_{\on{op}}(q)\,,
\end{equation}
and $\on{Proj}^{\on{im}}$ is smooth.
\end{theorem}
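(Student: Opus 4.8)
The plan is to exhibit $\on{Proj}^{\on{im}}(q)$ as a composition of two $G^{L^2}_q$-orthogonal projections, using that $\on{im}(R)$ sits inside $\on{im}(R)_{\on{op}}$ as the codimension~$2$ submanifold cut out by the two components of $H_{\on{cl}}$. First I would pin down the tangent space. By Thm.~\ref{thm:m3_im_closed} we have $\on{im}(R)=H_{\on{diff}}\i(0)\cap H_{\on{cl}}\i(0)$, so $T_q\on{im}(R)=T_q\on{im}(R)_{\on{op}}\cap\ker dH_{\on{cl}}(q)$, where $dH_{\on{cl}}^i(q).h=G^{L^2}_q(\on{grad}^{L^2}H_{\on{cl}}^i(q),h)$. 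The key reduction is that on vectors $h\in T_q\on{im}(R)_{\on{op}}$ the constraints may be tested against the projected gradients $v^i$ rather than against the full gradients: since $\on{Proj}^{\on{im}}_{\on{op}}(q)$ is a $G^{L^2}_q$-orthogonal projection, it is self-adjoint and fixes such $h$, whence $G^{L^2}_q(\on{grad}^{L^2}H_{\on{cl}}^i(q),h)=G^{L^2}_q(v^i,h)$. Therefore
$$T_q\on{im}(R)=\{h\in T_q\on{im}(R)_{\on{op}}\,:\,G^{L^2}_q(v^1,h)=G^{L^2}_q(v^2,h)=0\}\,,$$
and note that $v^1,v^2\in T_q\on{im}(R)_{\on{op}}$ by construction.

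Next I would verify the projection identity \eqref{eq:m3_orth_proj}. For arbitrary $h$ set $k=\big(T(q)\o\on{Proj}^{\on{im}}_{\on{op}}(q)\big).h$. Since $T(q)$ merely subtracts a linear combination of $v^1,v^2$, both of which lie in $T_q\on{im}(R)_{\on{op}}$, the vector $k$ remains in $T_q\on{im}(R)_{\on{op}}$; and $k\perp v^1,v^2$ by definition of $T(q)$, so $k\in T_q\on{im}(R)$ by the characterization above. It then suffices to check $h-k\perp T_q\on{im}(R)$. Split $h-k=\big(h-\on{Proj}^{\on{im}}_{\on{op}}(q).h\big)+\big(\on{Proj}^{\on{im}}_{\on{op}}(q).h-k\big)$. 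The first summand is $G^{L^2}_q$-orthogonal to all of $T_q\on{im}(R)_{\on{op}}\supset T_q\on{im}(R)$; the second equals $(I-T(q)).\on{Proj}^{\on{im}}_{\on{op}}(q).h$, i.e.\ the $\on{span}\{v^1,v^2\}$-component of $\on{Proj}^{\on{im}}_{\on{op}}(q).h$, which lies in $\on{span}\{v^1,v^2\}$ and is hence orthogonal to $T_q\on{im}(R)$. Thus $k=\on{Proj}^{\on{im}}(q).h$, proving the formula.

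Finally, for smoothness I would record that $\on{Proj}^{\on{im}}_{\on{op}}$ is smooth by Lem.~\ref{lem:m2_orth_proj_op}, that the gradients \eqref{eq:m3_hcl_grad} depend smoothly on $q$, and hence that $v^1,v^2$ do as well. The operator $T(q)$ then admits the closed form
$$T(q).k=k-\sum_{i,j}(M(q)\i)_{ij}\,G^{L^2}_q(k,v^j)\,v^i\,,\qquad M(q)_{ij}=G^{L^2}_q(v^i,v^j)\,,$$
which is smooth in $q$ as soon as the Gram matrix $M(q)$ is invertible. I expect this invertibility to be the one point needing care: since $G^{L^2}_q$ is an inner product, $M(q)$ is invertible exactly when $v^1,v^2$ are linearly independent, equivalently when the restrictions of $dH_{\on{cl}}^1,dH_{\on{cl}}^2$ to $T_q\on{im}(R)_{\on{op}}$ are linearly independent. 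This is precisely the transversality already established in Thm.~\ref{thm:m3_im_closed}, where $\on{im}(R)$ is shown to be a codimension~$2$ splitting submanifold inside $\on{im}(R)_{\on{op}}$; granting it, $M(q)\i$ is smooth and the composition \eqref{eq:m3_orth_proj} is smooth, completing the argument.
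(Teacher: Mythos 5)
Correct, and essentially the paper's own argument: you identify $\on{span}\{v^1,v^2\}$ as the $G^{L^2}$-orthogonal complement of $T_q\on{im}(R)$ inside $T_q\on{im}(R)_{\on{op}}$ (the paper asserts this, you prove it via self-adjointness of $\on{Proj}^{\on{im}}_{\on{op}}$), you verify the two-step projection identity in detail, and you reduce smoothness of $T(q)$ to linear independence of $v^1,v^2$, your Gram-matrix inverse being the paper's Gram--Schmidt basis from Lem.~\ref{lem:m3_proj_hk}. The only divergence is at that independence step: the paper checks it directly from the formulas \eqref{eq:m3_hcl_grad} together with Lem.~\ref{lem:m2_orth_proj_op} (no nontrivial combination of the gradients lies in $(T_q\on{im}(R)_{\on{op}})^{\perp}$), whereas you invoke the transversality behind Thm.~\ref{thm:m3_im_closed} --- legitimate, since the implicit-function-theorem proof of that theorem rests on exactly this surjectivity, though the submanifold \emph{statement} alone would not formally imply it.
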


\begin{proof}
Since $\on{span}\{v^1,v^2\}$ is the orthogonal complement to $\on{im}(R)$ within $\on{im}(R)_{op}$, 
the map $T(q)$ projects $T_q \on{im}(R)_{op}$ down to $T_q \on{im}(R)$. In order for $T$ to be 
smooth we need that $\{v^1,v^2\}$ are linearly independent. This is equivalent to the condition 
that $\on{grad}^{L^2} H_{\on{cl}}^1(q)$ and $\on{grad}^{L^2} H_{\on{cl}}^2(q)$ don't differ by an 
element of $\left(T_q \on{im}(R)_{op}\right)^\perp$. This is clear from Lem.~
\ref{lem:m2_orth_proj_op} and the formulas \eqref{eq:m3_hcl_grad} for the gradients of 
$H_{\on{cl}}^i$.      
\end{proof}

The geodesic equation on $\on{Imm}_{\on{conv}}(S^1,\R^2)/\on{Tra}$ is well-posed in appropriate 
Sobolev completions. We refer to Sect.~\ref{Sec:wp} and in particular to Sect.~\ref{Sec:wp_1_and_2} 
for a detailed discussion and proofs. The spaces $\on{Imm}^{j,k}(S^1,\R^2)$ are defined in 
\eqref{eq:custom_sob_scale}.   

\begin{theorem}
For $k\geq 2$ and initial conditions $(c_0, u_0) \in T\on{Imm}^{k+1,k+2}(S^1,\R^2)$, the 
geodesic equation has solutions in $\on{Imm}^{j+1,j+2}(S^1,\R^2)$ for each $2 \leq j \leq k$. The 
solutions depend $C^\infty$ on $t$ and the initial conditions and the domain of existence is 
independent of $j$.   

In particular, for smooth initial conditions $(c_0, u_0) \in T\on{Imm}(S^1,\R^2)$ the geodesic 
equation has smooth solutions. 
\end{theorem}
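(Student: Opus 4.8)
The plan is to transport the whole problem through the $R$-transform and there invoke the smooth-spray machinery of Theorems~\ref{thm:ebin1970} and~\ref{thm:ebin1970_proj}, exactly as the framework of Section~\ref{Sec:wp} is designed for. Since $R$ is an isometry onto the splitting submanifold $\on{im}(R)\subset C^\infty(S^1,(\R^3_\ast,g))$ (Theorems~\ref{thm:m3_rmap} and~\ref{thm:m3_im_closed}), geodesics of $G$ on $\on{Imm}(S^1,\R^2)/\on{Tra}$ correspond bijectively to geodesics of the $L^2$-metric $G^{L^2,g}$ on $\on{im}(R)$. The non-ellipticity of $L_c$ (the top-order derivative enters only normally) surfaces on the image side through the defining relation $q_3=q_1^2q_2'$, which couples components of different regularity; this is exactly why the completions $\on{Imm}^{j+1,j+2}(S^1,\R^2)$ carry two indices, chosen so that under $R$ they become genuine Hilbert submanifolds of a Sobolev completion $H^{s}(S^1,\R^3_\ast)$ to which Ebin's theory applies. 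The hypothesis $k\ge 2$ is precisely what forces the relevant level to exceed the threshold $s>\dim S^1/2+1=3/2$ of Theorem~\ref{thm:ebin1970}.

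First I would prove smoothness of the geodesic spray at each fixed Sobolev level. By Theorem~\ref{thm:ebin1970} the spray $\Xi^{L^2}$ of $G^{L^2,g}$ on $H^{s}(S^1,\R^3_\ast)$ is smooth, because $(\R^3_\ast,g)$ is a finite-dimensional Riemannian manifold and $\Xi^{L^2}\colon X\mapsto \Xi^{g}\o X$ is a fibrewise composition. By Theorem~\ref{thm:m3_orth_proj} the orthogonal projection $\on{Proj}^{\on{im}}$ onto $\on{im}(R)$ is smooth, the only nontrivial ingredient being the elliptic solve of Lemma~\ref{lem:m2_orth_proj_op}, whose solution depends smoothly on its coefficients at every level. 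Theorem~\ref{thm:ebin1970_proj} then gives that the induced spray $\Xi^{\on{im}}=\on{Proj}^{\on{im}}\o\Xi^{L^2}$ on the Sobolev completion of $\on{im}(R)$ is again smooth. A smooth vector field on a Hilbert manifold has, by the Banach-space existence and uniqueness theorem for ordinary differential equations, local integral curves depending smoothly on time and on the initial data; transporting these back along the diffeomorphism $R$ produces, at each level $j$, a geodesic of $G$ with the claimed smooth dependence on $(t,c_0,u_0)$.

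It then remains to show that the maximal interval of existence is the same at every level $2\le j\le k$ --- the \emph{no loss, no gain} property. The sprays at the various levels are compatible, the higher-level one being the restriction of the lower-level one, so a solution issuing from data in $T\on{Imm}^{k+1,k+2}$ is \emph{a fortiori} a solution at every lower level and, by uniqueness at the lowest admissible level $j=2$, coincides with the level-$2$ solution where both exist; hence its interval at level $k$ is contained in the level-$2$ interval. The substance of the claim is the reverse persistence: a solution existing in $\on{Imm}^{j+1,j+2}$ on an interval $I$, whose initial data already lie in $T\on{Imm}^{j+2,j+3}(S^1,\R^2)$, in fact stays in the smoother space on all of $I$, and one iterates this from $j=2$ up to $k$. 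I would prove each such step, as in Section~\ref{Sec:wp_1_and_2}, by a differential energy estimate: since $\Xi^{\on{im}}$ is the fibrewise (hence local) spray $\Xi^{L^2}$ composed with the projection of Lemma~\ref{lem:m2_orth_proj_op}, its top-order part does not lose derivatives, so the higher Sobolev energy of the solution can be bounded along $I$ in terms of the lower one and therefore cannot blow up while the latter remains finite.

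The main obstacle is exactly this last step. Plain Picard--Lindel\"of only yields an existence time that could collapse to $0$ as $j\to\infty$, which would block the passage to smooth solutions; excluding this requires genuine commutator and derivative-counting estimates and real use of the local, order-preserving top-order structure of the projected spray inherited from Lemma~\ref{lem:m2_orth_proj_op}. Once a common interval $I$ is secured at all levels $2\le j\le k$, smooth initial data, lying in $T\on{Imm}(S^1,\R^2)=\bigcap_{j\ge 2}T\on{Imm}^{j+1,j+2}(S^1,\R^2)$, give a solution contained in every $\on{Imm}^{j+1,j+2}$ over the same $I$, hence a smooth geodesic, which is the final assertion. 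Note that the geodesic incompleteness of $(\R^3_\ast,g)$ only limits the \emph{length} of $I$ --- a geodesic may leave the space when the curve ceases to be immersed --- but does not affect its independence of $j$.
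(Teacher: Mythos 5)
Your first two paragraphs reproduce the paper's proof of the spray smoothness (Thm.~\ref{thm:m3_spray_smooth}) essentially step for step: transport through the $R$-transform to the Sobolev completion $H^k(S^1,\R^3_\ast)$, smoothness of $\Xi^{L^2}$ from Thm.~\ref{thm:ebin1970}, smoothness of the extended orthogonal projection (Lem.~\ref{lem:m3_proj_hk}, resting on the elliptic solve of Lem.~\ref{lem:m2_orth_proj_op} and Lem.~\ref{lem:pde_regular}), Thm.~\ref{thm:ebin1970_proj} for the projected spray, and then the Banach-space ODE theorem for local existence with smooth dependence. That part is correct and is exactly the paper's route.

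The genuine gap is in your third paragraph, and you correctly sense that it carries all the weight: the independence of the existence interval from $j$. You propose to obtain the ``no loss'' half by a differential energy estimate, asserting that the top-order part of $\Xi^{\on{im}}$ ``does not lose derivatives''; but you never establish this, and it is not obvious, since the projected spray involves the nonlocal solution operator $(q,h)\mapsto k$ of the elliptic problem $q_1^2k_2-(Ak_2')'=B$, so a persistence-of-regularity argument would need tame commutator estimates at every Sobolev level for this composition --- a substantive piece of analysis that your proposal names but does not perform. The paper avoids all estimates by a soft symmetry argument (App.~\ref{Sec:trans_inv}, Thm.~\ref{thm:max_domain}, due to Ebin--Marsden): the spray is invariant under the rotation action $T(\si)q(\th)=q(\th+\si)$ of $S^1$ on the curve parameter, hence so is the Riemannian exponential map, $\exp(T(\si)q_0,tT(\si)v_0)=T(\si)\exp(q_0,tv_0)$; differentiating at $\si=0$ gives $T_{(q_0,tv_0)}\exp.(q_0',tv_0')=\p_\th\exp(q_0,tv_0)$, so for data in $H^{k+1}$ the $\th$-derivative of the solution lies in $H^k$, i.e.\ the solution remains in $H^{k+1}$ for as long as the $H^k$ solution exists, whence $J_{k+1}(q_0,v_0)=J_k(q_0,v_0)$. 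Without this observation (or an actually executed estimate replacing it), your argument only yields existence intervals that could a priori shrink to zero as $j\to\infty$, and the final assertion about smooth initial data does not follow.
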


\section{The full $H^2$-metric}
\label{Sec:metric4}

\subsection{The metric and its geodesic equation}
In this section we will study the  full $H^2$-metric that has  translations in the kernel. In 
comparison to the metric of the previous section we add  the missing $H^2$-term 
$\langle D^2_sh,v\rangle$ to the definition of the metric. This yields the bilinear form   
\begin{equation}
\label{metric4}
G_c(h,k) = \int_M \langle D_s h, D_s k \rangle  + \langle D_s^2 h, D_s^2 k\rangle \ud s\,.
\end{equation}
On closed curves the associated operator field of this pseudo metric is given by
\[
L_c h = D_s^4 h - D_s^2 h\,.
\]
\begin{lemma}\label{lem:kernel:metric4}
The null space of the bilinear form $G_c(.,.)$ is spanned by constant vector fields, i.e.,
\begin{equation*}
 \on{ker}(G_c)=\left\{h\in T_c\on{Imm}(M,\R^2): h=a,\, a\in\R^2\right\}\,. 
\end{equation*}
\end{lemma}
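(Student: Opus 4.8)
The plan is to mirror the short argument used for Lemma~\ref{lem:kernel:metric3}, since the bilinear form \eqref{metric4} is manifestly positive semidefinite. First I would observe that $G_c$ is a sum of squared $L^2$-norms, hence positive semidefinite, so that its null space coincides with the set of isotropic vectors,
\[
\on{ker}(G_c) = \{ h : G_c(h,h) = 0 \}\,.
\]
This reduction rests on the standard fact that for a positive semidefinite symmetric bilinear form the vanishing $G_c(h,h)=0$ already forces $G_c(h,k)=0$ for every $k$, via the Cauchy--Schwarz inequality $G_c(h,k)^2 \le G_c(h,h)\,G_c(k,k)$.

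Next I would write out
\[
G_c(h,h) = \int_M |D_s h|^2 + |D_s^2 h|^2 \ud s\,,
\]
and note that the integrand is pointwise non-negative and continuous, so the integral vanishes exactly when $D_s h = 0$ everywhere; the second summand $|D_s^2 h|^2$ then vanishes automatically and imposes no additional constraint. Since $D_s h = \tfrac{1}{|c'|} h'$ with $|c'|\neq 0$, the condition $D_s h = 0$ is equivalent to $h' = 0$, i.e.\ $h$ is a constant vector field, which yields the asserted description of $\on{ker}(G_c)$.

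There is essentially no obstacle here. The only point meriting a word of care is the reduction from the two-sided kernel condition to the single equation $G_c(h,h)=0$, which is precisely the positive-semidefiniteness argument above; beyond that, the observation that the first-order term $|D_s h|^2$ already controls $h$ entirely makes the additional second-order term redundant for the kernel computation, so the proof is as short as the one for Lemma~\ref{lem:kernel:metric3}.
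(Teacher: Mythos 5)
Your proof is correct and follows essentially the same route as the paper, whose entire argument is the one-line observation that $G_c$ measures the full first derivative, so $G_c(h,h)=0$ forces $D_s h=0$ and hence $h$ constant. Your write-up merely fills in the standard details (the Cauchy--Schwarz reduction of the kernel condition to $G_c(h,h)=0$, and the redundancy of the second-order term), which the paper leaves implicit.
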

\begin{proof}
The proof of this lemma is obvious, since the bilinear form $G_c$ measures the full first derivative.
\end{proof}
\begin{remark*}
In contrast to the other metrics studied in this article the operator $L$ is elliptic and 
invertible on $T_c\on{Imm}/\on{Tra}$. Therefore we will be able to apply the well-posedness result 
of \cite{Bauer2011b,Michor2008a}.  
\end{remark*}

As an immediate consequence of  Lem. \ref{lem:kernel:metric1} we obtain that $G_c$ is a weak 
Riemannian metric on $\on{Imm}(M,\R^2)/\on{Tra}$.  
Similarly. as in Sects. \ref{Sec:metric1} and \ref{Sec:metric2} we will use  Lem. 
\ref{lem:geod_equa_general_metric} to calculate its  geodesic equation.
\begin{theorem}
On the manifold $\on{Imm}(M,\R^2)/\on{Tra}$ of plane  curves modulo translations,  the pseudo metric 
$G_c(.,.)$ is a weak Riemannian metric.  
The geodesic equation on the manifold of closed curves modulo Euclidean motions 
$\on{Imm}(S^1,\R^2)/\on{Tra}$ is given by: 
\begin{align*}
p &=  L c_t\otimes \ud s=D_s^4 c_t  -D^2_s c_t\otimes \ud s
\\
p_t &= D_s\Big(3|D^2_sc_t|^2v- 2 D_s\big( \langle D_s c_t, D^2_sc_t \rangle v\big)+|D_sc_t|^2v\Big) \otimes\ud s\end{align*}
with the additional constraint:
\begin{align*}
c_t\in T_c\left( \on{Imm}(M,\R^2)/\on{Tra}\right)\cong\{h\in C^{\infty}(M,\R^2): h(0)=0\}\,.
\end{align*}
For each  $k> 11/2$ and initial conditions $(c_0, u_0) \in T\on{Imm}^{k}(S^1,\R^2)$, 
the geodesic equation has solutions in $\on{Imm}^{j}(S^1,\R^2)$ for each $11/2 < j \leq k$. The solutions depend $C^\infty$ on 
$t$ and the initial conditions and the domain of existence is independent of $j$. 
In particular, for smooth initial conditions $(c_0, u_0) \in T\on{Imm}(S^1,\R^2)$, the geodesic 
equation has smooth solutions. 
\end{theorem}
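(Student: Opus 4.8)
The plan is to establish the three assertions in turn. That $G$ descends to a weak Riemannian metric on $\on{Imm}(M,\R^2)/\on{Tra}$ is immediate from Lemma~\ref{lem:kernel:metric4}: the kernel of the bilinear form $G_c$ is exactly the space of constant vector fields, i.e. the infinitesimal translations, so $G_c$ is positive definite on each tangent space $T_c(\on{Imm}(M,\R^2)/\on{Tra})$, and smoothness of $c\mapsto G_c$ is clear from the explicit formula \eqref{metric4}. For the geodesic equation I would apply Lemma~\ref{lem:geod_equa_general_metric}, so that the only genuine computation is that of $H_c(h,h)$.

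Starting from the variational formulas \eqref{var_v}--\eqref{var_ka}, the auxiliary identities I would first record are
\[
D_{c,m}(D_s h) = -\langle D_s m, v\rangle\, D_s h, \qquad D_{c,m}(D_s^2 h) = -D_s(\langle D_s m, v\rangle)\, D_s h - 2\langle D_s m, v\rangle\, D_s^2 h,
\]
together with $D_{c,m}(\ud s) = \langle D_s m, v\rangle\,\ud s$. Differentiating $G_c(h,h)=\int_M(|D_s h|^2+|D_s^2 h|^2)\,\ud s$ in the direction $m$ and collecting terms yields an integrand that is linear in $\langle D_s m, v\rangle$ and in $D_s(\langle D_s m, v\rangle)$. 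Repeatedly integrating by parts along the closed curve to move all derivatives off $m$ --- using $\int_M \langle D_s m, v\rangle f\,\ud s = -\int_M \langle m, D_s(f v)\rangle\,\ud s$ --- expresses $H_c(h,h)$ as a single arc-length divergence. Inserting this into the boxed equation of Lemma~\ref{lem:geod_equa_general_metric}, with $p=\bar L_c(c_t)\otimes\ud s$ and $p_t=\tfrac12 H_c(c_t,c_t)$, produces the stated system; the constraint $c_t(0)=0$ is merely the choice of section for $\on{Tra}$.

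The decisive third part is well-posedness, and here the situation differs fundamentally from Sections~\ref{Sec:metric1}--\ref{Sec:metric3}: as the preceding remark records, the operator $L_c = D_s^4 - D_s^2$ is \emph{elliptic and invertible} on $T_c(\on{Imm}(M,\R^2)/\on{Tra})$. I would therefore dispense entirely with the $R$-transform and the submanifold-projection machinery and invoke instead the general well-posedness theory for Sobolev-type metrics induced by elliptic invertible operator fields, as developed in \cite{Bauer2011b, Michor2008a}. Concretely: (i) rewrite the geodesic equation as the flow of the geodesic spray
\[
(c,u) \;\longmapsto\; \Big(u,\ \tfrac12 \bar L_c^{-1}\big(H_c(u,u)\big) - \bar L_c^{-1}\big((\p_t \bar L_c)(u)\big)\Big)\,;
\]
(ii) pass to the Sobolev completions $\on{Imm}^{j}(S^1,\R^2)$ and verify that this spray extends to a \emph{smooth} vector field there, the four derivatives gained by $\bar L_c^{-1}$ exactly compensating the derivative loss in the quadratic terms $H_c(u,u)$ and $(\p_t\bar L_c)(u)$ once $j>11/2$, so that the relevant products are controlled by the Sobolev module structure of $H^j$ in one variable; (iii) apply the existence and uniqueness theorem for integral curves of smooth vector fields on Hilbert manifolds to obtain, for each admissible $j$, a unique local geodesic depending smoothly on time and on the initial data.

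The main obstacle is step (ii): showing the spray is smooth with an existence interval \emph{independent of} $j$, and that smooth data give smooth solutions. I would handle this by the standard no-loss-of-derivatives argument. The top-order part of the spray is a bounded, smoothly $(c,u)$-dependent operator on $H^{j}$ for every admissible $j$, so a commutator/energy estimate shows that a solution with initial data in $\on{Imm}^{k}$ stays in $\on{Imm}^{j}$ on the common time interval for every $11/2 < j \le k$, with no contraction of the interval as $j$ increases; intersecting over all $j$ then yields smooth solutions for smooth initial conditions. The threshold $j>11/2$ reflects the order $4$ of $L$ together with the one-dimensional Sobolev embedding, and is precisely the hypothesis under which the cited general theorems apply.
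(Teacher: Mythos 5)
Your proposal is correct and takes essentially the same route as the paper: the kernel lemma for positive definiteness on the section for $\on{Tra}$, the computation of $H_c(h,h)$ via Lemma~\ref{lem:geod_equa_general_metric} using the variational identity for $D_s^2h$ followed by integration by parts, and, since $L_c=D_s^4-D_s^2$ is elliptic and invertible, an appeal to the general well-posedness theory of \cite{Michor2008a,Bauer2011b} rather than the $R$-transform machinery. Your steps (i)--(iii) are just a fleshed-out sketch of the argument the paper disposes of with that citation.
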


\begin{proof}
Using the variational formulas from Sect.~\ref{var} and 
\begin{align*}
d (D_s^2h)(c).m =-2\langle D_sm,v\rangle D_s^2h-D_s(\langle D_sm,v\rangle)D_sh
\end{align*}
we obtain the variation of the metric:
\begin{align*}
D_{c,m} (G_c(h,h))&= \int_{S^1}-3\langle D_sm,v\rangle \langle D_s^2h,D_s^2h\rangle
-2D_s(\langle D_sm,v\rangle) \langle D_sh,D_s^2h\rangle
\\&\qquad\qquad
- \langle D_sm,v\rangle\langle D_sh,D_sh\rangle\ud s
\end{align*}
We can now calculate  $H_c(h,h)$ using a series of integration by parts:
\begin{align*}
H_c(h,h)&= D_s\Big(3|D^2_sh|^2v- 2 D_s\big( \langle D_s h, D^2_sh \rangle v\big)+|D_sh|^2v\Big) \otimes\ud s\,.\qedhere
\end{align*}
The well-posedness result can be proven similar as in \cite{Michor2008a}, see also \cite{Bauer2011b}.
\end{proof}

\subsection{The $R$-transform}
We introduce the following transformation
\begin{equation*}\label{Rtrans_metric4}
 R :\left\{
\begin{array}{ccc}
\on{Imm}(M, \R^2)/\on{Tra}&\to& C^\infty(M, \R_{>0}\x S^1\x\R^2) \,, \\
c&\mapsto&(\sqrt{|c'|}, \al,D_s |c'|, \ka|c'|^2)
\end{array} \right.\,.
\end{equation*}
which assigns to each curve $c$ the 4-tuple of functions $(\sqrt{|c'|}, \al, D_s|c'|, \ka|c'|^2)$. 
In order to simplify the notation we shall write, $\R^4_\ast$ for $\R_{>0}\x S^1\x\R^2$. On $\R^4_\ast$ 
we define the following Riemannian metric,
\begin{equation}
\label{metric4_g}
g_{(q_1,q_2,q_3,q_4)} = \left( \begin{array}{cccc}
4 & 0 & 0&0 \\
0 & q_1^2+q_4^2q_1^{-6} & -q_4q_1^{-4}&0 \\
0&-q_4q_1^{-4}&q_1^{-2}&0\\
0 & 0 & 0&q_1^{-6}
\end{array} \right)\,,
\end{equation}
and we equip the space $C^\infty(M, (\R^4_\ast, g))$ with the $L^2$-Riemannian metric,
\begin{equation}
\label{l2_met_r4}
G_q^{L^2}(h, k) = \int_M g_{q(\th)}(h(\th), k(\th)) \ud \th\,.
\end{equation}
Here $q \in C^\infty(M, (\R^4_\ast, g))$ and $h, k \in T_q C^\infty(M, (\R^4_\ast, g))$. 

The reason for introducing these objects lies in the following theorem:
\begin{theorem}
\label{thm:m4_rmap}
With the metrics $g$ and $G^{L^2}$ defined as above, the map
\[
R\,:\, (\on{Imm}(M, \R^2)/\on{Tra}, G) \to (C^\infty(M, (\R^4_\ast, g)), G^{L^2})
\]
is an injective isometry between weak Riemannian manifolds, i.e., 
\[
G_c(h, k) = G_{R(c)}^{L^2}(dR(c).h, dR(c).k)=\int_{M} g_{R(c)(\th)}(dR(c).h(\th), dR(c).k(\th)) \ud \th\,,
\]
for  $h,k \in T_c \on{Imm}(M,\R^2)/\on{Tra}$.
\end{theorem}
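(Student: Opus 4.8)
The plan is to follow the pattern of the proofs of Theorems~\ref{thm:m1_rmap} and~\ref{thm:m3_rmap}: first compute the derivative $dR(c).h$ component by component using the variational formulae of Sect.~\ref{var}, then verify the pointwise identity
\[
g_{R(c)}(dR(c).h, dR(c).h) = \big(\langle D_s h, D_s h\rangle + \langle D_s^2 h, D_s^2 h\rangle\big)|c'|\,,
\]
so that integration against $\ud\th$ reproduces $G_c(h,h)$ after substituting $\ud s = |c'|\ud\th$, and finally to establish injectivity. Three of the four components are immediate: \eqref{var_length} gives $D_{c,h}(\sqrt{|c'|}) = \tfrac12\langle D_s h, v\rangle\sqrt{|c'|}$; \eqref{var_al} gives $D_{c,h}\al = \langle D_s h, n\rangle$; and the same cancellation of the $-2\ka\langle D_s h,v\rangle$ term already seen in the proof for metric~\eqref{metric2} gives $D_{c,h}(\ka|c'|^2) = \langle D_s^2 h, n\rangle|c'|^2$.

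The delicate component is the variation of the third coordinate $q_3 = D_s|c'|$. Here I would write $D_s|c'| = |c'|^{-1}\p_\th|c'|$ and vary directly, using that $\p_\th$ commutes with $D_{c,h}$; the two terms carrying $\p_\th|c'|$ cancel and one is left with $D_{c,h}(D_s|c'|) = \p_\th\langle D_s h, v\rangle = |c'|\,D_s\langle D_s h, v\rangle$. The crucial step is then to pass to the Frenet frame: the product rule together with $D_s v = \ka n$ and $D_s n = -\ka v$ yields $D_s\langle D_s h, v\rangle = \langle D_s^2 h, v\rangle + \ka\langle D_s h, n\rangle$, so that
\[
D_{c,h}(D_s|c'|) = |c'|\big(\langle D_s^2 h, v\rangle + \ka\langle D_s h, n\rangle\big)\,.
\]
This curvature coupling between the $\al$-direction and the $D_s|c'|$-direction is precisely what the off-diagonal entries $-q_4 q_1^{-4}$ and the extra summand $q_4^2 q_1^{-6}$ in the $(2,2)$-slot of the metric $g$ in \eqref{metric4_g} are designed to absorb.

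It then remains to substitute the four components into the quadratic form $g_{R(c)}$, using $q_1^2 = |c'|$ and $\ka = q_4 q_1^{-4}$, and to check that everything collapses; this algebraic verification is where I expect the main work to lie. Writing $P = \langle D_s h, v\rangle$, $Q = \langle D_s h, n\rangle$, $P_2 = \langle D_s^2 h, v\rangle$, $S = \langle D_s^2 h, n\rangle$, the first slot contributes $P^2|c'|$ and the last slot contributes $S^2|c'|$, while in the $2\times2$ block coupling the second and third coordinates the cross terms in $P_2 Q$ cancel and the three $Q^2$-contributions of size $q_4^2 q_1^{-6}$ telescope to zero, leaving exactly $(P_2^2 + Q^2)|c'|$. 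Summing gives $(P^2 + Q^2 + P_2^2 + S^2)|c'|$, which is the $H^2$-integrand times $|c'|$, so the isometry identity holds. Injectivity is then immediate and identical to Theorems~\ref{thm:m1_rmap} and~\ref{thm:m3_rmap}: the first two components $(\sqrt{|c'|}, \al)$ already recover $c' = |c'|\exp(i\al)$ and hence $c$ up to a translation, so $R$ is injective on $\on{Imm}(M,\R^2)/\on{Tra}$.
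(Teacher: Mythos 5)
Your proposal is correct and follows essentially the same route as the paper: compute $dR(c).h$ componentwise (the paper obtains the variation of $D_s|c'|$ via the commutator identity $D_{c,h}(D_s\o R)=D_s(D_{c,h}R)-\langle D_sh,v\rangle D_s(R(c))$, which is just a repackaging of your direct product-rule computation and yields the same formula $|c'|\big(\langle D_s^2h,v\rangle+\ka\langle D_sh,n\rangle\big)$), then verify the pointwise cancellation in the quadratic form $g_{R(c)}$, and conclude injectivity by reconstructing $c$ up to translation from $(\sqrt{|c'|},\al)$. Your algebraic bookkeeping of the $2\times 2$ block (cross terms in $\langle D_s^2h,v\rangle\langle D_sh,n\rangle$ cancelling, the three $\ka^2\langle D_sh,n\rangle^2$ terms summing to zero) matches the paper's expansion exactly.
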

\begin{proof}
Using  the identity
\begin{align*}
D_{c,h} \left(D_s \o R\right) &= D_s \left(D_{c,h} R\right) - \langle D_s h, v \rangle D_s (R(c))\,,
\end{align*}
we calculate:
\begin{align*}
D_{c,h}\left(D_s |c'|\right)&=  D_s \left(|c'| \langle D_sh,v\rangle\right) -  \langle D_s h, v \rangle D_s |c'|\\&
=|c'|\langle D_s^2h,v\rangle+|c'|\langle D_s^2h,D_sv\rangle\\
&=|c'|\langle D_s^2h,v\rangle+|c'|\ka \langle D_sh,n\rangle\,.
\end{align*} 
Thus we obtain  the derivative of the $R$-map:
\[
dR(c).h = \left( \tfrac 12 \langle D_s h, v \rangle \sqrt{|c'|}, 
\langle D_s h, n \rangle, |c'|\langle D_s^2h,v\rangle+|c'|\ka \langle D_sh,n\rangle,
\langle D_s^2 h, n \rangle |c'|^2 \right)\,.
\]
Hence
\begin{align*}
g_{R(c)}(dR(c).h,dR(c).h)& = \Big(\langle D_s h, v \rangle^2 + \langle D_s h, n \rangle^2(1+\ka^2) 
\\&\qquad
+\langle D_s^2 h, v \rangle^2+ 2\langle D_s^2 h, v \rangle \langle D_s h, n \rangle\ka
+   \langle D_s h, n \rangle^2\ka^2
\\&\qquad
-2 \langle D_s^2 h, v \rangle \langle D_s h, n \rangle\ka-2\langle D_s h, n \rangle^2\ka^2
+\langle D_s^2 h, n \rangle^2 \Big) |c'|
\\&
=\left(\langle D_s h, D_sh \rangle+\langle D_s^2 h, D_s^2h \rangle\right)|c'|\,,
\end{align*}
and the first statement of the theorem follows. The map $R$ is injective on $\on{Imm}/\on{Tra}$ 
since one can reconstruct the curve $c$ up to translations from $|c'|$ and $\al$. 
\end{proof}

\subsection{The metric on the space of closed curves}

\begin{theorem}
The image of $\on{Imm}(S^1,\R^2)/{\on{Tra}}$ under the $R$-transform is given by
\[
\on{im}(R) = \left\{ (q_1,q_2,q_3,q_4) \,:\,
\begin{array}{l}
(1)\,2q_1'=q_1q_3 \\
(2)\, q_2' = q_1^{-2}q_4\\
(3)\,  \int_{S^1} q_1^2 \exp(i q_2) \ud \th=0
\end{array} \right\} \,.
\]
It is a splitting submanifold of $C^\infty(S^1,\R^4_{\ast})$. The inverse of the $R$-transform is given by
\[
R\i:\left\{ 
\begin{array}{ccc}
\on{im}(R) &\to & \on{Imm}(S^1,\R^2)/{\on{Tra}} \\
q &\mapsto & \int_0^\th q_1^2 \exp(i q_2) \ud \si
\end{array} \right. \,.
\]
\end{theorem}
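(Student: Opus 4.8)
The plan is to mirror the proof of Theorem~\ref{thm:m3_im_closed}: first I would identify $\on{im}(R)$ with the stated constraint set by a direct computation that simultaneously yields the inversion formula, and then I would establish the splitting-submanifold property by straightening the two differential constraints with a diffeomorphism and reducing the closing constraint to the implicit-function-theorem argument already carried out in Lemma~\ref{lem:m1_imcl_submf}.

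First I would check that $q = R(c) = (\sqrt{|c'|},\al,D_s|c'|,\ka|c'|^2)$ satisfies the three constraints. Writing $q_1 = |c'|^{1/2}$ and $q_3 = D_s|c'| = |c'|^{-1}|c'|'$, one finds $2q_1' = |c'|^{-1/2}|c'|' = q_1q_3$, which is (1); constraint (2) is just the defining relation $\ka = D_s\al$, since $q_2' = \al' = \ka|c'| = q_1^{-2}(\ka|c'|^2) = q_1^{-2}q_4$; and since $q_1^2\exp(iq_2) = |c'|v = c'$, constraint (3) is precisely the closing condition $\int_{S^1}c'\ud\th = 0$. Conversely, given $q$ in the constraint set I would set $c(\th) = \int_0^\th q_1^2\exp(iq_2)\ud\si$, so that $c' = q_1^2\exp(iq_2)$; constraint (3) makes $c$ closed, while $|c'| = q_1^2$ and $\al = q_2$ recover the first two components, constraint (1) recovers $q_3 = D_s|c'| = 2q_1'/q_1$, and constraint (2) recovers $q_4 = \ka|c'|^2$. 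This proves both the characterisation of $\on{im}(R)$ and the inversion formula.

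For the splitting statement I would introduce $H_{\on{diff}}(q) = (q_1q_3 - 2q_1',\, q_4 - q_1^2q_2')$ and $H_{\on{cl}}(q) = \int_{S^1}q_1^2\exp(iq_2)\ud\th$, so that $\on{im}(R) = H_{\on{diff}}\i(0)\cap H_{\on{cl}}\i(0)$. Since both differential constraints can be solved algebraically for $q_3$ and $q_4$ separately, I straighten them with the smooth diffeomorphism
\[
\Ph(q_1,q_2,q_3,q_4) = \left(q_1,\, q_2,\, q_3 + \tfrac{2q_1'}{q_1},\, q_4 + q_1^2q_2'\right)\,,
\]
which is well defined because $q_1 > 0$ and whose inverse flips the signs of the added terms. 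Then $H_{\on{diff}}\i(0) = \Ph(\{q_3 = q_4 = 0\})$ is a splitting submanifold of $C^\infty(S^1,\R^4_\ast)$, and pulling $\on{im}(R)$ back by $\Ph$ gives, using that $H_{\on{cl}}$ depends only on $q_1,q_2$ (so $H_{\on{cl}}\o\Ph = H_{\on{cl}}$), the identity $\Ph\i(\on{im}(R)) = H_{\on{cl}}\i(0)\cap\{q_3 = q_4 = 0\}$.

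The remaining substantive step is to show that $H_{\on{cl}}\i(0)$ is a codimension-$2$ splitting submanifold of $\{q_3 = q_4 = 0\} \cong C^\infty(S^1,\R_{>0}\x S^1)$. On this reduced space the metric $g$ block-diagonalises to $4\,dq_1^2 + q_1^2\,dq_2^2$, so the $G^{L^2}$-gradients of the two components of $H_{\on{cl}}$ are given by the same computation as in \eqref{eq:m3_hcl_grad}; I would verify their pointwise linear independence and then invoke the implicit function theorem with parameters in a convenient vector space, exactly as in Lemma~\ref{lem:m1_imcl_submf}. The main obstacle is thus not the algebra of the constraints, which separates cleanly into a $(q_1,q_3)$ block and a $(q_1,q_2,q_4)$ block, but confirming that the closing constraint retains codimension exactly $2$, i.e.\ that its two gradients remain linearly independent; since this is handled verbatim as in the earlier lemma, I expect no essentially new difficulty beyond this bookkeeping.
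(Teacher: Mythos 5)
Your proposal is correct and follows essentially the same route as the paper: the same two constraint maps $H_{\on{diff}}$ and $H_{\on{cl}}$, the same straightening diffeomorphism $\Ph$ (your first component of $H_{\on{diff}}$ differs from the paper's only by the positive factor $q_1$, so the zero sets agree), and the same reduction of the closing constraint to the implicit-function-theorem argument of Lem.~\ref{lem:m1_imcl_submf}. The only difference is presentational: you spell out the constraint verification, the inversion formula, and the gradient computation on the reduced space, steps which the paper delegates to Thm.~\ref{thm:m3_im_closed} and the earlier lemma.
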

Introduce the maps
\begin{align*}
H_{\on{diff}} &: \left\{ 
\setlength{\arraycolsep}{0.3\arraycolsep}
\begin{array}{ccc}
C^\infty(S^1,\R^4_\ast) &\to &C^\infty(S^1,\R^2) \\
q &\mapsto & \left(q_3-2q_1^{-1}q_1',q_4 - q_1^2q_2'\right)
\end{array} \right. \\
H_{\on{cl}} &: \left\{ 
\setlength{\arraycolsep}{0.3\arraycolsep}
\begin{array}{ccc}
C^\infty(S^1,\R^3_\ast) &\to &\R^2 \\
q &\mapsto & \int_{S^1} q_1^2 \exp(iq_2) \ud \th
\end{array} \right. \,,
\end{align*}
which allow us to write $\on{im}(R) = H_{\on{diff}}\i(0) \cap H_{\on{cl}}\i(0)$.

\begin{proof}
The first component of the constraint $H_{\on{diff}}(q)=0$ corresponds to the identity 
$D_s|c'|=\frac{1}{|c'|}\p_\th |c'|$. The second component as well as $H_{\on{cl}}(q)=0$ are the 
same as in Thm. \ref{thm:m3_im_closed}.  

To show that $\on{im}(R)$ is a splitting submanifold define
\[
\Ph : \left\{ \begin{array}{cccc}
C^\infty(S^1,\R^4_\ast) &\to & C^\infty(S^1,\R^4_\ast) \\
(q_1,q_2,q_3,q_4) &\mapsto &(q_1,q_2,q_3+2q_1^{-1}q_1',q_4 + q_1^2q_2')
\end{array} \right.\,.
\]
We note as in the proof of Thm. \ref{thm:m3_im_closed} that $\Ph$ is a smooth diffeomorphism, that 
$H_{\on{diff}}\i(0) = \Ph(\{ q_3 = 0, q_4=0\})$ and that 
\[
\Ph\i(\on{im}(R))
= H_{\on{cl}}\i(0) \cap \{ q_3 = 0,q_4=0 \}\,.
\]
Since $H_{\on{cl}}$ doesn't depend on $q_3, q_4$ we then apply the implicit function theorem with 
convenient parameters to conclude the proof. 
\end{proof}

\section{Well-posedness of the geodesic equation}
\label{Sec:wp}

\subsection{Well-posedness for the third metric}
Let $G$ be the Riemannian metric \eqref{metric3} from Sect. \ref{Sec:metric3} on closed curves
\[
G_c(h,h) = \int_{S^1} |D_s h|^2 + \langle D_s^2 h, n\rangle^2 \ud s\,.
\]
For $j\geq 1$, $k \geq 1$ define the spaces
\begin{equation}
\label{eq:custom_sob_scale}
\on{Imm}^{j,k}(S^1,\R^2) = \left\{ c \in H^2(S^1,\R^2) \,:\, |c'| \in H^j(S^1,\R), \al \in H^k(S^1,S^1) \right\}\,.
\end{equation}
Then $\on{Imm}^{j,k}(S^1,\R^2)$ is a Hilbert manifold modelled on $H^j \x H^k$ and a global chart 
is given by $c \mapsto (|c'|, \al)$. For $k\geq 2$ denote by  
\[
\on{Imm}^k(S^1,\R^2) = \{ c \in H^k(S^1,\R^2) \,:\, |c'| > 0 \}
\]
the space of Sobolev immersions of order $k$. Note that we have the inclusions
\[
\on{Imm}^{\max(j,k)+1}(S^1,\R^2) \subseteq \on{Imm}^{j,k}(S^1,\R^2) \subseteq \on{Imm}^{\min(j,k)+1}(S^1,\R^2)\,,
\]
and if $j=k$, then
\[
\on{Imm}^{j,j}(S^1,\R^2) = \on{Imm}^{j+1}(S^1,\R^2)\,.
\]
The spaces can $\on{Imm}^{j,k}(S^1,\R^2)$ can be seen as a refinement of the Sobolev scale of function spaces.

\begin{theorem}
\label{thm:m3_spray_smooth}
For $k\geq 2$, the geodesic spray 
\[
\Xi^G : T\on{Imm}^{k+1,k+2}(S^1,\R^2)/\on{Tra} \to TT\on{Imm}^{k+1,k+2}(S^1,\R^2)/\on{Tra}
\]
of the metric $G$ is smooth.
\end{theorem}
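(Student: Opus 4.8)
The plan is to carry the geodesic spray over to the target of the $R$-transform, where Theorems~\ref{thm:ebin1970} and~\ref{thm:ebin1970_proj} apply, and then to pull the result back. First I would extend the isometry $R$ of Theorem~\ref{thm:m3_rmap} to Sobolev completions. In the global chart $c\mapsto(|c'|,\al)$ the space $\on{Imm}^{k+1,k+2}(S^1,\R^2)/\on{Tra}$ is a Hilbert manifold modelled on $H^{k+1}\x H^{k+2}$, and $R(c)=(\sqrt{|c'|},\al,\ka|c'|^2)=(\sqrt{|c'|},\al,|c'|\al')$ extends to a smooth injective map into $H^{k+1}(S^1,\R^3_\ast)$: the first and third components lie in $H^{k+1}$ (using that $H^{k+1}(S^1)$ is a Banach algebra closed under composition with smooth functions and that $|c'|$ is bounded away from $0$), while $\al\in H^{k+2}\subset H^{k+1}$. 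The crucial bookkeeping is that on the image the constraint $q_3=q_1^2q_2'$ of Theorem~\ref{thm:m3_im_closed} is consistent inside $H^{k+1}(S^1,\R^3_\ast)$ and, conversely, forces $q_2\in H^{k+2}$ (solve $q_2'=q_1\i{}^2q_3\in H^{k+1}$); this is exactly why the refined scale $\on{Imm}^{j,k}$ is the natural source for $R$. The pointwise computation in Theorem~\ref{thm:m3_rmap} shows that $R$ is an isometry for the $L^2$-metrics at this Sobolev regularity as well.

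Next I would invoke Theorem~\ref{thm:ebin1970} with $N=\R^3_\ast$ and Sobolev order $k+1\ge 3>\tfrac12\dim S^1+1=\tfrac32$: the $L^2$-metric $G^{L^2}$ on $H^{k+1}(S^1,\R^3_\ast)$ has a smooth geodesic spray $\Xi^{L^2}$, obtained by postcomposition with the smooth spray of the finite-dimensional manifold $(\R^3_\ast,g)$. It then remains to verify the hypotheses of Theorem~\ref{thm:ebin1970_proj}, namely that $\on{im}(R)$ is a smooth submanifold of $H^{k+1}(S^1,\R^3_\ast)$ whose orthogonal projection $\on{Proj}^{\on{im}}$ is smooth. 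I would assemble this projection in the two stages of Theorem~\ref{thm:m3_orth_proj}. The projection onto $T\on{im}(R)_{\on{op}}$, the tangent space to $H_{\on{diff}}\i(0)$, is given in Lemma~\ref{lem:m2_orth_proj_op} through the second-order elliptic equation $q_1^2k_2-(Ak_2')'=B$, whose solution operator is bounded, gains two derivatives (so that with $B\in H^k$ one gets $k_2\in H^{k+2}$ and the whole field $k$ back in $H^{k+1}(S^1,\R^3)$), and depends smoothly on the coefficients and hence on the foot point $q$. The remaining codimension-two closedness constraint $H_{\on{cl}}\i(0)$ contributes the smooth finite-rank correction $T(q)$, so that $\on{Proj}^{\on{im}}=T(q)\o\on{Proj}^{\on{im}}_{\on{op}}(q)$ is smooth.

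With these ingredients Theorem~\ref{thm:ebin1970_proj} gives that the spray of $G^{L^2}$ on $\on{im}(R)$ equals $\Xi^{\on{im}(R)}=\on{Proj}^{\on{im}}\o\Xi^{L^2}$ and is smooth. Since $R$ is a diffeomorphism of Hilbert manifolds and an isometry, I would finally transport the spray back, obtaining
\[
\Xi^G=TT(R\i)\o\Xi^{\on{im}(R)}\o TR=TT(R\i)\o\on{Proj}^{\on{im}}\o\Xi^{L^2}\o TR\,,
\]
which is smooth on $T\on{Imm}^{k+1,k+2}(S^1,\R^2)/\on{Tra}$, as claimed.

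I expect the main obstacle to be the submanifold-with-smooth-projection claim in the Sobolev category. The delicate point is the interplay of regularities: a priori $T_q\on{im}(R)_{\on{op}}$ is cut out by the relation $h_3=2q_1\i q_3h_1+q_1^2h_2'$, which loses a derivative, so one must check that this is a closed, complemented subspace of $H^{k+1}(S^1,\R^3)$ — equivalently, that solving for $h_2'=q_1\i{}^2(h_3-2q_1\i q_3h_1)$ automatically places $h_2$ in $H^{k+2}$ — and that the elliptic solve defining the projection returns a field in $H^{k+1}(S^1,\R^3)$ depending smoothly on $q$, uniformly over the foot point. Verifying this elliptic-regularity bookkeeping is the technical heart of the argument; once it is in place, Ebin's projection machinery and the isometry $R$ complete the proof.
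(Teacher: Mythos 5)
Your proposal is correct and takes essentially the same route as the paper's proof: extend the $R$-transform to the Sobolev scale, show its image is an embedded submanifold whose orthogonal projection is smooth (the elliptic solve for the $H_{\on{diff}}$-constraint followed by the finite-rank correction for $H_{\on{cl}}$), invoke Thms.~\ref{thm:ebin1970} and \ref{thm:ebin1970_proj} to obtain a smooth spray on the image, and pull it back through the diffeomorphism $R$. The only divergence is bookkeeping, and yours is the internally consistent version: you take $H^{k+1}(S^1,\R^3_\ast)$ as ambient space, which matches the domain $\on{Imm}^{k+1,k+2}(S^1,\R^2)$ (whose image has $q_1,q_3\in H^{k+1}$ and $q_2\in H^{k+2}$), whereas the paper works in $H^k(S^1,\R^3_\ast)$ and lists image regularities $(q_1,q_2,q_3)\in H^k\x H^{k+1}\x H^k$, an off-by-one relative to the definition \eqref{eq:custom_sob_scale}, since $c\in\on{Imm}^{k+1,k+2}$ gives $\sqrt{|c'|},\,\ka|c'|^2\in H^{k+1}$ and $\al\in H^{k+2}$.
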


Combining this theorem with the existence theorem for ODEs, the translation invariance of the 
geodesic spray from App. \ref{Sec:trans_inv} and Thm.~\ref{thm:max_domain}, we obtain the following 
corollary.  

\begin{corollary}
\label{cor:m3_solutions_smooth}
For $k\geq 2$ and initial conditions
$
(c_0, u_0) \in T\on{Imm}^{k+1,k+2}(S^1,\R^2)\,,
$ the 
geodesic equation has solutions in $\on{Imm}^{j+1,j+2}(S^1,\R^2)/\on{Tra}$ for each $2 \leq j \leq k$. The 
solutions depend $C^\infty$ on $t$ and the initial conditions and the domain of existence is 
independent of $j$.   

In particular for smooth initial conditions $(c_0, u_0) \in T\on{Imm}(S^1,\R^2)/\on{Tra}$ the geodesic 
equation has smooth solutions. 
\end{corollary}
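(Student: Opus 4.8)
The plan is to deduce the corollary from the smoothness of the geodesic spray (Thm.~\ref{thm:m3_spray_smooth}) by the standard machinery for ODEs on Hilbert manifolds, combined with the translation invariance from App.~\ref{Sec:trans_inv} and the no-loss-of-derivatives result Thm.~\ref{thm:max_domain}. First I would fix $k\ge 2$ and work on the Hilbert manifold $\on{Imm}^{k+1,k+2}(S^1,\R^2)/\on{Tra}$. By Thm.~\ref{thm:m3_spray_smooth} the geodesic spray $\Xi^G$ is a smooth vector field on its tangent bundle, so the local existence-and-uniqueness theorem for ODEs in Banach spaces yields, for each $(c_0,u_0)\in T\on{Imm}^{k+1,k+2}(S^1,\R^2)$, a unique maximal integral curve $t\mapsto (c(t),c_t(t))$; this curve is smooth in $t$ and, by the theorem on smooth dependence on parameters, smooth as a function of the initial data. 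Writing ``level $j$'' for $\on{Imm}^{j+1,j+2}$, this settles smooth existence, uniqueness and smooth dependence at each individual level.

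Next I would establish the consistency of the solutions across the levels $2\le j\le k$. Because the inclusions $\on{Imm}^{k+1,k+2}\subseteq\on{Imm}^{j+1,j+2}$ are continuous and the spray at the finer level is the restriction of the spray at the coarser level, any solution with initial data in $T\on{Imm}^{k+1,k+2}$ is simultaneously a solution of the same ODE at level $j$; by uniqueness the two integral curves coincide wherever both are defined. Hence there is a single curve representing the geodesic simultaneously in all the spaces $\on{Imm}^{j+1,j+2}$, and the only remaining issue is the size of its interval of existence.

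The key step, and the main obstacle, is the independence of the maximal existence interval from $j$. Writing $J_j$ for the maximal interval at level $j$, the containment $J_k\subseteq J_j$ is immediate from the previous paragraph. For the reverse inclusion I would argue by contradiction: if the level-$j$ solution existed on $[0,T)$ but $T\notin J_k$, then the finer norm of the solution would have to blow up as $t\to T$ while the coarser norm stayed bounded. This is exactly what Thm.~\ref{thm:max_domain} forbids: as long as the solution remains in a bounded set of the coarser topology on $[0,T)$, no loss of derivatives occurs and the solution persists at the finer level up to $T$, whence $T\in J_k$, a contradiction; therefore $J_j=J_k$. I expect the delicate point to be that the operator $L$ is not elliptic, gaining four derivatives only in the normal direction and two in the tangential direction, so the uniform Sobolev scale is inadequate and one must work with the refined scale $\on{Imm}^{j,k}$ built from the pair $(|c'|,\al)$; verifying that Thm.~\ref{thm:max_domain} applies to precisely this scale is the crux.

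Finally I would pass to smooth data. Translation invariance (App.~\ref{Sec:trans_inv}) guarantees that the flow descends to the quotient $\on{Imm}/\on{Tra}$ and commutes with the inclusions, so the common existence interval $J=J_k$ does not depend on $k$ either. For smooth initial data $(c_0,u_0)\in T\on{Imm}(S^1,\R^2)/\on{Tra}$, the solution therefore lives in every $\on{Imm}^{j+1,j+2}$ on the fixed interval $J$; intersecting over all $j$ and using $\bigcap_j H^j=C^\infty$ shows that the geodesic is smooth in $\th$ as well as in $t$, which is the asserted smoothness.
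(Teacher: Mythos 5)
Your overall route is exactly the paper's: Thm.~\ref{thm:m3_spray_smooth} plus the Banach-space ODE theorem gives local existence, uniqueness and smooth dependence at each level of the scale, Thm.~\ref{thm:max_domain} gives the level-independence of the existence interval, and intersecting the scale yields smoothness for smooth data. However, your account of the key step misstates the mechanism of Thm.~\ref{thm:max_domain}. That theorem is \emph{not} a continuation criterion of the form ``bounded in the coarse norm $\Rightarrow$ no blow-up in the fine norm''; its only hypothesis is translation invariance of the spray, and its proof is the direct Ebin--Marsden trick: differentiating $\exp(T(\si)q_0, t\,T(\si)v_0)=T(\si)\exp(q_0,tv_0)$ in $\si$ at $\si=0$ gives $T_{(q_0,tv_0)}\exp.(q_0',tv_0')=\p_\th\exp(q_0,tv_0)$, so the solution gains one $\th$-derivative for free on the whole coarse-level interval, with no boundedness assumption and no contradiction argument needed. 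Your blow-up scaffolding imports a hypothesis the theorem neither has nor needs; the correct move is simply to apply the theorem iteratively from level $j$ up to level $k$, after transporting the spray via the $R$-transform into $H^j(S^1,\R^3_\ast)$ (Lem.~\ref{lem:sob_im_r}), where the refined scale $\on{Imm}^{j+1,j+2}$ becomes the plain Sobolev scale and translation invariance holds because the metric $g$, the $L^2$-structure and the constraints $H_{\on{diff}}$, $H_{\on{cl}}$ all commute with the shift $T(\si)$. This is the verification you rightly flag as the crux, but then leave unresolved.

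Second, you conflate two different translation actions. The ``translation invariance'' of App.~\ref{Sec:trans_inv} is invariance under the parameter shifts $T(\si)q(\th)=q(\th+\si)$ of $S^1$; it has nothing to do with the spatial translation group $\on{Tra}\cong\R^2$, so it cannot be what ``guarantees that the flow descends to the quotient $\on{Imm}/\on{Tra}$'' --- and no such descent is needed, since Thm.~\ref{thm:m3_spray_smooth} already defines the spray on $\on{Imm}^{k+1,k+2}(S^1,\R^2)/\on{Tra}$. (If one insists on a $\th$-shift-invariant realization of the quotient, the section $\{\int_{S^1} c\ud s=0\}$ works, while $\{c(0)=0\}$ does not.) Likewise, the statement ``the common existence interval does not depend on $k$'' is not a consequence of any quotient argument; it is precisely the iterated conclusion $J_j=J_k$ of Thm.~\ref{thm:max_domain}, which you already have from the previous step. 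With these two corrections --- replace the blow-up argument by the direct application of Thm.~\ref{thm:max_domain} on the $R$-transformed scale, and drop the spurious descent claim --- your final step (one curve lying in every $\on{Imm}^{j+1,j+2}$ on the common interval, hence $C^\infty$ in $\th$ and $t$) is correct and matches the paper's proof.
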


\begin{proof}[Proof of Theorem]
The $R$-transform
\[
R(c)=(\sqrt{|c'|}, \al, \ka|c'|^2)
\]
extends to a smooth map
\[
R:\on{Imm}^{k+1,k+2}(S^1,\R^2)/\on{Tra} \to H^k(S^1,\R^3_{\ast})
\]
with $\R^3_\ast = \R_{>0}\x S^1\x\R$ and the image of the map is given by
\[
\on{im}(R) = \left\{ (q_1,q_2,q_3) \,:\,
\begin{array}{l}
(1)\, q_1 \in H^k, q_2 \in H^{k+1}, q_3 \in H^k \\
(2)\, q_2' = q_1^{-2}q_3 \\
(3)\, \int_{S^1} q_1^2 \exp(i q_2) \ud \th=0
\end{array} \right\} \,.
\]
By Lem.~\ref{lem:sob_im_r} it is an embedded submanifold of $H^k(S^1,\R^3_\ast)$. Let $g$ be the 
Riemannian metric \eqref{metric3_g} on $\R^3_\ast$ and $G^{L^2}$ the $L^2$-metric~\eqref{l2_met_r3} 
on $H^{k}(S^1,\R^3_\ast)$. The same proof as for Thm.~\ref{thm:m3_rmap} shows that the 
$R$-transform is an isometry between $(\on{Imm}^{k+1,k+2}(S^1,\R^2)/\on{Tra}, G)$ and 
$(H^{k}(S^1,\R^3_\ast), G^{L^2})$.    

Denote by
\[
\io : \on{im}(R) \to H^{k}(S^1,\R^3_\ast)
\]
the inclusion map. The orthogonal projection to $T\on{im}(R)$ is a map
\[
\on{Proj}^{\on{im}} : \io^\ast TH^k(S^1,\R^3_\ast)\upharpoonright \on{im}(R) \to T \on{im}(R)\,,
\]
and it is given by the same formulas \eqref{eq:m3_orth_proj_op} and \eqref{eq:m3_orth_proj} as in 
Thm.~\ref{thm:m3_orth_proj}. It is shown in Lem.~\ref{lem:m3_proj_hk} that $\on{Proj}^{\on{im}}$ 
can be extended to a smooth map  
\[
\on{Proj}^{\on{im}} : TH^k(S^1,\R^3_\ast) \to T H^k(S^1,\R^3_\ast)\,.
\]
Denote by
\[
\Xi^{L^2} : TH^k (S^1,\R^3_\ast) \to TTH^k(S^1,\R^3_\ast)
\]
the geodesic spray of the $G^{L^2}$-metric on $H^k(S^1,\R^3_\ast)$. It is smooth by Thm.~
\ref{thm:ebin1970}. Theorem \ref{thm:ebin1970_proj} shows that the geodesic spray of the 
$G^{L^2}$-metric restricted to $\on{im}(R)$ is given by  
\[
\Xi^{\on{im}} = T\on{Proj}^{\on{im}} \o \Xi^{L^2} \o T\io : T\on{im}(R) \to TT\on{im}(R)\,,
\]
and that this map is smooth as well.
The geodesic sprays $\Xi^{L^2}$ and $\Xi^{\on{im}}$ are $TR$-related, i.e., the following diagram 
commutes. 
\[
\xymatrix{
TT\on{Imm}^{k+1,k+2}/\on{Tra} \ar[r]^-{TTR} & TT\on{im}(R) \\
T\on{Imm}^{k+1,k+2}/\on{Tra} \ar[r]^-{TR} \ar[u]^{\Xi^{G}} & T\on{im}(R) \ar[u]_{\Xi^{\on{im}}} \\
}
\]
Since $\on{im}(R)$ is an embedded submanifold of $H^k(S^1,\R^3_{\ast})$ the map 
\[
R : \on{Imm}^{k+1,k+2}(S^1,\R^3_\ast)/\on{Tra} \to \on{im}(R)
\]
is a diffeomorphism and we can conclude that $\Xi^G$ is smooth.
\end{proof}

\begin{lemma}
\label{lem:sob_im_r}
Let $k\geq 2$. The image of the $R$-transform
\[
R: \left\{
\begin{array}{ccc}
\on{Imm}^{k+1,k+2}(S^1,\R^2)/\on{Tra} &\to &H^k(S^1,\R^3_{\ast}) \\
c &\mapsto &(\sqrt{|c'|}, \al, \ka|c'|^2)
\end{array} \right.\,,
\]
is an embedded submanifold of $H^k(S^1,\R^3_{\ast})$.
\end{lemma}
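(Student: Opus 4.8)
The plan is to transfer the smooth-category argument of Theorem~\ref{thm:m3_im_closed} to the Sobolev scale \eqref{eq:custom_sob_scale}; the geometry is identical, and the only genuinely new work is the bookkeeping of Sobolev orders. As in the smooth case I would write $\on{im}(R) = H_{\on{diff}}\i(0) \cap H_{\on{cl}}\i(0)$ with $H_{\on{diff}}$ and $H_{\on{cl}}$ as in \eqref{eq:m3_constraints}. Since $k \geq 2 > 1/2$, all the spaces $H^k, H^{k+1}, H^{k+2}$ on $S^1$ are Banach algebras and satisfy the module estimates $H^a\cdot H^b \hookrightarrow H^{\min(a,b)}$; together with the smoothness of $t\mapsto t^{\pm 1/2}$ on $\R_{>0}$ and of $\al\mapsto e^{i\al}$ on $H^\ell(S^1,S^1)$, this makes $R$, its proposed inverse $q\mapsto \int_0^\th q_1^2\exp(iq_2)\ud\si$, and the two constraint maps smooth between the relevant Hilbert manifolds. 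In particular $R$ is a smooth bijection onto $\on{im}(R)$ with smooth inverse, so once $\on{im}(R)$ is exhibited as a splitting submanifold the embedding claim follows at once.

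First I would straighten the differential constraint exactly as in the proof of Theorem~\ref{thm:m3_im_closed}, via
\[
\Ph(q_1,q_2,q_3) = (q_1,\, q_2,\, q_3 + q_1^2 q_2')\,,
\]
so that $H_{\on{diff}}\i(0) = \Ph(\{q_3 = 0\})$. The key point, and the hard part, is that $\Ph$ contains the first derivative $q_2'$ and would therefore lose one Sobolev degree if all three components were modelled in a single space $H^k$; in that case $\Ph$ could not be a diffeomorphism of the ambient Hilbert manifold and the straightening would break down. This is precisely why the refined two-index scale \eqref{eq:custom_sob_scale} is needed: it models the turning-angle component $q_2=\al$ one Sobolev degree higher than the curvature component $q_3=\ka|c'|^2$ (while $q_1=\sqrt{|c'|}$ sits at the same level as $q_3$), so that $q_2'$ and hence $q_1^2 q_2'$ land in exactly the same space as $q_3$. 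With this matching I would verify, using the module estimates above, that $\Ph$ and $\Ph\i(q)=(q_1,q_2,q_3-q_1^2q_2')$ are mutually inverse smooth self-maps of the ambient manifold, hence that $\Ph$ is a diffeomorphism and $H_{\on{diff}}\i(0)$ a splitting submanifold diffeomorphic to the linear slice $\{q_3=0\}$.

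It then remains to impose the closedness constraint. Since $H_{\on{cl}}$ is independent of $q_3$, pulling back by $\Ph$ reduces the problem to showing that $H_{\on{cl}}\i(0)$ is a splitting submanifold of codimension $2$ inside $\{q_3=0\}$. For this I would run the implicit-function-theorem argument of Lemma~\ref{lem:m1_imcl_submf}, now in its classical Hilbert-manifold form rather than the convenient one, checking that the two $G^{L^2}$-gradients $\on{grad}^{L^2}H_{\on{cl}}^1(q)$ and $\on{grad}^{L^2}H_{\on{cl}}^2(q)$ of \eqref{eq:m3_hcl_grad} are linearly independent at every $q\in H_{\on{cl}}\i(0)$; as these expressions are pointwise and already have vanishing third component, this is the same computation as in the smooth case and does not see the Sobolev order. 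Transporting back through $\Ph$ exhibits $\on{im}(R)=H_{\on{diff}}\i(0)\cap H_{\on{cl}}\i(0)$ as a splitting submanifold of the ambient Sobolev manifold, which is the assertion. I expect essentially all of the effort to go into the second paragraph, namely confirming that the orders in the scale are arranged so that $\Ph$ remains a diffeomorphism within one fixed Hilbert manifold; the rest is either a direct transcription of the smooth-category proofs or a routine use of the Sobolev module estimates.
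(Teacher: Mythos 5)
Your treatment of the closedness constraint (implicit function theorem, linear independence of the two gradients) matches the paper, but the core of your argument proves a different statement than the one asserted. You straighten $H_{\on{diff}}\i(0)$ with the differentiating map $\Ph(q_1,q_2,q_3)=(q_1,q_2,q_3+q_1^2q_2')$ inside the mixed-order ambient manifold $H^k\x H^{k+1}\x H^k$, and so you exhibit $\on{im}(R)$ as a submanifold of \emph{that} space. The lemma, however, claims that $\on{im}(R)$ is an embedded submanifold of $H^k(S^1,\R^3_\ast)$, where all three components carry the same order $k$. These are not equivalent: the inclusion $H^k\x H^{k+1}\x H^k\hookrightarrow H^k(S^1,\R^3_\ast)$ is a continuous \emph{dense} injection, not an embedding onto a closed subspace, so charts of the finer manifold do not extend to charts of $H^k(S^1,\R^3_\ast)$, and no general principle transfers submanifoldness from the finer space to the coarser one. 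The distinction is exactly what the lemma is for: in Thm.~\ref{thm:m3_spray_smooth} one applies Thms.~\ref{thm:ebin1970} and \ref{thm:ebin1970_proj} (Ebin--Marsden) to the $L^2$-metric on the standard Sobolev manifold $H^k(S^1,\R^3_\ast)$. Your mixed-order manifold cannot serve as a substitute ambient space there, because the pointwise geodesic spray $\Xi^g\o X$ does not preserve it: the Christoffel symbols of $g$ couple the components (e.g.\ the $q_2$-equation contains $q_1^{-1}\dot q_1\dot q_2$), so the second component of the spray has only $H^k$ regularity, not $H^{k+1}$.

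The paper resolves the derivative-loss problem you correctly identified in the opposite way: it keeps the ambient space $H^k(S^1,\R^3_\ast)$ and changes the straightening map, exploiting that integration, unlike differentiation, does not lose a derivative. It first cuts out the winding-number set $C_\al\i(2\pi\mathbb Z)$, where $C_\al(q)=\int_{S^1}q_1^{-2}q_3\ud\th$, a submanifold by the inverse function theorem (this condition is what makes the indefinite integral well defined as an $S^1$-valued map), and then uses the \emph{integrating} map $\Ph(u_1,u_2,u_3)=\bigl(u_1,\,u_2+\int u_1^{-2}u_3,\,u_3\bigr)$, a smooth map with smooth inverse $(q_1,q_2,q_3)\mapsto\bigl(q_1,\,q_2-\int q_1^{-2}q_3,\,q_3\bigr)$, which carries the slice $\{u_2=0\}$ bijectively onto $C_\al\i(2\pi\mathbb Z)\cap C_{\on{diff}}\i(0)$. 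This exhibits the differential-constraint set as a submanifold of $H^k(S^1,\R^3_\ast)$ itself; the extra regularity $q_2\in H^{k+1}$ then appears as a property of the points of the image rather than as a feature of the ambient space, and the closedness constraint is imposed afterwards exactly as in your last paragraph. Note also that your reading of \eqref{eq:custom_sob_scale} is off: the two-index scale is used for the domain $\on{Imm}^{j,k}(S^1,\R^2)$ of $R$, while the target of $R$ is deliberately kept at a single Sobolev order.
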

\begin{proof}
Define the functions
\begin{align*}
C_\al &: H^k(S^1,\R^3_\ast) \to \R &
C_\al(q) &= \int_{S^1} q_1^{-2} q_3 \ud \th \\
C_{\on{diff}} &: H^k\x H^{k+1} \x H^k \to H^k &
C_{\on{diff}} &=  q_2' - q_1^{-2}q_3 \\
C_{\on{cl}} &: H^k(S^1,\R^3_\ast) \to \R^2 &
C_{\on{cl}}(q) &= \int_{S^1} q_1^{2} \exp(iq_2) \ud \th\,,
\end{align*}
which allow us to write the image $\on{im}(R)$ as
\[
\on{im}(R) = H^k(S^1,\R^3_\ast) \cap C_\al\i(2\pi\mathbb Z) \cap C_{\on{diff}}\i(0) \cap
C_{\on{cl}}\i(0)\,.
\]
That $H^k(S^1,\R^3_\ast) \cap C_\al\i(2\pi\mathbb Z)$ is a submanifold of $H^k(S^1,\R^3_\ast)$ can 
be seen via the inverse function theorem on Banach spaces. Let $\Ph$ be the map 
\[
\Ph:\left\{ \begin{array}{ccc}
H^k(S^1,\R^3_\ast) \cap C_\al\i(2\pi\mathbb Z) &\to &H^k(S^1,\R^3_\ast) \\
(u_1,u_2,u_3) &\mapsto& \left( u_1, u_2 + \int u_1^{-2}u_3, u_3\right)
\end{array} \right.\,,
\]
with $\int u_1^{-2}u_3$ denoting the indefinite integral of $u_1^{-2}u_3$. Then $\Ph$ is a 
bijection, when restricted to 
\[
\Ph : H^k(S^1,\R^3_\ast) \cap C_\al\i(2\pi\mathbb Z) \cap \{ u_2 = 0 \} \overset{\cong}\longrightarrow H^k(S^1,\R^3_\ast) 
\cap C_\al\i(2\pi\mathbb Z) \cap H_{\on{diff}}\i(0)\,.
\]
This implies that
\[
H^k(S^1,\R^3_\ast) \cap C_\al\i(2\pi\mathbb Z) \cap C_{\on{diff}}\i(0) \hookrightarrow H^k(S^1,\R^3_\ast) 
\cap C_\al\i(2\pi\mathbb Z)
\]
is a submanifold. Finally, that
\[
H^k(S^1,\R^3_\ast) \cap C_\al\i(2\pi\mathbb Z) \cap C_{\on{diff}}\i(0) 
\cap C_{\on{cl}}\i(0) \hookrightarrow H^k(S^1,\R^3_\ast) \cap C_\al\i(2\pi\mathbb Z) \cap C_{\on{diff}}\i(0)
\]
is a submanifold can again be shown via the implicit function theorem.
\end{proof}

\begin{lemma}
\label{lem:m3_proj_hk}
Let $k\geq 2$. The orthogonal projection $\on{Proj}^{\on{im}}$ defined in \eqref{eq:m3_orth_proj} 
can be extended to a smooth map 
\[
\on{Proj}^{\on{im}} : TH^k(S^1,\R^3_\ast) \to TH^k(S^1,\R^3_\ast)\,.
\]
\end{lemma}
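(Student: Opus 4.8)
The plan is to extend $\on{Proj}^{\on{im}}$ by the very formulas that define it, namely \eqref{eq:m3_orth_proj_op} for $\on{Proj}^{\on{im}}_{\on{op}}$, the gradient formulas \eqref{eq:m3_hcl_grad}, and the two-step decomposition \eqref{eq:m3_orth_proj}, $\on{Proj}^{\on{im}}(q)=T(q)\o\on{Proj}^{\on{im}}_{\on{op}}(q)$, and to verify that each factor is smooth when $q$ and the tangent vector are only of Sobolev class $H^k$. Since $q$ takes values in $\R^3_\ast=\R_{>0}\x S^1\x\R$ and $S^1$ is compact, we have $q_1\ge c>0$ and hence $q_1^8+q_3^2\ge c^8>0$, so all the rational expressions occurring are well defined. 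Throughout I would use that for $k\ge 1$ the space $H^k(S^1)$ is a Banach algebra, that composition with smooth maps is a smooth operation on $H^k(S^1)$, and that division by a function bounded away from zero is smooth; these reduce the smoothness of all \emph{algebraic} ingredients to routine bookkeeping, and isolate the genuine analytic content in the single second-order ODE that determines the middle component.

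First I would treat $\on{Proj}^{\on{im}}_{\on{op}}$. By \eqref{eq:m3_orth_proj_op} its value $k=(k_1,k_2,k_3)$ is found by solving
\[
q_1^2k_2-\left(Ak_2'\right)'=B,\qquad A=\frac{q_1^6}{q_1^8+q_3^2},
\]
for $k_2$ and then reading off $k_1,k_3$ algebraically. For $q\in H^k(S^1,\R^3_\ast)$ and a tangent vector $h\in H^k$ one checks that $A\in H^k$ and $B\in H^{k-1}$, both depending smoothly on $(q,h)$. The operator $L_q\colon k_2\mapsto q_1^2k_2-(Ak_2')'$ is formally self-adjoint, uniformly elliptic (as $A\ge c'>0$) and positive, with $\langle L_qk_2,k_2\rangle_{L^2}=\int_{S^1}q_1^2k_2^2+A(k_2')^2\ud\th>0$ for $k_2\ne0$; since its coefficients lie in $H^k\subset C^1$ once $k\ge 2$, standard elliptic theory on the closed manifold $S^1$ makes $L_q\colon H^{k+1}\to H^{k-1}$ a Banach-space isomorphism. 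The assignment $q\mapsto L_q\in L(H^{k+1},H^{k-1})$ is smooth, and inversion is smooth on the open set of isomorphisms, so $k_2=L_q\i B\in H^{k+1}$ depends smoothly on $(q,h)$. I would stress that this two-derivative gain is essential rather than cosmetic: it yields $k_2'\in H^k$, which is exactly what the formulas for $k_1$ and $k_3$ in \eqref{eq:m3_orth_proj_op} require in order to land in $H^k$ and not merely in $H^{k-1}$. Composing with the bounded inclusion $H^{k+1}\hookrightarrow H^k$ then shows that $(q,h)\mapsto\on{Proj}^{\on{im}}_{\on{op}}(q).h$ is a smooth map $TH^k(S^1,\R^3_\ast)\to TH^k(S^1,\R^3_\ast)$.

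It remains to handle the finite-rank correction $T(q)$. The gradients in \eqref{eq:m3_hcl_grad} are built from $q_1$ and $\cos q_2,\sin q_2$, hence define smooth maps $H^k(S^1,\R^3_\ast)\to H^k$; by the previous paragraph the vectors $v^i(q)=\on{Proj}^{\on{im}}_{\on{op}}(q).\on{grad}^{L^2}H_{\on{cl}}^i(q)$ are therefore smooth $H^k$-valued functions of $q$. By the same reasoning as in Thm.~\ref{thm:m3_orth_proj} the pair $v^1(q),v^2(q)$ is linearly independent, so the $2\x2$ Gram matrix $m_{ij}(q)=G_q^{L^2}(v^i(q),v^j(q))$ is invertible and smooth in $q$, and writing $\big(m^{ij}(q)\big)$ for its inverse the explicit formula
\[
T(q).w=w-\sum_{i,j}m^{ij}(q)\,G_q^{L^2}\big(w,v^j(q)\big)\,v^i(q)
\]
exhibits $(q,w)\mapsto T(q).w$ as a smooth map $TH^k\to TH^k$. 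Composing the two smooth factors gives the asserted smooth extension of $\on{Proj}^{\on{im}}$. The one real obstacle I anticipate is the elliptic step of the second paragraph: one must confirm that $L_q$ is genuinely an isomorphism $H^{k+1}\to H^{k-1}$ with the limited coefficient regularity at hand — this is precisely where the hypothesis $k\ge 2$ is consumed, keeping the leading coefficient $A$ in $C^1$ — and that its inverse depends smoothly on $q$; everything else is algebra and finite-dimensional linear algebra once this is in place.
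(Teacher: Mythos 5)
Your proposal is correct and follows essentially the same route as the paper: smoothness of $\on{Proj}^{\on{im}}_{\on{op}}$ via the elliptic equation $q_1^2k_2-(Ak_2')'=B$ with the two-derivative gain $k_2\in H^{k+1}$ (the paper delegates this step to its auxiliary regularity Lemma~\ref{lem:pde_regular}, which is exactly the isomorphism-plus-smooth-inversion statement you prove inline), followed by smoothness of the finite-rank correction $T(q)$ and composition. The only cosmetic difference is that you invert the Gram matrix of $v^1(q),v^2(q)$ where the paper orthonormalizes them via Gram--Schmidt; both rest on the same linear-independence observation from Thm.~\ref{thm:m3_orth_proj}.
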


\begin{proof}
First we show that $\on{Proj}^{\on{im}}_{\on{op}} : TH^k \to TH^k$ is a smooth map. Using the 
notation of Lem.~\ref{lem:m2_orth_proj_op} we see that for $(q,h) \in TH^k$ we have $A 
\in H^k(S^1,\R)$ and $B \in H^{k-1}(S^1,\R)$. The second component $k_2$ of 
$\on{Proj}^{\on{im}}_{\on{op}}$ is given as the solution of the equation   
\[
q_1^2k_2 - (Ak_2')' = B
\]
and by Lem.~\ref{lem:pde_regular} we know that such a $k_2 \in H^{k+1}(S^1,\R)$ exists and depends 
smoothly on $(q,h)$. Then \eqref{eq:m3_orth_proj_op} shows that $k_1,k_3 \in H^k(S^1,\R)$ thus 
showing $\on{Proj}^{\on{im}}_{\on{op}}$ to be well-defined and smooth.  

The smoothness of the maps $q \mapsto \on{grad}^{L^2} H_{\on{cl}}^i(q)$ for $i=1,2$ given by 
\eqref{eq:m3_hcl_grad} is clear and we see by inspection that $\on{grad}^{L^2} H_{\on{cl}}^i(q) 
\in H^k(S^1,\R^3)$. Therefore $q \mapsto \on{Proj}^{\on{im}}(q).\on{grad}^{L^2} H_{\on{cl}}^i(q) =: 
v^i(q)$ is also smooth. Let $w^1(q), w^2(q)$ be an orthonormal basis of $\on{span}\{v^1(q), 
v^2(q)\}$, constructed, e.g., via Gram-Schmidt. Then    
\[
T(q).h = h - \langle h, w^1(q) \rangle w^1(q) - \langle h, w^2(q) \rangle w^2(q)
\]
is smooth as well. Thus we conclude that the composition
\[
\on{Proj}^{\on{im}}(q).h = T(q).\on{Proj}^{\on{im}}_{\on{op}}(q).h
\] 
is smooth as required.
\end{proof}

\subsection{Well-posedness for the first and second metrics}
\label{Sec:wp_1_and_2}

The statements of Thm.~\ref{thm:m3_spray_smooth} and Cor.~\ref{cor:m3_solutions_smooth} also hold 
for the metrics \eqref{metric1} and \eqref{metric2} from Sect.~\ref{Sec:metric1} and Sect.~
\ref{Sec:metric2} on the space of closed curves. In the proof of Thm.~\ref{thm:m3_spray_smooth} we 
need to change the $R$-transform used to represent the metric and prove the analogues of 
Lem.~\ref{lem:sob_im_r} and Lem.~\ref{lem:m3_proj_hk}, the rest of the proof will remain the same.  

Let $G$ be the metric \eqref{metric2} from Sect.~\ref{Sec:metric2},
\[
G_c(h,k) = \int_{S^1} \langle D_s h, D_s k \rangle  + \langle D_s^2 h, n \rangle \langle D_s^2 k, n \rangle \ud s\,.
\]
The image of the $R$-transform \eqref{m2_rmap} is a submanifold in appropriate Sobolev extensions.

\begin{lemma}
\label{lem:m2_sob_im_r}
Let $k\geq 2$. The image of the $R$-transform
\[
R: \left\{
\begin{array}{ccc}
\on{Imm}^{k+1,k+2}(S^1,\R^2)/\on{Mot} &\to &H^k(S^1,\R_{>0}\x\R) \\
c &\mapsto &(\sqrt{|c'|}, \ka|c'|^2)
\end{array} \right.
\]
is an embedded submanifold of $H^k(S^1,\R_{>0}\x\R)$.
\end{lemma}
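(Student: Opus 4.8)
The plan is to follow the proof of Lemma~\ref{lem:sob_im_r} almost line by line, taking advantage of the fact that the $R$-transform \eqref{m2_rmap} for \eqref{metric2} has only two components, $(\sqrt{|c'|},\ka|c'|^2)$, and therefore carries one constraint fewer than the third metric: since there is no separate $\al$-component, neither the differential constraint $C_{\on{diff}}$ nor the straightening diffeomorphism $\Ph$ of Lemma~\ref{lem:sob_im_r} is needed. On the ambient Hilbert manifold $H^k(S^1,\R_{>0}\x\R)$ I would introduce the two maps
\[
C_\al(q)=\int_{S^1}q_1^{-2}q_2\ud\th,\qquad C_{\on{cl}}(q)=\int_{S^1}q_1^2\exp\big(i\,\al(q)\big)\ud\th,\qquad \al(q)(\th)=\int_0^\th q_2q_1^{-2}\ud\si,
\]
with values in $\R$ and $\R^2$ respectively. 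For $k\geq 2$ both are smooth, because $H^k(S^1)$ is a Banach algebra stable under division by everywhere-positive functions and $u\mapsto e^{iu}$ acts smoothly on $H^{k+1}(S^1)$. Using the reconstruction $|c'|=q_1^2$, $\ka=q_1^{-4}q_2$, $\al'=q_1^{-2}q_2$ from Thm.~\ref{Thm:Rmapopen:metric2}, I would verify that $q\in\on{im}(R)$ precisely when the reconstructed tangent $c'=q_1^2e^{i\al(q)}$ is periodic and the curve closes up, i.e.
\[
\on{im}(R)=H^k(S^1,\R_{>0}\x\R)\cap C_\al^{-1}(2\pi\mathbb Z)\cap C_{\on{cl}}^{-1}(0).
\]
Here the condition $C_\al(q)\in2\pi\mathbb Z$ records that the total turning is an integer multiple of $2\pi$, which is exactly what forces $c'$, and hence $c$, to close up to an immersion of $S^1$; this is the one point where the Sobolev version is sharper than the smooth Lemma~\ref{lem:m2_imcl_submf}.

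I would then obtain the submanifold structure in two layers. First, $dC_\al(q).h=\int_{S^1}(-2q_1^{-3}q_2h_1+q_1^{-2}h_2)\ud\th$ is a nonzero bounded functional at every $q$ (choosing $h=(0,q_1^2)$ gives the value $2\pi$), so $C_\al$ is a submersion and $C_\al^{-1}(2\pi\mathbb Z)$, as the preimage of a discrete set, is a splitting submanifold of codimension one by the inverse function theorem on Hilbert spaces. Second, restricting $C_{\on{cl}}$ to $C_\al^{-1}(2\pi\mathbb Z)$ and showing that its differential maps onto $\R^2$ along $\on{im}(R)$, the implicit function theorem exhibits $\on{im}(R)$ as a further splitting, hence embedded, submanifold of $H^k(S^1,\R_{>0}\x\R)$.

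The surjectivity of $dC_{\on{cl}}$ restricted to $\ker dC_\al$ onto $\R^2$ is the only step that demands genuine work, and I expect the main obstacle to lie here. It is equivalent to the linear independence of the three differentials $dC_\al$, $dC_{\on{cl}}^1$, $dC_{\on{cl}}^2$ at each $q\in\on{im}(R)$, i.e. of the corresponding $G^{L^2}$-gradients $\on{grad}C_\al(q)$ and $\on{grad}H_{\on{cl}}^1(q),\on{grad}H_{\on{cl}}^2(q)$. The independence of the two closedness gradients \eqref{eq:m2_grad_hcl} was already verified pointwise in the smooth category in Lemma~\ref{lem:m2_imcl_submf}, and that computation transfers unchanged; it then remains to see that $\on{grad}C_\al(q)$, read off from the formula for $dC_\al$ above, is not a linear combination of the other two, which is again a pointwise check. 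From the explicit formulas one also reads that all three gradients lie in $H^k(S^1,\R^2)$ and depend smoothly on $q$, so the Banach implicit function theorem applies. Finally I would confirm that $R$ is a bijection onto $\on{im}(R)$ with inverse the reconstruction map of Thm.~\ref{Thm:Rmapopen:metric2}; since the primitive $\al(q)$ gains one derivative the Sobolev orders match, so $R$ restricts to a diffeomorphism onto the submanifold produced above. This last index bookkeeping is routine but is the place calling for the most care.
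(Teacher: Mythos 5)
Your proposal is correct in substance, but it is not the route the paper takes for this lemma --- and the difference is worth spelling out. The paper's own proof is two lines: it asserts that $\on{im}(R)=\{q\in H^k(S^1,\R_{>0}\x\R)\,:\,H_{\on{cl}}(q)=0\}$, notes that $0$ is a regular value of $H_{\on{cl}}$ (with the gradients \eqref{eq:m2_grad_hcl}), and applies the implicit function theorem in Banach spaces; there is no turning-number condition and no layered structure. You instead transplant the argument of Lem.~\ref{lem:sob_im_r} for the third metric, keeping the two constraints $C_\al\i(2\pi\mathbb Z)$ and $C_{\on{cl}}\i(0)$. Your characterization of the image is in fact the correct one, and the paper's is not: $H_{\on{cl}}(q)=0$ alone does not force the reconstructed curve to close up smoothly. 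Concretely, let $c:[0,2\pi]\to\R^2$ be a smooth ``teardrop'', i.e.\ $c(0)=c(2\pi)$ but $c'(0)\ne c'(2\pi)$, traversed at constant speed with curvature vanishing identically near both endpoints. Then $q=(\sqrt{|c'|},\ka|c'|^2)$ extends to a smooth, hence $H^k$, periodic function with $q_1>0$ and $H_{\on{cl}}(q)=0$, yet $C_\al(q)=\int_{S^1}q_1^{-2}q_2\ud\th\notin2\pi\mathbb Z$, so the reconstructed tangent $q_1^2e^{i\al(q)}$ is not periodic and $q\notin\on{im}(R)$. So the extra constraint you impose is genuinely needed (the same remark applies to the smooth Lem.~\ref{lem:m2_imcl_submf}, whose image really has codimension $3$, not $2$), and your two-layer argument closes a real gap in the paper's own proof.

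Two items you defer do need to be carried out, and both go through. First, the independence of $dC_\al,dC_{\on{cl}}^1,dC_{\on{cl}}^2$ at every $q$ with $H_{\on{cl}}(q)=0$: suppose $a\,dC_\al+b_1\,dC^1_{\on{cl}}+b_2\,dC^2_{\on{cl}}=0$ and test on variations $h$ satisfying $-2q_1^{-3}q_2h_1+q_1^{-2}h_2\equiv0$, which kill $d\al(q).h$ and hence $dC_\al.h$; this gives
\[
\int_{S^1}2q_1h_1\,\on{Re}\bigl((b_1-ib_2)e^{i\al(q)}\bigr)\ud\th=0
\]
for all $h_1$, so $\on{Re}\bigl((b_1-ib_2)e^{i\al(q)}\bigr)\equiv0$; if $(b_1,b_2)\ne(0,0)$ this confines $e^{i\al(q)}$ to a line, forcing $\al(q)$ to be constant, which contradicts $H_{\on{cl}}(q)=0$ since $q_1>0$; then $a=0$ because $dC_\al\ne0$. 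Second, the ``index bookkeeping'' you flag conceals an off-by-one discrepancy that is already present in the paper's statement: by \eqref{eq:custom_sob_scale}, a curve $c\in\on{Imm}^{k+1,k+2}(S^1,\R^2)$ has $R(c)\in H^{k+1}\x H^{k+1}$, so the domain that actually matches the target $H^k(S^1,\R_{>0}\x\R)$ is $\on{Imm}^{k,k+1}(S^1,\R^2)/\on{Mot}$; with that reading your reconstruction argument ($q_1^2\in H^k$, $\al(q)\in H^{k+1}$, hence $c'\in H^k$) does make $R$ a bijection onto the constraint set, as you claim.
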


\begin{proof}
The image is given by
\[
\on{im}(R)=\{q\in H^k(S^1,\R_{>0}\times \R): H_{\on{cl}}(q)=0 \}\,,
\]
with the functional $H_{\on{cl}}$ given by
\[
H_{\on{cl}}(q)=\int_0^{2\pi} q_1^2 \exp\left( i\int_0^\th q_1(\si)^{-2}q_2(\si) \ud \si \right)\ud \th\,.
\]
The gradient of $H_{\on{cl}}$ was computed in Sect \ref{Sec:metric2:closed}. Since 0 is a regular 
point of $H_{\on{cl}}$, the statement of the lemma follows from the implicit function theorem in 
Banach spaces.  
\end{proof}

Since the image is defined by a finite number of constraints, the projection to the orthogonal 
complement can be written explicitely. 

\begin{lemma}
\label{lem:m2_proj_hk}
Let $k\geq 2$. The orthogonal projection $\on{Proj}^{\on{im}}$ to $T\on{im}(R)$ can be extended to 
a smooth map 
\[
\on{Proj}^{\on{im}} : TH^k(S^1,\R_{>0}\x\R) \to TH^k(S^1,\R_{>0}\x\R)\,.
\]
\end{lemma}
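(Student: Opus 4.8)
The plan is to exploit the feature that distinguishes this case from the third metric: here $\on{im}(R)$ has \emph{finite} codimension~$2$, so the orthogonal projection admits a closed-form expression and no elliptic solution operator is needed. The starting point is the remark following Lem.~\ref{lem:m2_imcl_submf}, which identifies $(T_q\on{im}(R))^\perp$ (with respect to $G^{L^2}$) as the span of the two vectors $N_i(q) := \on{grad}^{L^2} H_{\on{cl}}^i(q)$, $i=1,2$, whose explicit formulas appear in \eqref{eq:m2_grad_hcl}. The projection onto $T\on{im}(R)$ is then \eqref{eq:m1_orth_proj}, and the same formula makes sense verbatim for every $q\in H^k(S^1,\R_{>0}\x\R)$ (not only for $q\in\on{im}(R)$), so the task reduces to checking that this formula defines a smooth map on $TH^k(S^1,\R_{>0}\x\R)$.

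First I would verify that each $q\mapsto N_i(q)$ extends to a smooth map $H^k(S^1,\R_{>0}\x\R)\to H^k(S^1,\R^2)$ for $k\geq 2$. The expressions in \eqref{eq:m2_grad_hcl} are assembled from operations that are each smooth in the $H^k$-topology: pointwise products, since $H^k(S^1)$ is a Banach algebra for $k\geq 1$; composition with the smooth functions $\cos,\sin$ and with $t\mapsto t^{-1}$ on $\R_{>0}$ (the $\Omega$-lemma, using that $q_1>0$ and $H^k\hookrightarrow C^0$); the turning angle $q\mapsto\al(q)=\int_0^\th q_2 q_1^{-2}\,\ud\si$, which even gains a derivative and hence lands in $H^{k+1}$; and the indefinite integral $\int_\th^{2\pi}$, a bounded linear operator. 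Composing these shows $N_1,N_2\in H^k$ with smooth dependence on $q$.

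Next I would assemble the projection using the inverse Gram matrix rather than Gram--Schmidt. Setting $G_{ij}(q)=G_q^{L^2}(N_i(q),N_j(q))$, the projection reads
\[
\on{Proj}^{\on{im}}(q).h = h - \sum_{i,j=1}^{2} \big(G(q)^{-1}\big)_{ij}\, G_q^{L^2}\big(h, N_j(q)\big)\, N_i(q)\,.
\]
The entries $G_{ij}(q)$ are smooth in $q$, because the metric $g$ of \eqref{l2_met_r3} is a rational, hence smooth, function of $q_1>0$ and integration over $S^1$ is bounded linear. The Gram matrix is positive definite: linear independence of $N_1(q),N_2(q)$ is exactly the regularity of $0$ as a value of $H_{\on{cl}}$ established in Lem.~\ref{lem:m2_sob_im_r}. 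Thus $q\mapsto G(q)^{-1}$ is smooth by smoothness of matrix inversion on $GL(2,\R)$, and since $(q,h)\mapsto G_q^{L^2}(h,N_j(q))$ is smooth, the displayed formula exhibits $\on{Proj}^{\on{im}}$ as a composition of smooth maps. Its restriction to $\on{im}(R)$ then coincides with the $G^{L^2}$-orthogonal projection required by Thm.~\ref{thm:ebin1970_proj}.

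The only genuinely delicate point is the Sobolev bookkeeping in the second step: one must track the order of each factor and confirm that no product leaves $H^k$. Given $k\geq 2$ and the Banach-algebra property this is routine, so I anticipate no real obstacle; the finite codimension is precisely what makes this lemma substantially easier than its counterpart Lem.~\ref{lem:m3_proj_hk} for the third metric, where smoothness of the projection hinges instead on elliptic regularity (Lem.~\ref{lem:pde_regular}).
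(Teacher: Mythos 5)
Your proposal is correct and follows essentially the same route as the paper's proof: both exploit the codimension-2 structure, check that the $G^{L^2}$-gradients of $H_{\on{cl}}$ given in \eqref{eq:m2_grad_hcl} lie in $H^k(S^1,\R^2)$ and depend smoothly on $q$, and then write $\on{Proj}^{\on{im}}$ as the identity minus a finite-rank correction along their span. Your use of the inverse Gram matrix in place of the paper's Gram--Schmidt orthonormalization is an equivalent formulation of the same projection, and your extra detail on smoothness of the gradients (Banach algebra property, $\Omega$-lemma) merely fills in what the paper dismisses as clear.
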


\begin{proof}
The smoothness of the maps $q \mapsto \on{grad}^{L^2} H_{\on{cl}}^i(q)$ for $i=1,2$ given by 
\eqref{eq:m2_grad_hcl} is clear and we see by inspection that $v^i(q) := \on{grad}^{L^2} 
H_{\on{cl}}^i(q) \in H^k(S^1,\R^2)$. Let $w^1(q), w^2(q)$ be an orthonormal basis of 
$\on{span}\{v^1(q), v^2(q)\}$, constructed, e.g., via Gram-Schmidt. Then the orthogonal projection 
is given by    
\[
\on{Proj}^{\on{im}}(q).h = h - \langle h, w^1(q) \rangle w^1(q) - \langle h, w^2(q) \rangle w^2(q)
\]
and is smooth.
\end{proof}

For the metric \eqref{metric1},
\[
G_c(h,k)=\int_{S^1} \ka^{-3/2}\langle D^2_sh,n\rangle\langle D^2_sk,n\rangle + \langle D_sh,v\rangle\langle D_sk,v\rangle\ud s\,,
\]
the analogues of the above lemmas can be proven in the same way.

\section{Discretization}\label{sec:discretization}

In this section we describe, how one can use the $R$-transform to discretize the geodesic equation of second order metrics. To make the exposition more concise we will restrict ourselves to the third metric, described in Sect. \ref{Sec:metric3}, although the principles are rather general.

We consider the metric \eqref{metric3},
\[
G_c(h,k) = \int_M \langle D_s h, D_s k \rangle  + \langle D_s^2 h, n \rangle \langle D_s^2 k, n \rangle \ud s\,.
\]
The space $\on{Imm}/\on{Tra}$ with the metric $G$ is isometric to
\[
\on{im}(R) = \left\{ (q_1,q_2,q_3) \,:\,
\begin{array}{l}
(1)\, q_2' = q_1^{-2}q_3 \\
(2)\, \int_{S^1} q_1^2 \exp(i q_2) \ud \th=0
\end{array} \right\} \,,
\]
which is a submanifold of $C^\infty(S^1, \R^3_\ast)$ equipped with the $G^{L^2}$-metric. Here $\R^3_\ast = \R_{>0}\x S^1\x \R$ and we equip it with the non-flat Riemannian metric
\[
g = 4\, dq_1\otimes dq_1+ q_1^2\, dq_2\otimes dq_2 + q_1^{-6}\, dq_3\otimes dq_3\,.
\]
Instead of discretizing the space $\on{Imm}/\on{Tra}$ and the geodesic equation thereon, we discretize instead $C^\infty(S^1,\R^3_\ast)$, the metric $G^{L^2}$ and the constraints defining $\on{im}(R)$.

\subsection{Spatial discretization}

We replace the curve $q \in C^\infty(S^1,\R^3_\ast)$ by $N$ uniformly sampled points $q^1,\dots,q^N$ with $q^k = q(2\pi k/N)$. Denote by $\De \th = 2\pi/N$ the spatial resolution. The continuous geodesic equation corresponds to a Hamiltonian system with the Hamiltonian
\[
E_{\on{cont}}(q, p) = \frac 12 \int_{S^1} g\i_{q(\th)}(p(\th), p(\th)) \ud \th\,,
\]
together with the constraint functions $H_{\on{diff}}$, $H_{\on{cl}}$ defined in \eqref{eq:m3_constraints}, that define the image of the $R$-transform. Instead of discretizing the geodesic equations directly, we discretize the Hamiltonian function. The discrete Hamiltonian is
\[
E_{\on{discr}}(q^1,\dots,q^N,p^1,\dots,p^N) = 
\frac 12 \sum_{k=1}^N g\i_{q^k}(p^k, p^k) \De \th\,.
\]
To simplify notation we shall denote the discretized curve $(q^1,\dots,q^N)$ again by $q$ and the same for the momentum. The discrete constraint functions are
\begin{align*}
H^k_{\on{diff}}(q,p) &= \left(q_1^k\right)^{-2} q_3^k - \left( q_2^{k+1} - q_2^k\right) / \De \th \\
H_{\on{cl}}(q,p) &= \sum_{k=1}^N \on{exp}(iq_2^k) (q_1^k)^2 \De \th\,.
\end{align*}
Thus we have replaced an infinite-dimensional system by a $3N$-dimensional Hamiltonian system with $N+2$ constraints. The resulting system has $2N-2$ degrees of freedom. This corresponds to discretizing a plane curve by $N$ points and removing translations, again leading to $2N-2$ degrees of freedom.

The advantage of the $R$-transform is that the Hamiltonian $E_{\on{cont}}$ of the continuous system doesn't contain spatial derivatives, since it is related to an $L^2$-type metric. The spatial derivatives appear in the constraints, in particular in $H_{\on{diff}}$, which enforces that the third component $q_3$ is a derivative of the second component $q_2$. However, even though $q_3$ represents the curvature of the curve and thus a second derivative, the constraint $H_{\on{diff}}$ is written in terms of the first derivative only, which can be discretized using first order differences.

Our discrete equations are now Hamilton's equations of a constrained Hamiltonian system. Denote by $H: \R^{3N} \to \R^{N+2}$ the collected constraint functions and let $\la \in \R^{N+2}$ be a Lagrange multiplier. Then Hamilton's equations are
\begin{align}
\label{eq:findim_hamilton}
\begin{split}
\p_t q &= \p_{p} E_{\on{discr}}(q, p) \\
\p_t p &= -\p_{q} E_{\on{discr}}(q, p) + DH^T(q).\la \\
H(q) &= 0\,.
\end{split}
\end{align}

\subsection{Time discretization}

There is a variety of integrators available for the time-discretization of a constrained Hamiltonian system. For example RATTLE is a second-order symplectic method, that preserves constraints exactly. A time-step of RATTLE is given by the following equations. To simplify notation, we denote in this section by $q^j=q(t_j)$ the system at time $t_j$.
\begin{equation}
\label{eq_rattle}
\begin{split}
p^{j+1/2} &= p^j + \frac {\De t}{2} \left( -\p_q E(q^j, p^{j+1/2}) + DH^T(q^j).\la_1 \right) \\
q^{j+1} &= q^j + \frac {\De t}{2} \left( \p_p E(q^j, p^{j+1/2}) + \p_p E(q^{j+1}, p^{j+1/2})\right) \\
0&= H(q^{j+1})\\
p^{j+1} &= p^{j+1/2} + \frac{\De t}{2} \left( -\p_q E(q^{j+1}, p^{j+1/2}) + DH(q^{j+1})^T.\la_2 \right) \\
0 &= DH(q^{j+1}).\p_p E(q^{j+1},p^{j+1})\,.
\end{split}
\end{equation}
This method was first proposed in \cite{Jay1996}. One first performs a momentum update with half of the timestep using the implicit Euler method and an unknown Lagrange multiplier $\la_1$. This is followed by a full time-step for the position using the implicit midpoint rule. The Lagrange multiplier $\la_1$ is determined by the condition $H(q^{j+1})$, which guarantees that the constraints are exactly satisfied in each time-step. Then we perform another half time-step for the momentum with the explicit Euler and determine the Lagrange multiplier $\la_2$ by requiring the hidden constraint $DH(q^{j+1}).\p_p E(q^{j+1},p^{j+1}) = 0$ to be satisfied. See \cite{Hairer2006, Leimkuhler2004} for more details about symplectic integrators.

\section{Experiments}
In this section we present a series of numerical examples to demonstrate the value of $R$-transforms for numerical computations. The examples were computed as described in Sect.~\ref{sec:discretization}.
In all these examples we will only consider the third metric, i.e.,
$$G_c(h,h)=\int_{S^1}\langle D_sh, D_sh \rangle+\langle D_s^2h, n \rangle^2 ds\;.$$
The curves are discretized with $100$ points and since the metric ignores translations we centered all curves such that their center of mass lies at the origin. 
In Fig. \ref{fig1} we show two examples of solutions to the geodesic boundary value problem.
\begin{figure}[ht]
\includegraphics[width=0.34\textwidth]{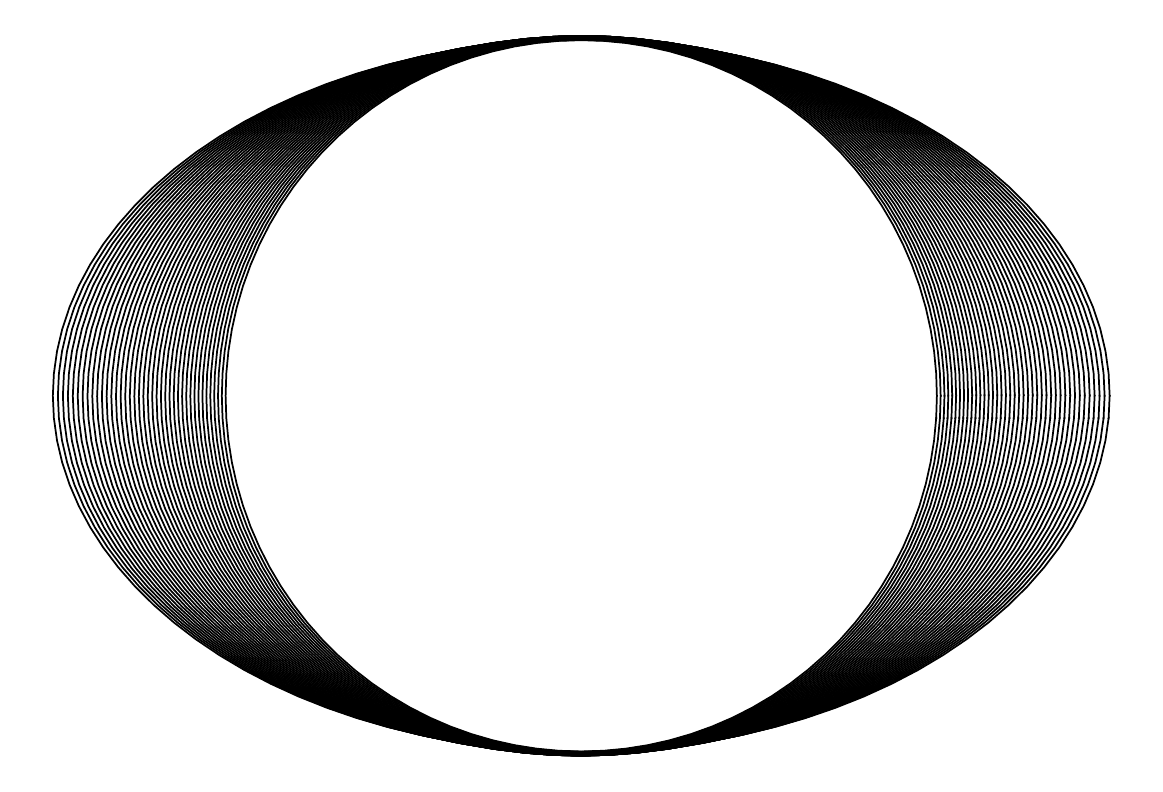}
\includegraphics[width=0.65\textwidth]{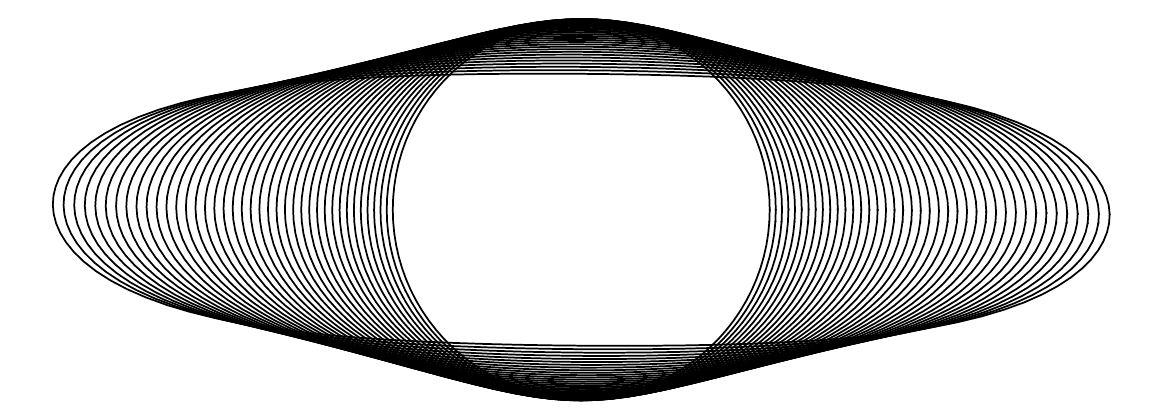}\\
\includegraphics[width=0.34\textwidth]{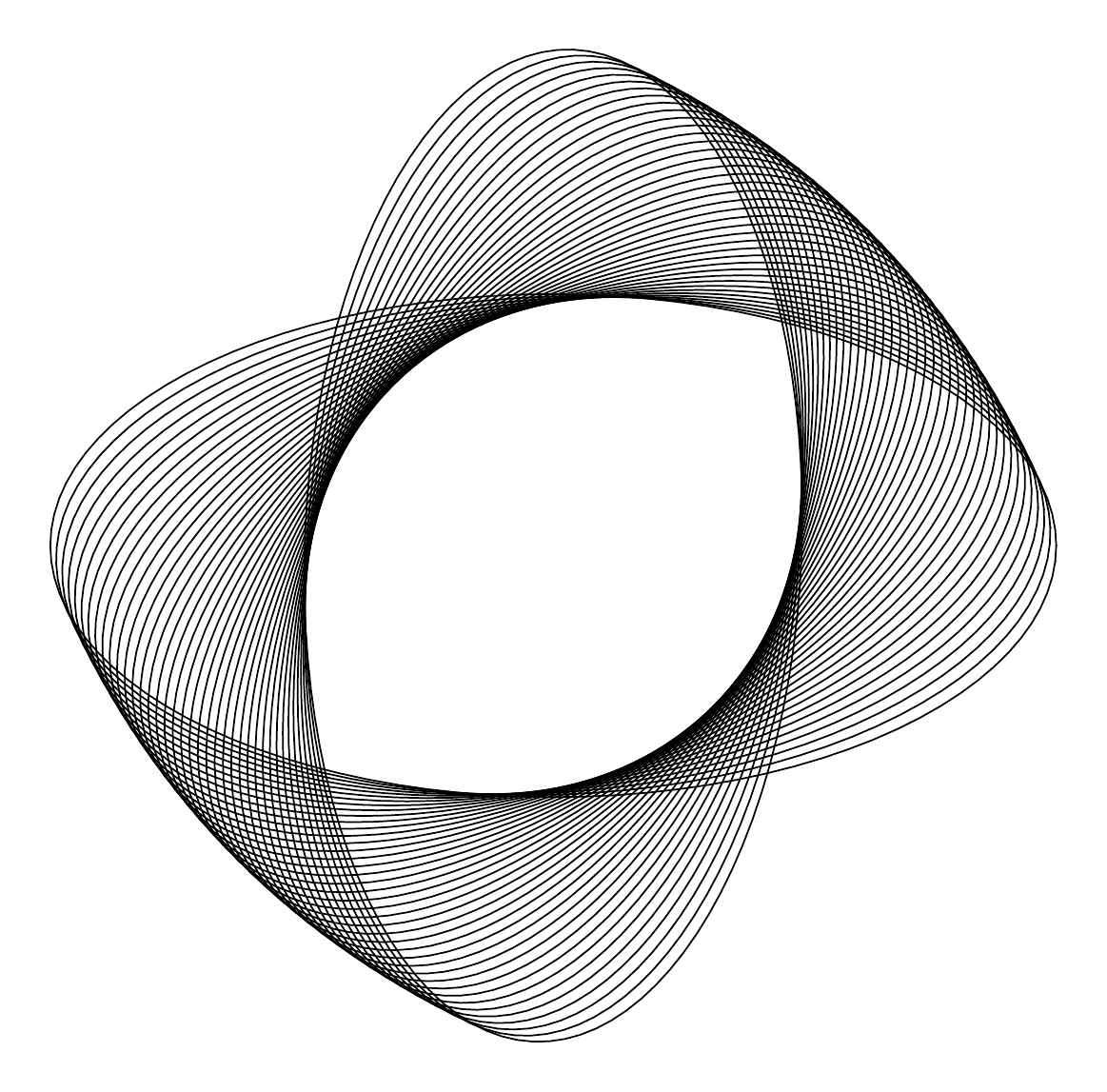}
\includegraphics[width=0.65\textwidth]{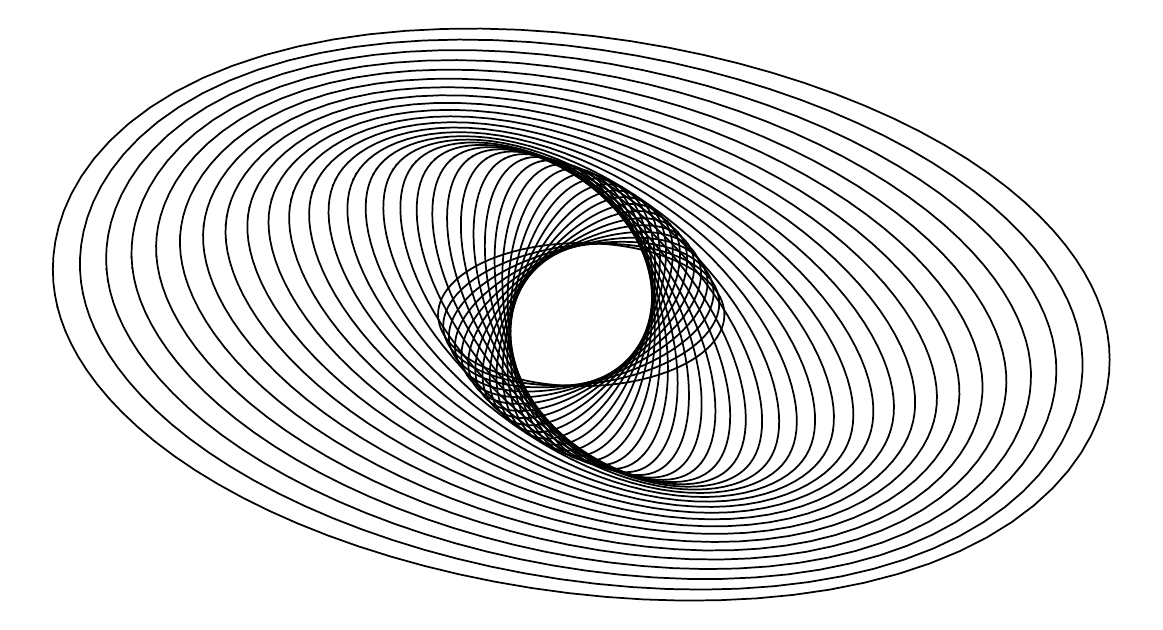}
\caption{First line, left side: Geodesic connecting a circle to an ellipse in time $t=2$. First line, right side: The geodesic continued until time $t=6$.
Second line, left side: Geodesic connecting a ellipse to a rotated ellipse in time $t=2$. Second line, right side: The geodesic continued until time $t=6$.}
\label{fig1}
\end{figure}

The second series of examples (Fig. \ref{fig2}) is concerned with the geodesic initial value problem. It shows two geodesic that starts at the circle with two different initial velocities. 
\begin{figure}[ht]
\includegraphics[width=0.40\textwidth]{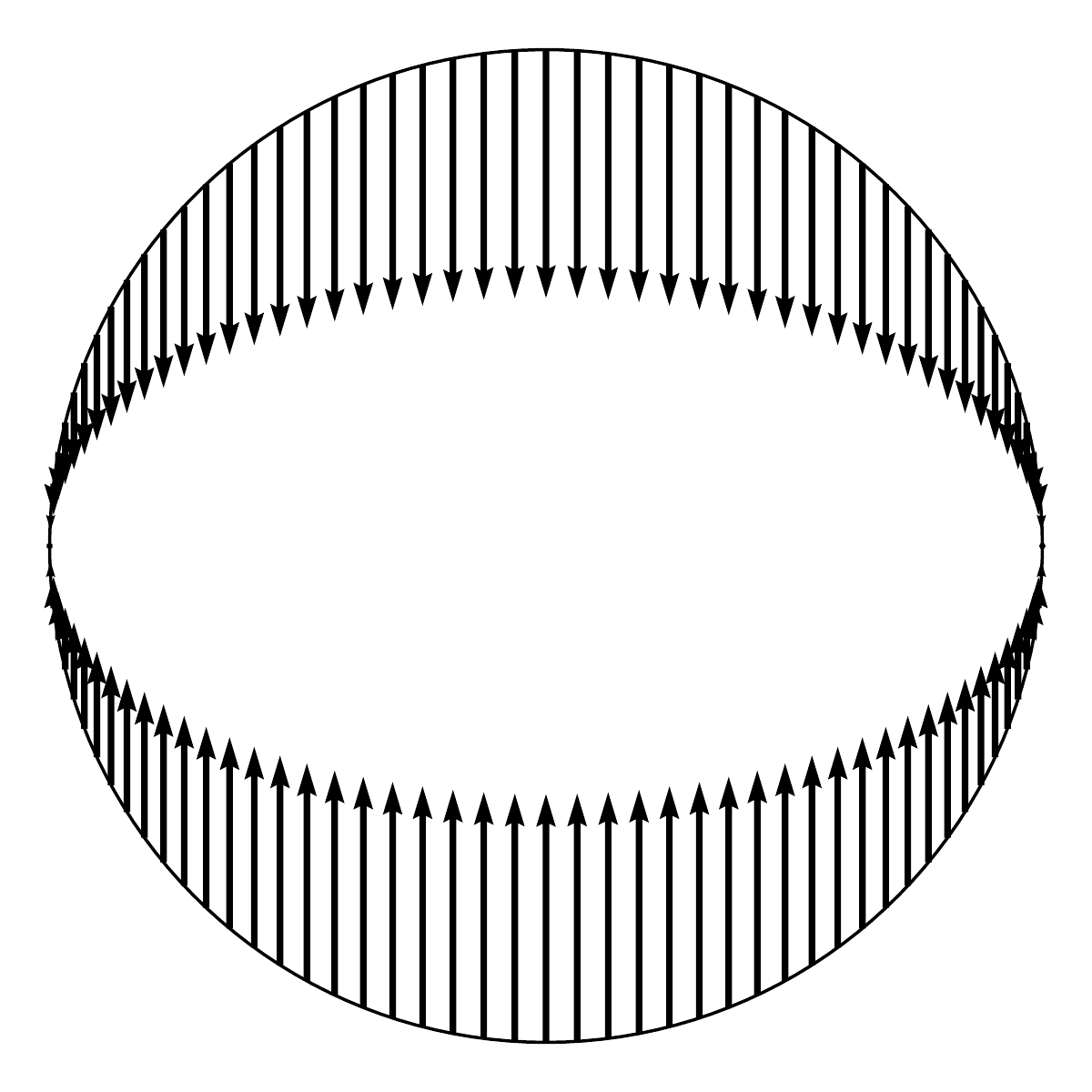}\hspace{.5cm}
\includegraphics[width=0.40\textwidth]{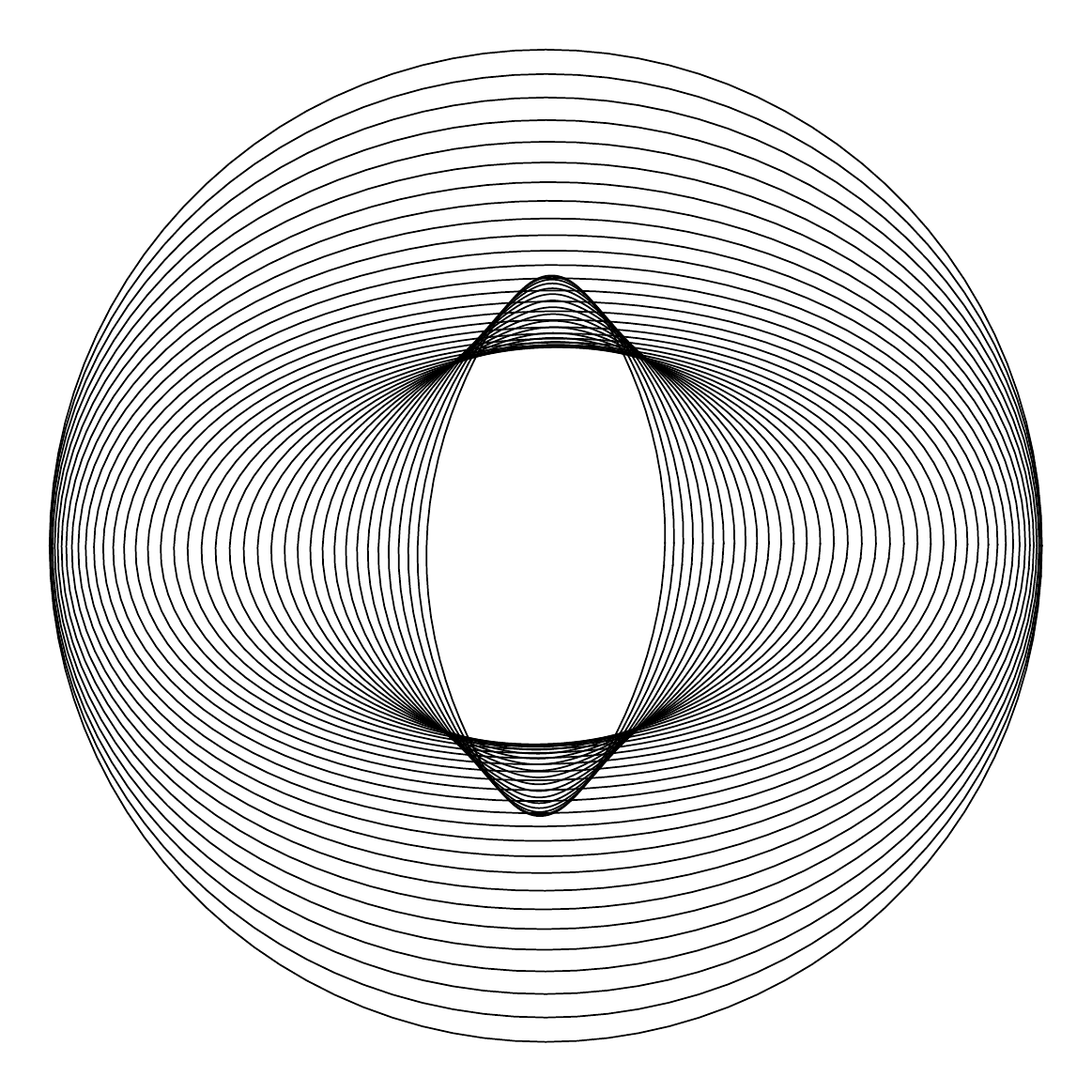}
\includegraphics[width=0.40\textwidth]{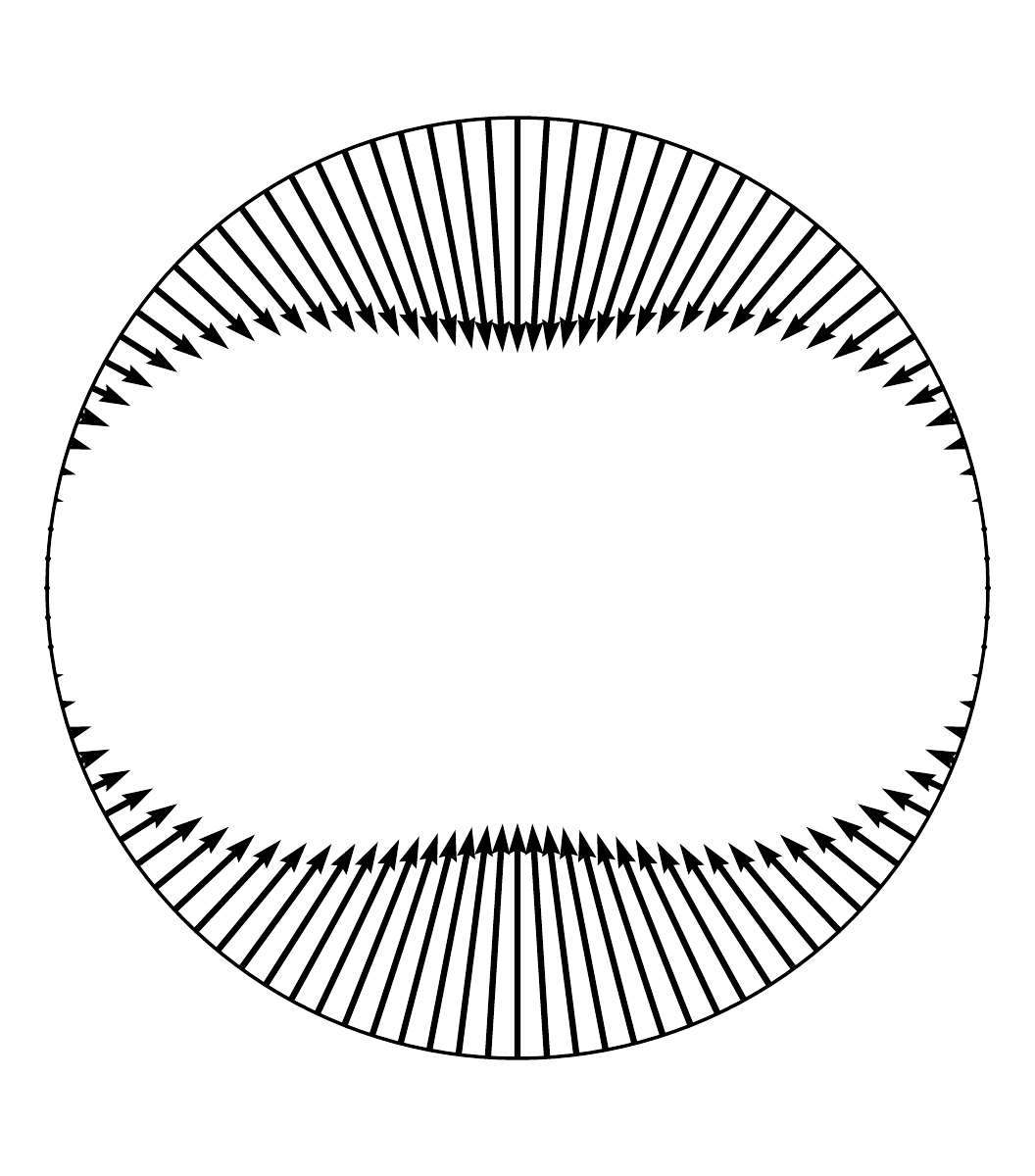}\hspace{.5cm}
\includegraphics[width=0.40\textwidth]{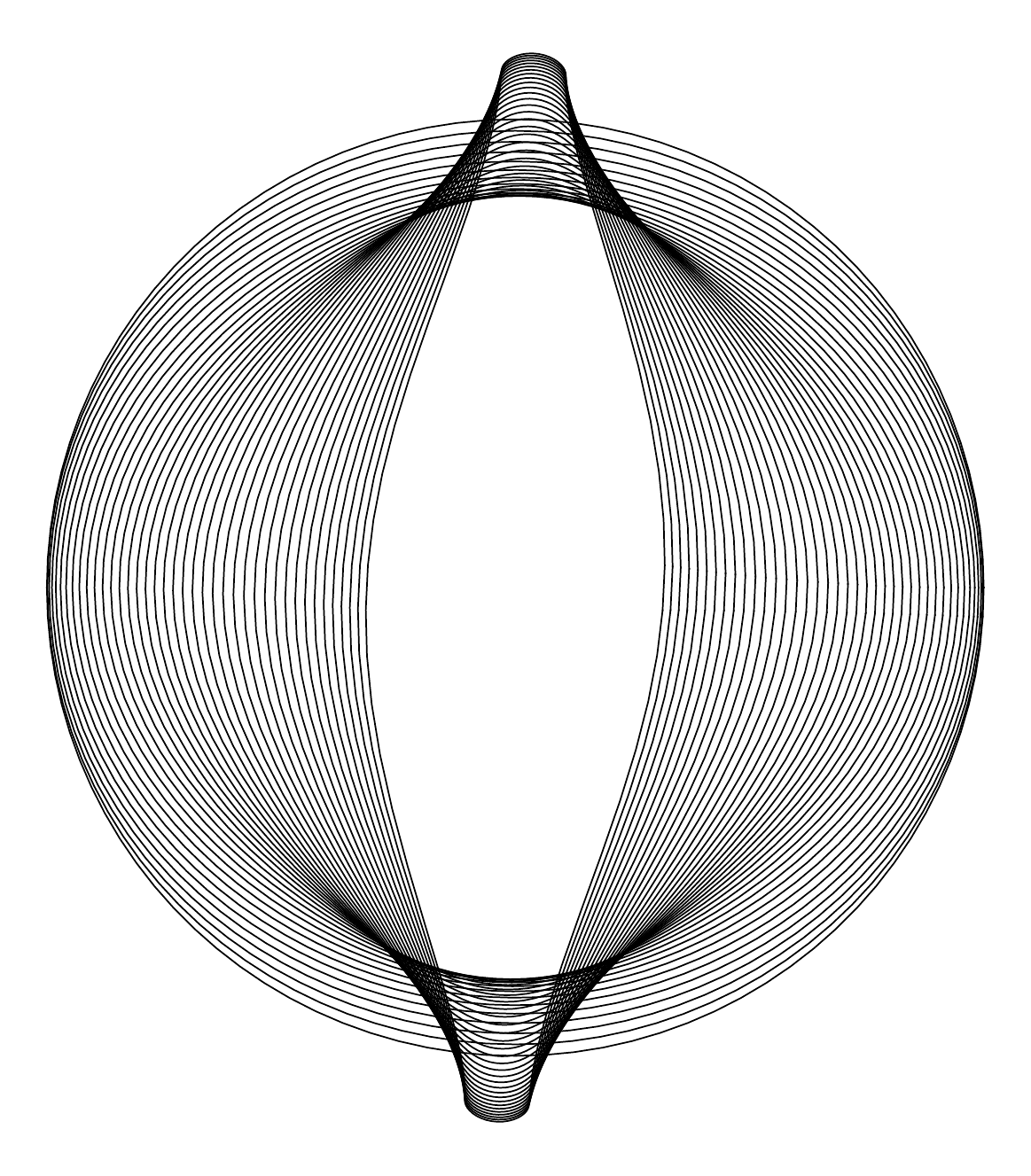}
\caption{Two examples of a geodesic that starts at the circle. On the right hand side the geodesic is printed, whereas on the left hand side we pictured the initial velocity. 
First line: Initial velocity $h=(0,\on{sin}\th)$. The geodesic is computed  until time $2$.
Second line: Initial velocity $h=-\on{sin}^2\th(\on{cos}\th,\on{sin}\th)$. The geodesic is computed  until time $1$. }
\label{fig2}
\end{figure}

The last example shows a geodesic in the shape space of umparametrized curves.
Following the presentation of \cite{Michor2007} we  identify this space with the quotient space $$B_i(S^1,\R^2)=\on{Imm}(S^1,\R^2)/\on{Diff}(S^1).$$
Every reparametrization invariant metric on  $\on{Imm}(S^1,\R^2)$ induces  a metric on the shape space $B_i(S^1,\R^2)$ such that the projection  
$$
\pi: \on{Imm}(S^1,\R^2) \to B_i(S^1,\R^2)
$$
is a Riemannian submersion. In this setting geodesics on shape space $B_i(S^1,\R^2)$ correspond to horizontal geodesics on $\on{Imm}(S^1,\R^2)$, i.e., geodesics on $\on{Imm}(S^1,\R^2)$
with horizontal velocity. By the conservation of reparametrization momentum a geodesic with horizontal inital velocity stays horizontal for all time and thus this condition has to be checked at the initial point of the geodesic only.
For a detailed description of this construction see \cite{Michor2007,Bauer2011b}.

\begin{figure}[ht]\label{geod_shapespace}
\includegraphics[width=0.49\textwidth]{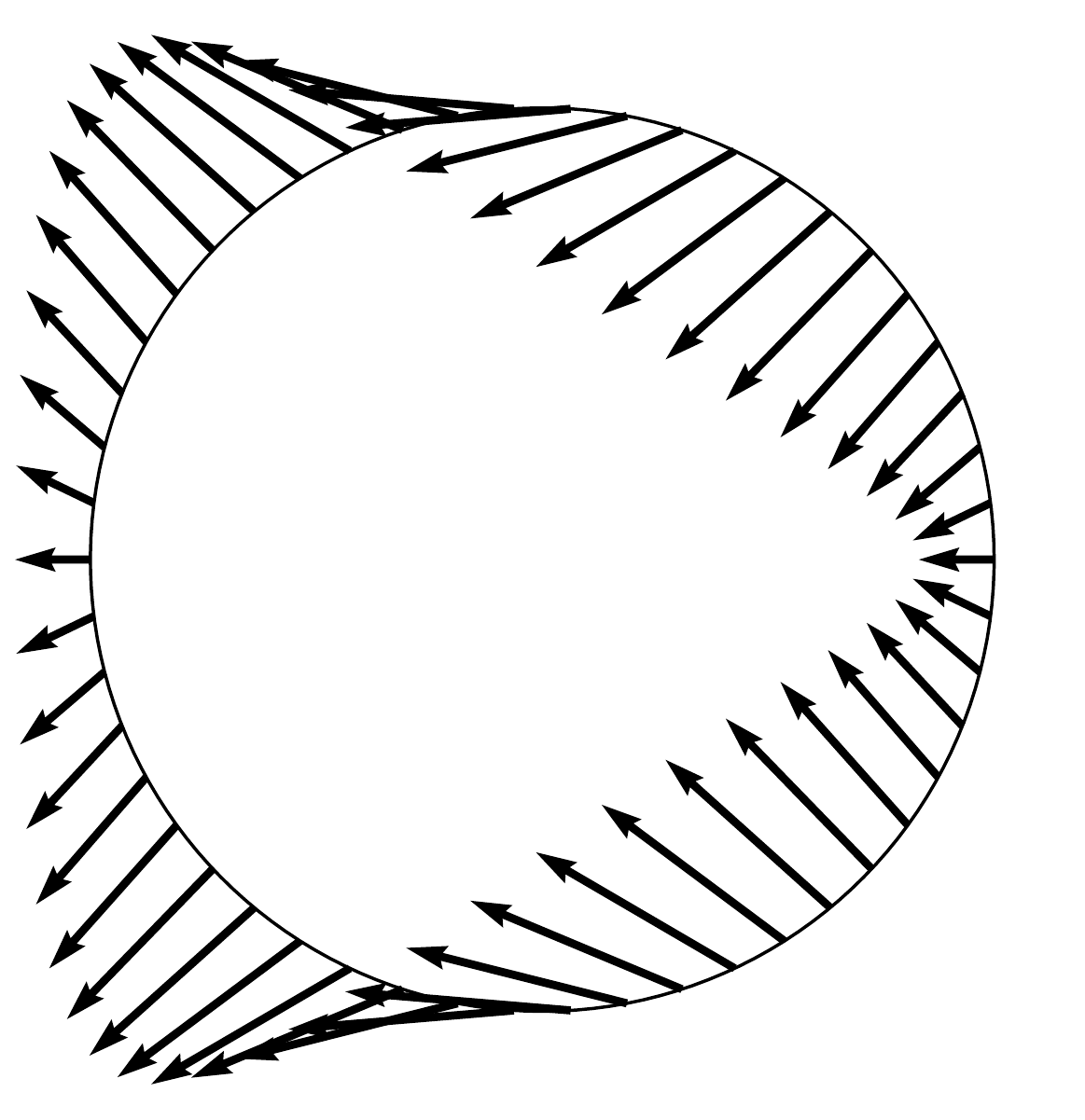}
\includegraphics[width=0.49\textwidth]{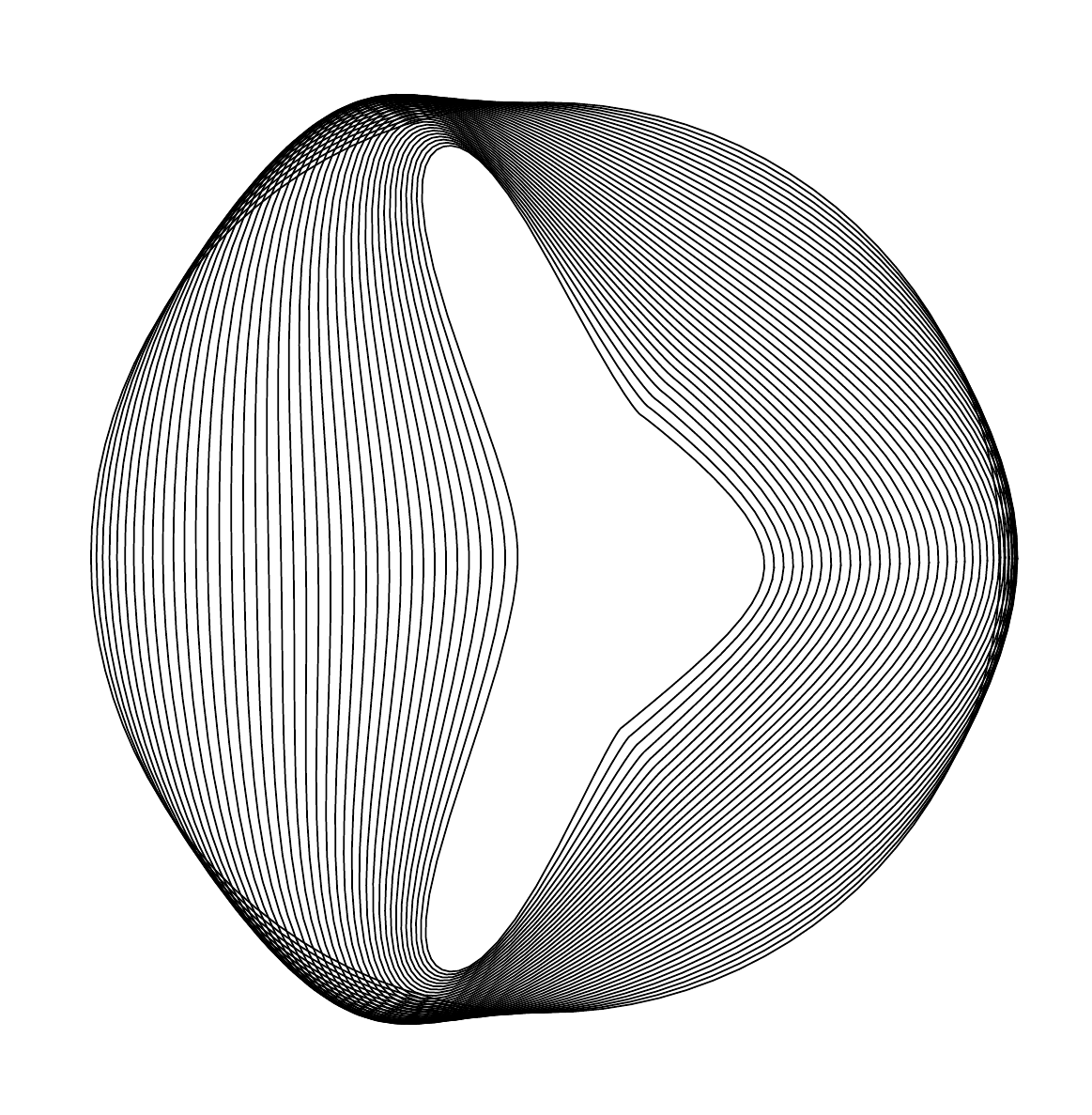}
\caption{A horizontal geodesic on $\on{Imm}(S^1,\R^2)$ with initial velocity $h=-(2-\on{cos}2\th,2\on{sin}2\th)$ until time $.3$.}
\end{figure}
For metrics that are induced by a differential operator field $L$ the horizontality condition can be expressed as
$$h\in \on{Hor}(c)\subset T_c\on{Imm}(S^1,\R^2)\Leftrightarrow L_ch= f.n,\quad f\in C^\infty(S^1)\,,$$
with $n$ denoting the normal field to $c$.
In the following we want to investigate this condition for the third metric,
\[
G_c(h,h)=\int_{S^1}\langle D_sh, D_sh \rangle+\langle D_s^2h, n \rangle^2 ds\,.
\]
This metric is induced by the differential operator
\[
L_c h = D_s^2 \big(\langle D_s^2 h, n \rangle n \big)  -D^2_sh\,.
\]
To simplify the expressions we choose the circle $c(\th)=(\on{cos}\th,\on{sin}\th)$ as the starting point of the geodesic.
Then we have
$$|c'|=1,\quad D_s=\partial_{\theta},\quad \partial_\theta n=v\quad\text{and}\quad\partial_\theta v=-n\,.$$ 
Let $h=an+bv$. Then
\begin{align*}
 D_s^2 h&=\partial^2_{\theta}(an+bv)=\partial_{\theta}(a'n+an'+b'v+bv')\\
&=a''n+2a'n'+an''+b''v+2b'v'+bv''\\&=(a''-a+2b')n+(-2a'+b''-b)v
\end{align*}
Thus we have
\begin{align*}
\partial^2_{\theta} \big(\langle \partial^2_{\theta} h, n \rangle n \big)&=\partial^2_{\theta}\big((a''-a-2b')n\big)\\
&=( a'''' - a'' + 2b''')n + 2(a'''-a'+2b'')n'
+(a''-a+2b')n'' \\
&=-2(a'''-a'+2b'')v+(a''''-2a''+a-2b'+2b''')n
\end{align*}
From this we can read off the tangential and the normal part of $Lh$:
\begin{align*}
 \langle L_c h,v\rangle&= -2a'''+4a'-5b''+b\\
  \langle L_c h,n\rangle&=a''''-3a''+2a+2b'''-4b'
\end{align*}
A horizontal velocity satisfies $\langle L_c h,v\rangle=0$, which leads to the ODE
$$-2a'''+4a'-5b''+b=0\,.$$
A solution to this equation is given for example by 
$$a=3 \on{cos}\th,\quad b=\on{sin}\th\,,$$
This leads the initial velocity
$$h=(-\on{cos}^2 \th-3\on{sin}^2 \th,-4\on{cos}\th \on{sin}\th )=-(2-\on{cos}2\th,2\on{sin}2\th)\,.$$

\appendix
\counterwithin{theorem}{section}

\section{Translation invariant sprays}
\label{Sec:trans_inv}

Let $N$ be a finite dimensional manifold. Consider a spray $\Xi$ on $C^\infty(S^1, N)$ that has 
smooth extensions to $H^k(S^1,N)$ for $k\geq k_0$. For each initial condition $(q_0, v_0) 
\in TH^k(S^1,N)$ and each Sobolev order $k$ denote by $J_k(q_0, v_0)$ the maximal domain of 
existence of its flow. If $(q_0, v_0) \in TH^{k+1}(S^1,N)$ then a-priori we only have the inclusion 
\[
J_{k+1}(q_0, v_0) \subseteq J_k(q_0, v_0)\,.
\]
To obtain that smooth initial conditions have smooth solutions need to know that the length of the 
maximal existence interval is bounded from below, i.e., $\cap_{k\geq k_0} J_k(q_0, v_0) = \{0\}$ 
cannot happen. If the spray is invariant under translations, this is ruled out by the following 
result.   

\begin{theorem}[Ebin-Marsden, 1970]
\label{thm:max_domain}
Let the spray $\Xi$ be invariant under translations, i.e.
\[
\Xi(T(\si)q, T(\si)v) = T(\si)\Xi(q,v)\,,
\]
with $T(\si)q(\th) = q(\th + \si)$ being the translation group. Then for initial conditions $(q_0, 
v_0) \in TH^{k+1}(S^1,N)$ we have 
\[
J_{k+1}(q_0, v_0) = J_{k}(q_0, v_0)\,.
\]
\end{theorem}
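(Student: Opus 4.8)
The plan is to prove the nontrivial inclusion $J_k(q_0,v_0) \subseteq J_{k+1}(q_0,v_0)$; the reverse inclusion $J_{k+1}(q_0,v_0)\subseteq J_k(q_0,v_0)$ is automatic, since any integral curve of $\Xi$ in $H^{k+1}$ is in particular an integral curve in $H^k$, and integral curves of a spray are unique. The engine is the classical Ebin--Marsden ``no loss, no gain'' bootstrap: translation invariance lets us trade one $\th$-derivative of the initial data for one degree of Sobolev regularity of the flow, without shrinking the interval of existence. First I would record the flow-level consequence of translation invariance. Since $\Xi$ is a smooth spray on $H^k(S^1,N)$ it has a flow $\on{Fl}_t$ that is jointly smooth in $t$ and the initial conditions on an open domain, and the infinitesimal identity $\Xi(T(\si)q,T(\si)v)=T(\si)\Xi(q,v)$ integrates to
\[
\on{Fl}_t \o T(\si) = T(\si) \o \on{Fl}_t
\]
wherever both sides are defined. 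In particular the maximal domain is translation invariant, $J_k(T(\si)q_0,T(\si)v_0)=J_k(q_0,v_0)$.

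The key input is the regularity of the translation orbit map. For $w\in H^k(S^1,N)$ the map $\si\mapsto T(\si)w$ is continuous into $H^k$, and it is differentiable at $\si=0$ into $H^k$ \emph{precisely} when $w\in H^{k+1}$, with derivative $\p_\th w\in H^k$. Thus, given $(q_0,v_0)\in TH^{k+1}(S^1,N)\subseteq TH^k(S^1,N)$, the curve $\si\mapsto (T(\si)q_0,T(\si)v_0)$ is a $C^1$-curve into $TH^k$ with velocity $(\p_\th q_0,\p_\th v_0)$ at $\si=0$. For each fixed $t\in J_k(q_0,v_0)$ I would compose this $C^1$-curve with the smooth flow $\on{Fl}_t$ (legitimate by the translation invariance of the domain) and apply the chain rule to see that
\[
\si \mapsto \on{Fl}_t(T(\si)q_0,T(\si)v_0) = T(\si)\,\on{Fl}_t(q_0,v_0)
\]
is a $C^1$-curve into $TH^k$. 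Reading the characterization of the orbit map in the opposite direction, the existence of the $\si$-derivative of $T(\si)\on{Fl}_t(q_0,v_0)$ at $\si=0$ in $TH^k$ forces $\on{Fl}_t(q_0,v_0)\in TH^{k+1}$. Hence the $H^k$-solution remains in $TH^{k+1}$ for every $t\in J_k(q_0,v_0)$.

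It remains to upgrade this pointwise statement to equality of existence intervals. Writing $(q(t),v(t)):=\on{Fl}_t(q_0,v_0)$, I would invoke the standard continuation (escape) lemma for the smooth $H^{k+1}$-flow: if one had $\sup J_{k+1}(q_0,v_0) < \sup J_k(q_0,v_0)$, then $(q(t),v(t))$ would have to leave every compact subset of $TH^{k+1}$ as $t\to \sup J_{k+1}(q_0,v_0)$, contradicting that this endpoint lies in the open interior of $J_k(q_0,v_0)$, where the curve has just been shown to stay in $TH^{k+1}$. The identical argument at the lower endpoint then yields $J_{k+1}(q_0,v_0)=J_k(q_0,v_0)$.

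The step I expect to be the main obstacle is making the differentiability bookkeeping for the translation representation rigorous in \emph{both} directions: one must verify that $\si\mapsto T(\si)w$ is $C^1$ into $H^k$ if and only if $w\in H^{k+1}$, and that differentiability of the composite curve $\si\mapsto T(\si)\on{Fl}_t(q_0,v_0)$ into $TH^k$ at $\si=0$ genuinely certifies $H^{k+1}$-regularity of the flow value, rather than merely the existence of a one-sided or weak derivative. This is exactly where the non-smoothness of the translation action on Sobolev spaces must be handled with care, in combination with the joint smoothness of $\on{Fl}$ and the openness and translation invariance of its domain.
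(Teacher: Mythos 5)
Your core argument is the paper's own: the paper differentiates the translation-invariance identity of the exponential map, $\exp(T(\si)q_0,T(\si)tv_0)=T(\si)\exp(q_0,tv_0)$, at $\si=0$, obtaining $T\exp.(q_0',tv_0')=\p_\th\exp(q_0,tv_0)$, whose left-hand side lies in $H^k$ because $(q_0,v_0)\in TH^{k+1}$ and the tangent map of the smooth exponential map preserves $TH^k$; you do exactly the same with the flow in place of the exponential map, and you additionally make explicit the two ingredients the paper leaves implicit, namely the characterization ``$\si\mapsto T(\si)w$ is differentiable into $H^k$ at $\si=0$ if and only if $w\in H^{k+1}$'' and the commutation of the flow with translations (including translation invariance of the maximal domain). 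Up to and including the pointwise statement that $\on{Fl}_t(q_0,v_0)\in TH^{k+1}$ for every $t\in J_k(q_0,v_0)$, your proof is correct and is a more careful rendering of the paper's two-line computation.

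The gap is in your last step. Pointwise membership of the curve in $TH^{k+1}$ does not contradict ``leaves every compact subset of $TH^{k+1}$'': compact subsets of an infinite-dimensional Hilbert manifold have empty interior, and a curve can take all its values in $TH^{k+1}$, stay bounded in $TH^k$, and still have its $H^{k+1}$-norm blow up as $t\to\sup J_{k+1}$, hence leave every compact set. What the escape lemma requires is that the image of $[0,t^*]$, with $t^*=\sup J_{k+1}<\sup J_k$, be \emph{compact} in $TH^{k+1}$, i.e., that the curve be \emph{continuous} into $TH^{k+1}$ --- and this you have not shown. Fortunately your own chain-rule identity repairs it: it gives $\p_\th\on{Fl}_t(q_0,v_0)=T\on{Fl}_t(q_0,v_0).(\p_\th q_0,\p_\th v_0)$, whose right-hand side is continuous (indeed smooth) in $t$ with values in $TH^k$ since the flow is smooth; combined with continuity of $t\mapsto\on{Fl}_t(q_0,v_0)$ into $TH^k$ and the norm equivalence $\|w\|_{H^{k+1}}\simeq\|w\|_{H^k}+\|\p_\th w\|_{H^k}$ (for $N=\R^d$; in general argue in charts), this yields continuity into $TH^{k+1}$, after which your compactness argument closes. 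Alternatively, avoid compactness altogether: since $t^*\in J_k$, your pointwise statement places $\on{Fl}_{t^*}(q_0,v_0)$ in $TH^{k+1}$; restart the (smooth, hence locally well-posed) $H^{k+1}$-flow at this point and use uniqueness of $H^k$-integral curves to glue it to the $H^{k+1}$-solution on $[0,t^*)$, contradicting maximality of $J_{k+1}$. Note that the paper itself is silent on this final step (it simply concludes ``Thus $J_k=J_{k+1}$''), so you were right to address it; as written, however, your contradiction does not follow and needs one of these two repairs.
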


\begin{proof}
This result is implicit in \cite[Thm.~12.1]{Ebin1970} and made explicit in \cite[Lem.~
5.1]{Escher2012pre}. We prove it here only for $N=\R^d$, the general case requiring only slightly 
more cumbersome notation.  

If the spray is translation-invariant, then so is the exponential map,
\[
\exp(T(\si)q_0, T(\si)tv_0) = T(\si)\exp(q_0, tv_0)\,.
\]
Differentiating at $\si=0$ gives
\[
T_{(q_0, tv_0)} \exp.(q_0', tv_0') = \p_\th \exp(q_0, tv_0)\,.
\]
From our assumption $q_0, v_0 \in H^{k+1}(S^1,\R^d)$ it follows that the left-hand side is an 
element of $H^k(S^1,\R^d)$. Since $q(t) = \exp(q_0, tv_0)$ we see that $q'(t) \in H^k(S^1,\R^d)$ 
which implies $q(t) \in H^{k+1}(S^1,\R^d)$. Thus $J_k(q_0, v_0) = J_{k+1}(q_0, v_0)$.  
\end{proof}

\section{Regularity for elliptic equations}

\begin{lemma}
\label{lem:pde_regular}
Let $k\geq 2$ and let $L$ be the operator 
\[
Lu = -(au')' + bu
\] 
with $a \in H^{k-1}(S^1)$, $b \in H^{k-2}(S^1)$ and $a>0$, $b\geq\ep>0$ for $\ep \in \R$. The $L$ 
is a bibounded, invertible operator 
\[
L: H^k(S^1) \to H^{k-2}(S^1)\,.
\]
Furthermore the map $L\i: a, b, f \mapsto L_{a,b}\i f$ is a smooth map
\[
L\i : H^{k-1}(S^1) \x H^{k-2}(S^1) \x H^{k-2}(S^1) \to H^s(S^1)\,.
\]
\end{lemma}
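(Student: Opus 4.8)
The plan is to establish the three assertions in order: boundedness of $L=L_{a,b}$, its bijectivity with bounded inverse, and finally the smooth dependence of the solution operator on $(a,b,f)$ (the target space of the last map should read $H^k(S^1)$). Boundedness is a direct consequence of the multiplicative structure of Sobolev spaces on the compact manifold $S^1$. Writing $Lu = -a u'' - a'u' + bu$, I would note that for $k\geq 2$ one has $k-1>\tfrac12$, so $H^{k-1}(S^1)$ is a Banach algebra which acts by multiplication on $H^{k-2}(S^1)$; hence $au''\in H^{k-2}$, $a'u'\in H^{k-2}$ and (using $H^{k-2}\cdot H^k\hookrightarrow H^{k-2}$) $bu\in H^{k-2}$, with norm bounds linear in $a$ and $b$. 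This shows $L\colon H^k\to H^{k-2}$ is bounded and, crucially, that $(a,b)\mapsto L_{a,b}$ is a \emph{bounded linear} map into $\mathcal L(H^k(S^1),H^{k-2}(S^1))$.

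For invertibility I would first solve the problem at the energy level and then bootstrap. The bilinear form $B(u,v)=\int_{S^1} a\,u'v' + b\,uv\,\ud\th$ is bounded on $H^1\x H^1$ and, since $a\geq a_0>0$ and $b\geq\ep>0$, coercive: $B(u,u)\geq\min(a_0,\ep)\|u\|_{H^1}^2$. By Lax--Milgram, $L\colon H^1(S^1)\to H^{-1}(S^1)$ is an isomorphism; in particular, for $f\in H^{k-2}\subseteq H^{-1}$ there is a unique weak solution $u\in H^1$. The heart of the argument is the regularity bootstrap. Rewriting $(au')'=bu-f$ and setting $w=au'$, I would show by induction on $j$ (from $j=1$ to $k-1$) that $u\in H^j$ implies $u\in H^{j+1}$: the Sobolev multiplication $H^{k-2}\cdot H^j\hookrightarrow H^{j-1}$ is valid for $1\leq j\leq k-1$ because $(k-2)+j-(j-1)=k-1>\tfrac12$, so $bu-f\in H^{j-1}$, whence $w'\in H^{j-1}$, $w=au'\in H^j$, and $u'=a\i w\in H^j$ (using $a\i\in H^{k-1}$, which follows from $a\in H^{k-1}$, $a\geq a_0$ and the composition rule for Sobolev functions). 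This terminates at $u\in H^k$, giving surjectivity; injectivity is inherited from the $H^1$ level. Since $L$ is then a continuous bijection between Banach spaces, the open mapping theorem yields that $L\i$ is bounded, i.e.\ $L$ is bibounded.

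For the final, smoothness, statement I would argue abstractly rather than by further PDE estimates. By the first paragraph, $\Psi\colon(a,b)\mapsto L_{a,b}$ is a bounded linear, hence $C^\infty$, map $H^{k-1}\x H^{k-2}\to\mathcal L(H^k,H^{k-2})$. The set of invertible operators is open in $\mathcal L(H^k,H^{k-2})$ and the inversion map $T\mapsto T\i$ is smooth (indeed real-analytic) on it. By the second paragraph every $(a,b)$ satisfying the hypotheses lies in the open preimage $\Psi\i(\mathrm{GL})$, and on a neighbourhood of such a point $(a,b)\mapsto L_{a,b}\i\in\mathcal L(H^{k-2},H^k)$ is the composition of $\Psi$ with operator inversion, hence smooth. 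Composing with the bounded bilinear evaluation $\mathcal L(H^{k-2},H^k)\x H^{k-2}\to H^k$, $(T,f)\mapsto Tf$, shows that $(a,b,f)\mapsto L_{a,b}\i f$ is smooth into $H^k(S^1)$.

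The main obstacle is the regularity bootstrap: one must check at each of the finitely many steps that the low regularity of the coefficients ($a$ only in $H^{k-1}$, $b$ only in $H^{k-2}$) still permits the product $bu-f$ to lie in the Sobolev space needed to gain one derivative, and that $a\i$ retains full $H^{k-1}$ regularity. The bookkeeping of Sobolev indices in the multiplication estimates $H^{s}\cdot H^{t}\hookrightarrow H^{r}$ (valid for $0\leq r\leq\min(s,t)$ with $s+t-r>\tfrac12$ on $S^1$) is where care is required; everything else is soft functional analysis.
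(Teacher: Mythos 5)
Your proof is correct, including the observation that the target space $H^s$ in the statement should read $H^k$. It is, however, much more than what the paper records: the paper's entire proof is a citation to Evans' book on second order elliptic PDEs, together with the assertion that those arguments ``can be followed line by line, even though we have required less regularity for the coefficient functions.'' Your skeleton (Lax--Milgram at the $H^1$ level, then a regularity lift to $H^k$, then soft functional analysis for parameter dependence) is the same standard scheme that citation points to, but you deviate in exactly the place where the citation is least convincing: instead of Evans' difference-quotient regularity theory, you exploit the one-dimensional divergence structure, setting $w=au'$ so that $w'=bu-f$, and run a finite induction on Sobolev multiplication $H^{s}\cdot H^{t}\hookrightarrow H^{r}$ (with $r\leq\min(s,t)$, $s+t-r>\tfrac12$) together with $a\i\in H^{k-1}$. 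This is precisely the bookkeeping needed to justify the paper's claim that the rough coefficients $a\in H^{k-1}$, $b\in H^{k-2}$ cause no harm, and your version makes it explicit rather than asserted. You also supply the smoothness of $(a,b,f)\mapsto L_{a,b}\i f$ by the clean abstract route --- $(a,b)\mapsto L_{a,b}$ is bounded linear into $\mathcal{L}(H^k,H^{k-2})$, the group of invertible operators is open, and inversion is real-analytic --- a point the paper's proof does not address at all, even though it is the property actually used in Lemma \ref{lem:m3_proj_hk}. In short: same architecture as the cited standard theory, but your regularity bootstrap and inversion argument fill genuine gaps that the paper delegates to the reader.
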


\begin{proof}
This lemma can be proven in the same way as existence and regularity results are proven for second order elliptic PDEs in, e.g., \cite[Chap. 6]{Evans2010}. The proofs can be followed line by line, even though we have required less regularity for the coefficient functions.
\end{proof}


\begin{thebibliography}{10}

\bibitem{Bao1993}
D.~Bao, J.~Lafontaine, and T.~Ratiu.
\newblock {On a nonlinear equation related to the geometry of the
  diffeomorphism group}.
\newblock {\em Pacific J. Math.}, 158(2):223--242, 1993.

\bibitem{Bauer2011a}
M.~Bauer and M.~Bruveris.
\newblock {A new Riemannian setting for surface registration}.
\newblock In {\em {3nd MICCAI Workshop on Mathematical Foundations of
  Computational Anatomy}}, pages 182--194, 2011.

\bibitem{Bauer2012c}
M.~Bauer, M.~Bruveris, P.~Harms, and P.~W. Michor.
\newblock {Vanishing geodesic distance for the {R}iemannian metric with
  geodesic equation the {K}d{V}-equation}.
\newblock {\em Ann. Global Anal. Geom.}, 41(4):461--472, 2012.

\bibitem{Bauer2012b_preprint}
M.~Bauer, M.~Bruveris, S.~Marsland, and P.~W. Michor.
\newblock {Constructing reparametrization invariant metrics on spaces of plane
  curves}.
\newblock {\em arXiv:1207.5965}, 2012.

\bibitem{Bauer2012d_preprint}
M.~Bauer, M.~Bruveris, and P.~W. Michor.
\newblock {The homogeneous {S}obolev metric of order one on diffeomorphism
  groups on the real line}.
\newblock {\em \emph{arXiv:1209.2836}}, 2012.

\bibitem{Bauer2013b_preprint}
M.~Bauer, M.~Bruveris, and P.~W. Michor.
\newblock {Overview of the {G}eometries of {S}hape {S}paces and
  {D}iffeomorphism groups}.
\newblock {\em \emph{To appear in} {J}ournal of Mathematical Imaging and
  Vision}, 2013.

\bibitem{Bauer2011b}
M.~Bauer, P.~Harms, and P.~W. Michor.
\newblock {Sobolev metrics on shape space of surfaces}.
\newblock {\em J. Geom. Mech.}, 3(4):389--438, 2011.

\bibitem{Bauer2012a}
M.~Bauer, P.~Harms, and P.~W. Michor.
\newblock {Almost local metrics on shape space of hypersurfaces in
  {$n$}-space}.
\newblock {\em SIAM J. Imaging Sci.}, 5(1):244--310, 2012.

\bibitem{Bauer2012b}
M.~Bauer, P.~Harms, and P.~W. Michor.
\newblock {Curvature weighted metrics on shape space of hypersurfaces in
  {$n$}-space}.
\newblock {\em Differential Geom. Appl.}, 30(1):33--41, 2012.

\bibitem{Bauer2012d}
M.~Bauer, P.~Harms, and P.~W. Michor.
\newblock {Sobolev Metrics on Shape Space, II: Weighted Sobolev Metrics and
  Almost Local Metrics}.
\newblock {\em J. Geom. Mech.}, 4(4):365--383, 2012.

\bibitem{Charpiat2007}
G.~Charpiat, R.~Keriven, and O.~Faugeras.
\newblock {Shape Statistics for Image Segmentation with Priors}.
\newblock In {\em {Conference on Computer Vison and Pattern Recognition}},
  2007.

\bibitem{Ebin1970}
D.~G. Ebin and J.~Marsden.
\newblock {Groups of diffeomorphisms and the motion of an incompressible
  fluid.}
\newblock {\em Ann. of Math. (2)}, 92:102--163, 1970.

\bibitem{Escher2012pre}
J.~Escher and B.~Kolev.
\newblock {Right-invariant {S}obolev metrics of fractional order on the
  diffeomorphism group of the circle}.
\newblock {\em \emph{arXiv:1202.5122v2}}, 2012.

\bibitem{Evans2010}
L.~C. Evans.
\newblock {\em {Partial Differential Equations}}.
\newblock Springer, 2nd edition edition, 2010.

\bibitem{Gloeckner06}
H.~Gl{\"o}ckner.
\newblock {Implicit functions from topological vector spaces to {B}anach
  spaces}.
\newblock {\em Israel J. Math.}, 155:205--252, 2006.

\bibitem{Hairer2006}
E.~Hairer, C.~Lubich, and G.~Wanner.
\newblock {\em {Geometric Numerical Integration}}.
\newblock Springer, 2nd edition edition, 2006.

\bibitem{Hiltunen99}
S.~Hiltunen.
\newblock {Implicit functions from locally convex spaces to {B}anach spaces}.
\newblock {\em Studia Math.}, 134(3):235--250, 1999.

\bibitem{Jay1996}
L.~Jay.
\newblock {Symplectic Partitioned {R}unge-{K}utta Methods for Constrained
  {H}amiltonian Systems}.
\newblock {\em SIAM J. Numer. Anal.}, 33(1):368--387, 1996.

\bibitem{Klassen2004}
E.~Klassen, A.~Srivastava, W.~Mio, and S.~Joshi.
\newblock {Analysis of planar shapes using geodesic paths on shape spaces}.
\newblock {\em IEEE Transactions on Pattern Analysis and Machine Intelligence},
  26(3):372--383, 2004.

\bibitem{Michor1997}
A.~Kriegl and P.~W. Michor.
\newblock {\em {The convenient setting of global analysis}}, volume~53 of {\em
  {Mathematical Surveys and Monographs}}.
\newblock American Mathematical Society, Providence, RI, 1997.

\bibitem{Leimkuhler2004}
B.~Leimkuhler and S.~Reich.
\newblock {\em {Simulating {H}amiltonian Dynamics}}.
\newblock Number~14 in {Cambridge Monographs on Applied and Computational
  Mathematics}. Springer, 2004.

\bibitem{Mennucci2008}
A.~Mennucci, A.~Yezzi, and G.~Sundaramoorthi.
\newblock {Properties of {S}obolev-type metrics in the space of curves}.
\newblock {\em Interfaces Free Bound.}, 10(4):423--445, 2008.

\bibitem{Micheli2013}
M.~Micheli, P.~W. Michor, and D.~Mumford.
\newblock {Sobolev Metrics on Diffeomorphism Groups and the Derived Geometry of
  Spaces of Submanifolds}.
\newblock {\em Izvestiya Mathematics}, 77(3):109--136, 2013.

\bibitem{Michor2005}
P.~W. Michor and D.~Mumford.
\newblock {Vanishing geodesic distance on spaces of submanifolds and
  diffeomorphisms}.
\newblock {\em Doc. Math.}, 10:217--245 (electronic), 2005.

\bibitem{Michor2006c}
P.~W. Michor and D.~Mumford.
\newblock {Riemannian geometries on spaces of plane curves}.
\newblock {\em J. Eur. Math. Soc. (JEMS) 8 (2006), 1-48}, 2006.

\bibitem{Michor2007}
P.~W. Michor and D.~Mumford.
\newblock {An overview of the {R}iemannian metrics on spaces of curves using
  the {H}amiltonian approach}.
\newblock {\em Appl. Comput. Harmon. Anal.}, 23(1):74--113, 2007.

\bibitem{Mio2007}
W.~Mio, A.~Srivastava, and S.~Joshi.
\newblock {On Shape of Plane Elastic Curves}.
\newblock {\em Int. J. Comput. Vision}, 73(3):307--324, July 2007.

\bibitem{Misiolek1993}
G.~Misio{\l}ek.
\newblock {Stability of flows of ideal fluids and the geometry of the group of
  diffeomorphisms}.
\newblock {\em Indiana Univ. Math. J.}, 42(1):215--235, 1993.

\bibitem{Shah2008}
J.~Shah.
\newblock {{$H\sp 0$}-type {R}iemannian metrics on the space of planar curves}.
\newblock {\em Quart. Appl. Math.}, 66(1):123--137, 2008.

\bibitem{Shah2010}
J.~Shah.
\newblock {An ${H}^2$ {R}iemannian metric on the space of planar curves modulo
  similitudes}.
\newblock {\em Advances in Applied Mathemtics}, 2013.

\bibitem{Sundaramoorthi2011}
G.~Sundaramoorthi, A.~Mennucci, S.~Soatto, and A.~Yezzi.
\newblock {A new geometric metric in the space of curves, and applications to
  tracking deforming objects by prediction and filtering}.
\newblock {\em SIAM J. Imaging Sci.}, 4(1):109--145, 2011.

\bibitem{Sundaramoorthi2007}
G.~Sundaramoorthi, A.~Yezzi, and A.~Mennucci.
\newblock {Sobolev Active Contours}.
\newblock {\em International Journal of Computer Vision}, 73:345--366, 2007.

\bibitem{Teichmann01}
J.~Teichmann.
\newblock {A {F}robenius theorem on convenient manifolds}.
\newblock {\em Monatsh. Math.}, 134(2):159--167, 2001.

\bibitem{YezziMennucci2005}
A.~Yezzi and A.~Mennucci.
\newblock {Conformal Metrics and True "Gradient Flows" for Curves}.
\newblock In {\em {Proceedings of the Tenth IEEE International Conference on
  Computer Vision}}, volume~1, pages 913--919, Washington, 2005. IEEE Computer
  Society.

\bibitem{Michor2008a}
L.~Younes, P.~W. Michor, J.~Shah, and D.~Mumford.
\newblock {A metric on shape space with explicit geodesics}.
\newblock {\em Atti Accad. Naz. Lincei Cl. Sci. Fis. Mat. Natur. Rend. Lincei
  (9) Mat. Appl.}, 19(1):25--57, 2008.

\end{thebibliography}

\end{document}